\def\@seccntformat#1{\hspace*{0mm}%
 \protect\textup{\protect\@secnumfont
   \ifnum\pdfstrcmp{subsection}{#1}=0 \bfseries\fi
   \csname the#1\endcsname
   \protect\@secnumpunct
     }%
}
\newcommand{\N}{\mathbb{N}}
\newcommand{\Z}{\mathbb{Z}}
\newcommand{\R}{\mathbb{R}}
\newcommand{\dl}{\mathrm{d}}
\newcommand{\sph}{\mathbb{S}}
\newcommand{\sequence}[2]{(#1_{#2})_{#2}}
\newcommand{\norm}[2]{\|#1\|_{#2}}
\newcommand{\mv}[2]{\langle#1 \rangle_{#2}}
\newcommand{\veps}{\varepsilon}
\newcommand{\Om}{\Omega}
\newcommand{\Omzeps}{{\Omega^0_\varepsilon}}
\newcommand{\Omoeps}{{\Omega^1_\varepsilon}}
\newcommand{\weakly}{\rightharpoonup}
\newcommand{\wtwos}{\overset{2s}{\rightharpoonup}}
\newcommand{\stwos}{\overset{2s}{\to}}
\newcommand{\ol}[1]{\overline{#1}}
\newcommand{\per}{\text{per}}
\newcommand{\sfT}{\mathsf{T}}
\DeclareMathOperator{\curl}{curl}
\DeclareMathOperator{\diver}{div}
\newtheorem{Theorem}{Theorem}[section]
\newtheorem{Corollary}[Theorem]{Corollary}
\newtheorem{Proposition}[Theorem]{Proposition}
\newtheorem{Lemma}[Theorem]{Lemma}
\newtheorem*{Conjecture}{Conjecture}
\theoremstyle{definition}
\newtheorem{Definition}[Theorem]{Definition}
\newtheorem{Remark}[Theorem]{Remark}
\newtheorem{Setting}[Theorem]{Convention}
\newtheorem{Assumption}[Theorem]{Assumption}
\newenvironment{theorem}[1][]
{ \begin{Theorem}[#1]}
	{\end{Theorem}}
\newenvironment{definition}[1][]
{ \begin{Definition}[#1]}
	{\end{Definition} }
\newenvironment{corollary}[1][]
{ \begin{Corollary}[#1]}
	{\end{Corollary} }
\newenvironment{proposition}[1][]
{ \begin{Proposition}[#1]}
	{\end{Proposition}}
\newenvironment{lemma}[1][]
{ \begin{Lemma}[#1]}
	{\end{Lemma}}
\newenvironment{remark}[1][]
{ \begin{Remark}[#1]}
	{\end{Remark}}
\numberwithin{equation}{section}
\title[]{Two-scale density of almost smooth functions in sphere-valued Sobolev spaces: A high-contrast extension of the Bethuel-Zheng theory}
\date{\today}
\author[E. Davoli] {Elisa Davoli} 
\author[L. Happ]{Leon Happ}
\date{}
\begin{document}
\begin{abstract}
		In this paper we prove a strong two-scale approximation result for sphere-valued maps in \(L^2(\Om;W^{1,2}_0(Q_0;\sph^2))\), where \(\Om\subset \R^3\) is an open domain and \(Q_0\subset Q\) an open subset of the unit cube \(Q=(0,1)^3\). The proof relies on a generalization of the seminal argument by F. Bethuel and X.M. Zheng to the two-scale setting. We then present an application to a variational problem in  high-contrast micromagnetics.
		
		\medskip
		
		\noindent
		{\it 2020 Mathematics Subject Classification:} 46E35, 35B27, 74Q05

		\smallskip
		\noindent
		{\it Keywords and phrases:}
		Sard's lemma, manifold-valued Sobolev spaces, high-contrast homogenization, two-scale convergence.
		
	\end{abstract}
\maketitle


\section{Introduction}
Approximating manifold-valued Sobolev mappings by smooth functions is a classical question. One of the seminal results in this direction is the theory established by F. Bethuel and X.M. Zheng in \cite{BeZh88}. On the one hand, in the setting of $W^{1,2}(B;\mathbb{S}^2)$ (where $B$ denotes the unit ball in $\mathbb{R}^3$ centered in the origin), smooth approximations in the strong $W^{1,2}$-topology are not possible (cf. Theorem~2 in \cite{BeZh88} and Section~4 in \cite{SchUh83}; see also Theorem~2.3 and Theorem~2.6 in \cite{Ha09}). On the other hand, the authors prove in \cite{BeZh88} that each map in $W^{1,2}(B;\mathbb{S}^2)$ can be approximated by almost smooth maps, namely maps which are smooth in $\mathbb{R}^3$ up to a finite number of points. 

The first contribution of this paper is to investigate whether this latter density result carries over to the setting in which the functions under consideration exhibit both a macroscopic and a microscopic scale, and the strong $W^{1,2}$-topology is replaced by strong two-scale convergence, cf. Section \ref{sec:prelims}. 

To be precise, denoting by $\Omega$ a bounded domain in $\mathbb{R}^3$ and by $Q_0$ an open subset of the unit cube $Q=(0,1)^3$, we consider maps in $L^2(\Omega;W^{1,2}(Q;\mathbb{R}^3))$ exhibiting a second scale depencence localized in $Q_0$. Our main result is the following.
\begin{theorem}\label{thm:strong_ts_approx}
Let $\tilde{m}\in W^{1,2}(\Omega;\sph^2)$ and $w\in L^2(\Omega;W^{1,2}_0(Q_0;\R^3))$ be such that 
\begin{equation*}
    \tilde{m}(x)+w(x,z)\in \sph^2 
    \quad \text{for a.e. }
    (x,z)\in \Omega\times Q_0.
\end{equation*}
Then, there exists a sequence of functions \(\sequence{m}{k}\subset W^{1,2}(\Om;\sph^2)\) such that
    \begin{alignat}{2}
        &m_k 
        \stwos \tilde{m}(x) + w(x,z)\chi^0(z)
        && \quad \text{in } L^2(\Om;\sph^2),\label{eq:first_stwos_approx}\\
        &\veps_k \nabla m_k
        \stwos \nabla_z w(x,z)\chi^0(z)
        && \quad \text{in } L^2(\Om;\R^{3\times 3}).\label{eq:second_stwos_approx}
    \end{alignat}
    Moreover, there holds
    \begin{equation*}
        m_k\in C^\infty(\Om\setminus \{a_{k_1},\dots, a_{k_l}\};\sph^2),
    \end{equation*}
    i.e., the functions \(m_k\) are smooth up to a finite number of points, where the points themselves and the number \(k_l\) depend on the index \(k\in \N\).
\end{theorem}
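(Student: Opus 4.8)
\emph{Proof strategy.} The plan is to run a two-scale version of the Bethuel--Zheng desingularization; the construction is organized by three observations. Since \(|m_k|=|\tilde m(x)+w(x,z)\chi^0(z)|=1\) a.e., the \(L^2\)-norms in \eqref{eq:first_stwos_approx} match automatically, so for \eqref{eq:first_stwos_approx} it suffices to produce \emph{weak} two-scale convergence, whereas \eqref{eq:second_stwos_approx} in addition requires \(\|\varepsilon_k\nabla m_k\|_{L^2(\Om)}^2\to\int_\Om\int_{Q_0}|\nabla_z w|^2\). Moreover, any modification carried out on the slow variable is multiplied by \(\varepsilon_k\) once the fast variable is substituted, hence is harmless for \eqref{eq:second_stwos_approx}. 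Finally, if \(\hat q\in L^2(\Om;W^{1,2}_{\mathrm{per}}(Q;\R^3))\) is \(\sph^2\)-valued and \(C^\infty\) on \((\Om\times Q)\setminus\Sigma\) with \(\Sigma\subset(A\times Q)\cup(\Om\times B)\), \(A\subset\Om\) and \(B\subset Q\) finite, then for each fixed \(\varepsilon>0\) the oscillating map \(x\mapsto\hat q(x,x/\varepsilon)\) is \(\sph^2\)-valued, belongs to \(W^{1,2}(\Om;\sph^2)\), and is smooth away from the finite set obtained by pulling \(\Sigma\) back through \(x\mapsto(x,x/\varepsilon)\) (finite because \(\Om\) is bounded). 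Call such a \(\hat q\) \emph{bi-regular}.

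\textbf{Step 1: reduction to bi-regular data.} I would first show that \(q(x,z):=\tilde m(x)+w(x,z)\chi^0(z)\) can be approximated by bi-regular maps \(\hat q\) in the norm \(\|v\|:=\|v\|_{L^2(\Om\times Q)}+\|\nabla_z v\|_{L^2(\Om\times Q)}\). Regarding \(x\mapsto q(x,\cdot)\) as a strongly measurable map into the separable metric space \(Y:=\{v\in W^{1,2}_{\mathrm{per}}(Q;\sph^2): v\text{ constant on }Q\setminus Q_0\}\), one approximates it in \(L^2(\Om;Y)\), hence in \(\|\cdot\|\), by a simple function \(\sum_{i=1}^{N}\chi_{\Om_i}(x)\,q_i(z)\) with \(q_i\in Y\). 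Next one applies the classical Bethuel--Zheng theorem to each of the finitely many \(q_i\) (extended by its constant value to a slightly larger ball) to obtain \(\sph^2\)-valued \(q_i^\nu\to q_i\) in \(W^{1,2}(Q)\) that are smooth away from finite sets \(B_i\subset Q\). Finally, the jumps of \(\sum_i\chi_{\Om_i}q_i^\nu\) across the interfaces \(\partial\Om_i\) are removed inside thin \emph{macroscopic} transition strips by joining \(q_i^\nu\) to \(q_j^\nu\) along a \(W^{1,2}(Q;\sph^2)\)-continuous path of maps, each smooth outside finitely many (possibly path-dependent) points — such paths can be built by bringing both endpoints to a common constant map through bubbling-type surgeries, which are \(W^{1,2}\)-continuous. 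As the strips lie in the slow variable and can be taken of arbitrarily small total measure, the resulting \(\hat q\) is bi-regular, lies in \(C(\overline\Om;L^2_{\mathrm{per}}(Q;\R^3))\cap L^\infty\) (hence is an admissible two-scale test-limit), and is as close to \(q\) in \(\|\cdot\|\) as we please.

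\textbf{Step 2: oscillation and diagonalisation.} For bi-regular \(\hat q\) I set \(m_k(x):=\hat q(x,x/\varepsilon_k)\); by the above, \(m_k\in W^{1,2}(\Om;\sph^2)\) is smooth away from finitely many points, and admissibility gives \(m_k\stwos\hat q\) strongly in \(L^2(\Om;\sph^2)\). Splitting \(\varepsilon_k\nabla m_k=\varepsilon_k(\nabla_x\hat q)(x,x/\varepsilon_k)+(\nabla_z\hat q)(x,x/\varepsilon_k)\), the first term tends to \(0\) in \(L^2(\Om)\) since \(\nabla_x\hat q\in L^2(\Om\times Q)\) (it is supported in the transition strips) and carries the factor \(\varepsilon_k\), while the second converges strongly two-scale to \(\nabla_z\hat q\), with matching \(L^2\)-norms; hence \(\varepsilon_k\nabla m_k\stwos\nabla_z\hat q\) strongly. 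Choosing, via Step 1, bi-regular \(\hat q^{(j)}\) with \(\|\hat q^{(j)}-q\|\to0\), recalling \(\nabla_z q=\nabla_z w\,\chi^0\), expressing strong two-scale convergence through the periodic unfolding operator, and using the triangle inequality in \(L^2(\Om\times Q)\), a standard diagonalisation \(m_k:=m_k^{(j(k))}\) then delivers \eqref{eq:first_stwos_approx}, \eqref{eq:second_stwos_approx} and the smoothness statement.

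\textbf{Main obstacle.} The crux — the genuine ``two-scale extension'' of the Bethuel--Zheng argument — is Step 1: performing the desingularization jointly in the slow and fast variables while keeping the map \(\sph^2\)-valued and of the prescribed form, and, above all, controlling the homotopy type of the fibre maps \(q_i^\nu\) (the degrees of their point defects) so that they can be interpolated across the macroscopic transition strips into a single globally defined map with only finitely many singularities. A further point to check — ultimately harmless, since it takes place on a set of vanishing measure — is that the mollification in Bethuel--Zheng lets the microstructure spill slightly outside \(Q_0\), so that exact localization in \(Q_0\) is recovered only in the two-scale limit.
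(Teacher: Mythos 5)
Your proposal inverts the order of operations relative to the paper: you desingularize the two-scale limit $q(x,z)=\tilde m(x)+w(x,z)\chi^0(z)$ inside the six-dimensional product $\Omega\times Q$ first, and only then substitute $z=x/\veps_k$; the paper instead first builds smooth but \emph{unconstrained} oscillating maps $v_k=\tilde m_k'+\hat w_k\in C^\infty(\Omega;\R^3)$ (Lemma~\ref{lem:strong_l2}) and afterwards restores the constraint by composing $v_k\colon\Omega\to\R^3$ with a shifted radial projection chosen via Sard's theorem, so that the singular preimages $v_k^{-1}(a_k)$ are genuinely finite because one is dealing with a map between two three-dimensional spaces. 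The order matters, and your Step~1 contains a genuine gap: the ``bi-regular'' structure $\Sigma\subset(A\times Q)\cup(\Omega\times B)$ with $A,B$ finite is not achieved by the construction you describe. On each piece $\Omega_i$ the singular set $\Omega_i\times B_i$ is of the admissible form, but inside the macroscopic transition strips the interpolating path $t\mapsto p_t$ has point singularities $B_t\subset Q$ that move with $t$, so the singular set of the glued map is $\{(x,z):z\in B_{t(x)}\}$ --- a three-dimensional subset of $\Omega\times Q$ which is not contained in any $(A\times Q)\cup(\Omega\times B)$ with $A,B$ finite. Its pullback under $x\mapsto(x,x/\veps_k)$ is the solution set of $x/\veps_k\equiv b(x)\pmod{\Z^3}$, which need not be finite and can a priori even have positive measure; consequently neither the finiteness of the singular set of $m_k$, nor $m_k\in W^{1,2}(\Omega;\sph^2)$, nor the very well-definedness of the substituted map is guaranteed. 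This is exactly the obstruction the authors isolate in the remark following the proof of Theorem~\ref{thm:recov_mag}: singular sets of codimension three in $\Omega\times Q$ need not project to codimension three in $Q$, the resulting integrand fails to be of Carath\'eodory type, and the analogue of \eqref{eq:make_osc} may produce an ill-defined function.

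Two further points reinforce that Step~1, which you yourself flag as the ``main obstacle,'' is not merely technical. First, the homotopy bookkeeping you defer (matching the degrees of the point defects of $q_i^\nu$ and $q_j^\nu$ so that a finite-energy, finitely-singular interpolation exists) is a substantial problem in its own right and is nowhere resolved in the proposal. Second, your admissibility claim is too weak: membership of $\hat q$ in $C(\overline\Omega;L^2_{\mathrm{per}}(Q))\cap L^\infty$ does not make $(\nabla_z\hat q)(x,x/\veps_k)$ a well-defined $L^2$ function with converging norm, since $\nabla_z\hat q$ is only an equivalence class in $z$ for each $x$ and is not continuous in the fast variable near the defects; the standard admissible classes require continuity in $z$ (as in $L^2(\Omega;C_{\mathrm{per}}(Q))$), which is precisely what your fibrewise-singular $\hat q$ lacks. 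The correct observations in your outline (that $|m_k|=1$ reduces \eqref{eq:first_stwos_approx} to weak two-scale convergence, and that slow-variable modifications are damped by the factor $\veps_k$ in \eqref{eq:second_stwos_approx}) do not compensate for these gaps; to repair the argument you would essentially have to switch to the paper's order, namely oscillate first with unconstrained smooth data and project afterwards using the Sard/shifted-projection estimate \eqref{eq:sob_prop}.
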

In comparison with \cite[Theorem~4]{BeZh88}, the constructed approximants converge in a weaker norm but \eqref{eq:second_stwos_approx} provides a finer characterization of the limiting oscillations. The proof methodology stems from a combination of the Sard's lemma application and the projection analysis highlighted in \cite[Theorem~4]{BeZh88} with two-scale convergence and unfolding. Relying on these latter tools, we actually prove a slightly refined version of Theorem \ref{thm:strong_ts_approx} (cf. Theorem \ref{thm:recov_mag}), which serves as starting point of the high-contrast analysis presented in the second part of the paper.

The mathematical modeling of high-contrast composites has been attracting increasing interest over the last couple of years. From a variational point of view, this question can be phrased as identifying the \(\Gamma\)-limit in the sense of \cite{DM93} of energy functionals modeling highly heterogeneous composites, and exhibiting a drastical difference (high contrast) in the material properties of their constituents. This leads, in particular, to a loss of coercivity of the respective energy functionals, so that limiting nonlinear theories are not always available while working with classical weak or strong Sobolev convergence, but often require turning to two-scale adaptations instead (\cite{ChCh12}, \cite{DaKrPa21}, \cite{DaGaPa2}). 

We consider here a high-contrast toy-problem motivated by the theory of micromagnetics (cf. \cite{HuSch08}, \cite{Br63}). In order to describe our results we need to introduce some notation. Let \(\Om\subset \R^3\) be a bounded domain representing a ferromagnetic body. 
Let \(E\subset \R^d\) be an open, connected, and periodic set with Lipschitz boundary. The assumption that \(E\) is periodic in this setting means that
\begin{equation*}
	E+e_i = E
	\quad \text{ for all } i=1,2,3,
\end{equation*}  
with \((e_i)_{i=1,2,3}\) the canonical basis of \(\R^3\). We then define \(Q_1\coloneqq Q\cap E\) and \(Q_0\coloneqq Q\setminus  \overline{Q_1}\). Let further 
\begin{equation}\label{eq:setup1}
	Z_\veps\coloneqq \{z\in \Z^3 : \veps (\overline{Q_0}+z)\subset \Om\}
\end{equation} 
encode those lattice points in \(\Z^3\) such that the sets \(\veps (\overline{Q_0}+z)\) are well-separated from the boundary of \(\Om\). We finally define the two constituents of the composite on the scale \(\veps>0\) as 
\begin{equation}\label{eq:setup3}
\Omzeps
\coloneqq \bigcup_{z\in Z_\veps}  \veps(Q_0+z).
\end{equation}
and
\begin{equation}\label{eq:setup2}
 \Omoeps
\coloneqq \Om\setminus \ol{\Omzeps}.
\end{equation}
This construction amounts to cutting out from some homogeneous sample \(\Om\) holes constituted by all those \(\veps (Q_0+z)\) that are compactly contained in \(\Om\), and then filling them again with a different material than the one that makes up \(\Omoeps\).

The micromagnetic behavior of $\Omega$ is encoded by the functionals
\begin{equation}\label{eq:min_probs}
	\mathcal{G}_\veps(u,m)
	:= \frac{1}{2}\int_{\Omoeps} |\nabla m |^2 \,{\dl x} 
	+\frac{1}{2}\int_{\Omzeps} \veps^2|\nabla m |^2 \, {\dl x}
	+ \int_{\R^3}|h_d[m]|^2\,{\dl x},
	\end{equation}
where \(m\), describing the magnetization of the body, is assumed to satisfy a saturation constraint and belong to the class of functions
\begin{equation*}
W^{1,2}(\Om;\sph^2)
\coloneqq \{m\in W^{1,2}(\Om;\R^3):\ m(x)\in \sph^2 \text{ for a.e. } x\in \Om\}.
\end{equation*}
The first two terms in \eqref{eq:min_probs} describe symmetric exchanges and favour constant magnetizations (with very different strength) in the two portion of the specimen. The third energy contribution, instead, has a nonlocal character, encodying the effects of the stray field. The demagnetizing field \(h_d[m]\) is defined as the image of the linear and continuous operator 
\begin{equation}\label{eq:demag}
    h_d:L^2(\R^3;\R^3)\to L^2(\R^3;\R^3),
\end{equation}
which is uniquely determined (cf. \cite{AlDF15}, \cite{SR07}) through the magneto-static Maxwell equations
\begin{equation}\label{eq:maxwell}
    \begin{cases}
        -\diver h_d[m]= \diver m\chi_\Om,\\
        \nabla \times h_d[m] =0
    \end{cases}		
    \quad \text{in } \R^3.
\end{equation}
In other words, the pair \((m\chi_\Om,h_d[m])\) fulfils
\begin{equation}\label{eq:diff_constraints}
    \mathcal{A}(m\chi_\Om,h_d[m])
    \coloneqq
    \left(\begin{array}{cc}
       \diver  &  \diver\\
        0  &     \curl
    \end{array}\right)
    \left(\begin{array}{cc}
         m\chi_\Om  \\
         h_d[m]
    \end{array}\right)
    =0
\end{equation}
in the distributional sense, i.e., it lies in the kernel of the associated constant rank differential operator \(\mathcal{A}\).
The demagnetizing fields satisfies \(\norm{h_d[m]}{L^2(\R^3)}\le \norm{m}{L^2(\Om)}\le |\Om|\) (the Lebesgue measure of \(\Om\)) for all \(m\in W^{1,2}(\Om;\sph^2)\), and the magneto-static energy in \eqref{eq:min_probs} is equivalently expressed as
\begin{equation*}
    \int_{\R^3}|h_d[m]|^2\,{\dl x}
    =-\int_{\Om}h_d[m]\cdot m\,{\dl x}.
\end{equation*}
We neglect here anisotropy contributions, external fields, as well as antisymmetric exchanges, for they would not modify substantially the analysis, acting as lower order perturbations.

A first difficulty in the identification of the high-contrast limit of $\mathcal{G}_{\veps}$ is due to the presence of the saturation constraint. As already observed in \cite{Pi04}, in the absence of Sobolev coercivity for the admissible magnetizations, a relaxation of the saturation constraint occurs in the energy both for the magnetizations and for the admissible stray fields. 
The non-local nature of the stray-field energy further deeply affects the properties of the \(\Gamma\)-limit of \(\sequence{\mathcal{G}}{\veps}\) in comparison to former results in high-contrast homogenization. In particular, in the setting analyzed here, the energy cannot be split into two separate contributions, depending only from the inclusions, or only from the matrix portion of the composite, respectively. It rather exhibits, in the limiting stray-field behavior, nontrivial coupling effects. As a result, the influence of the $\veps$-inclusions cannot be averaged out in the homogenized energy density: The coupling of the two material components through the non-local energy contribution thus calls for a tracking of the fast variable in the choice of the underlying convergence. 

\begin{definition}[Convergence in the sense of extensions]\label{def:convergence_single}
	Let \((\veps_k)_k\) be an infinitesimal sequence. We say that \(\sequence{f}{k}\subset W^{1,2}(\Om;\R^3)\) converges to \(f\in W^{1,2}(\Om;\R^3)\) \textit{in the sense of extensions}, with respect to the scales \((\veps_k)_k\), if
	\begin{enumerate}[(i)]
		\item
		\(\sequence{f}{k}\) is bounded in \(L^2(\Om;\R^3)\) and
		\item
		there exists a sequence \(\sequence{\tilde{f}}{k}\subset  W^{1,2}(\Om;\R^3)\) such that \(\tilde{f}_k=f_k\) in \(\Om_k^1\) and 
        \begin{equation}
            \tilde{f_k}\weakly f
            \qquad \text{in } W^{1,2}(\Om;\R^3).
        \end{equation}
	\end{enumerate}
\end{definition}

\begin{definition}[High contrast convergence]\label{def:conv_for_gamma}
	Let \(\sequence{\veps}{k}\) be an infinitesimal sequence. Then \((m_k)_k\subset W^{1,2}(\Om;\R^3)\) converges to the tuple \((\tilde{m},w)\) for 
	\begin{equation}
		\tilde{m}\in W^{1,2}(\Om;\R^3)
		\quad\text{and}\quad
            w\in L^2(\Om;W_0^{1,2}(Q_0;\R^3))
	\end{equation}
	 \textit{in the high contrast sense} if 
	\begin{enumerate}[(i)]
		\item
			\(\sequence{m}{k}\) converges to \(\tilde{m}\) {in the sense of extensions},
		\item
			  and
			 \begin{equation}\label{eq:ts_in_hc}
			 	m_k\wtwos \tilde{m}+w
			 	\qquad \text{ in } L^2(\Om;\R^3).
			 \end{equation}
	\end{enumerate}
 We say that \((m_k)_k\subset W^{1,2}(\Om;\R^3)\) converges strongly to \((\tilde{m},w)\) in the high-contrast sense if \(m_k\stwos \tilde{m}+w\) in \eqref{eq:ts_in_hc}.
\end{definition}

Denoting by $Q$ the unit cube, in order to formulate our \(\Gamma\)-limit result we define the mapping (see Lemma~\ref{lem:demag_limit})
    \begin{equation}
        h_d^z: L^2(\Om\times Q)\to L^2(\Om;L^2_\per(Q))
    \end{equation}
    through the identity \(h_d^z[m]\coloneqq \nabla_z r_m\), where \(r_m\in L^2(\Om;W^{1,2}_\per(Q)/\R)\) and where for almost every \(x\in \Om\) the scalar function \(r_m(x,\cdot)\) is the unique solution in \(W^{1,2}_\per(Q)/\R\) of the cell problem
    \begin{equation}
        \int_Q \nabla_z r_m(x,z)\cdot \nabla_z \psi(z)\,{\dl z}
        = - \int_Q m(x,z)\cdot \nabla_z \psi(z)\,{\dl z}
        \quad \forall \psi\in W^{1,2}_\per(Q).
    \end{equation} 
    In particular, there holds
    \begin{equation}\label{eq:diff_constraints_z}
    \mathcal{A}_z(m, h_d^z[m])
    \coloneqq
    \left(\begin{array}{cc}
       \diver_z  &  \diver_z\\
        0  &     \curl_z
    \end{array}\right)
    \left(\begin{array}{cc}
         m \\
         h_d^z[m]
    \end{array}\right)
    =0
    \qquad \text{for a.e. } x\in \Om
\end{equation}
in the distributional sense. We now define the functional
    \begin{equation}\label{eq:gamma-limit}
        \mathcal{G}(\tilde{m},w)
        \coloneqq \mathcal{F}^0(w) 
        + \mathcal{F}^1(\tilde{m}) 
        + \int_{\R^3} \mathcal{W}_{\hom}(\tilde{m},w)\,{\dl x}
    \end{equation}
    with the contributions
    \begin{align}
    	\begin{split}\label{eq:soft_gamma_lim}
    		&\mathcal{F}^0(w)
    		:=
    		\frac{1}{2}\int_{\Om}\int _{Q_0} |\nabla_z w|^2\,{\dl z}{\dl x},
    	\end{split}\\
    	\begin{split}
    	&
    	\mathcal{F}^1(\tilde{m})
    	:=\int_{\Om}\tilde{f}_{\hom}(\tilde{m},\nabla \tilde{m})\ {\dl x},
    	\end{split}
    \end{align}
    and the homogenized self-energy density
    \begin{align}\label{eq:stray_field_limit}
    \mathcal{W}_{\hom}(\tilde{m},w)
    = |h_d[(\tilde{m}+\mv{w(x,\cdot)}{Q_0})]|^2\,{\dl x} 
    + \int_{Q_0} |h_d^z[w(x,z)]|^2\,{\dl z}\chi_\Om.
    \end{align}
    The tangentially homogenized energy density \(\tilde{f}_{\hom}\), cf. \cite{BaMi10}, satisfies the cell formula
    \begin{equation}\label{eq:g_hom_special}
    \tilde{f}_{\hom}(s,\xi)
    = 
    \inf_{\varphi \in W^{1,2}_{\per}(Q;T_s\sph^2)}
    \frac{1}{2}\int_{Q_1} |\xi+\nabla \varphi(z)|^2\, {\dl z}.
    \end{equation}
    It is called "tangentially homogenized" because we consider the infimum over all functions in \(W^{1,2}_{\per}(Q;T_s\sph^2)\) with values in the tangent space \(T_s\sph^2\) of the sphere. We prove the following characterization.

\begin{theorem}\label{thm:main_gamma_conv}
    Let \(\mathcal{G}:W^{1,2}(\Om;\R^3)\times L^2(\Om;W_0^{1,2}(Q_0;\R^3))\to \R\cup \{+\infty\}\) be as in \eqref{eq:gamma-limit}. Then:
    \begin{enumerate}[(i)]
        \item 
            For any \((\tilde{m},w)\in W^{1,2}(\Om;\sph^2)\times L^2(\Om;W_0^{1,2}(Q_0;\R^3))\) and for any sequence \(\sequence{m}{\veps}\subset W^{1,2}(\Om;\sph^2)\) that converges to \((\tilde{m},w)\) in the strong high-contrast sense, there holds
            \begin{equation*}
                \mathcal{G}(\tilde{m},w)
                \le \liminf_{\veps \to 0} \mathcal{G}_\veps(m_\veps).
            \end{equation*}
        \item 
            For any \((\tilde{m},w)\in W^{1,2}(\Om;\sph^2)\times L^2(\Om;W_0^{1,2}(Q_0;\R^3))\) there exists a sequence \(\sequence{m}{\veps}\subset W^{1,2}(\Om;\sph^2)\) that converges to \((\tilde{m},w)\) in the strong high-contrast sense and that satisfies
            \begin{equation*}
                \limsup_{\veps \to 0} \mathcal{G}_\veps(m_\veps)
                \le \mathcal{G}(\tilde{m},w).
            \end{equation*}
    \end{enumerate}
\end{theorem}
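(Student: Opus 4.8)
The plan is to exploit the additive structure of the candidate $\Gamma$-limit \eqref{eq:gamma-limit}: the rescaled exchange energy on the inclusions $\Omega^0_\veps$ will produce $\mathcal{F}^0(w)$, the exchange energy on the matrix $\Omega^1_\veps$ will produce the tangentially homogenized term $\mathcal{F}^1(\tilde{m})$, and the nonlocal magnetostatic contribution will produce $\int_{\R^3}\mathcal{W}_{\hom}(\tilde{m},w)$. The basic tools are two-scale convergence and periodic unfolding, together with the two-scale identification of the demagnetizing field in Lemma~\ref{lem:demag_limit}; for the $\Gamma$-limsup we shall additionally start from the almost-smooth approximants provided by Theorem~\ref{thm:recov_mag}. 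Part~(i) will be obtained by two-scale lower semicontinuity, part~(ii) by an explicit construction on a dense class of data combined with a diagonal argument that, in turn, uses part~(i).

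\emph{Part (i): lower bound.} Let $\sequence{m}{\veps}$ converge to $(\tilde{m},w)$ in the strong high-contrast sense; we may assume $\liminf_\veps\mathcal{G}_\veps(m_\veps)<+\infty$ and pass to a subsequence attaining the liminf. From the bound on $\veps^2\int_{\Omega^0_\veps}|\nabla m_\veps|^2$, the field $\veps\nabla m_\veps$ restricted to $\Omega^0_\veps$ is bounded in $L^2$; testing its weak two-scale limit against functions compactly supported in $\Om\times Q_0$ and using the strong two-scale convergence $m_\veps\stwos\tilde{m}+w$ together with $w(x,\cdot)\in W^{1,2}_0(Q_0)$ identifies that limit as $\nabla_z w\,\chi^0$. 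Weak two-scale lower semicontinuity of the $L^2$-norm then gives $\liminf\tfrac12\int_{\Omega^0_\veps}\veps^2|\nabla m_\veps|^2\ge\mathcal{F}^0(w)$. For the matrix term, $\nabla m_\veps\chi_{\Omega^1_\veps}$ is bounded in $L^2$ and --- since $m_\veps$ coincides on $\Omega^1_\veps$ with an extension $\tilde{m}_\veps\weakly\tilde{m}$ in $W^{1,2}(\Om;\R^3)$ --- its two-scale limit is of the form $\nabla\tilde{m}(x)+\nabla_z\varphi(x,z)$ with $\varphi(x,\cdot)\in W^{1,2}_{\per}(Q)/\R$. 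Differentiating $|m_\veps|^2\equiv1$ on $\Omega^1_\veps$ and passing to the two-scale limit (a strong--weak product, using $\tilde{m}_\veps\to\tilde{m}$ strongly in $L^2$) yields $\tilde{m}(x)\cdot\nabla_z\varphi(x,z)=0$ a.e.\ on $\Om\times Q_1$, so $\varphi$ may be taken with values in $T_{\tilde{m}(x)}\sph^2$ on $Q_1$; hence the cell formula \eqref{eq:g_hom_special} gives $\liminf\tfrac12\int_{\Omega^1_\veps}|\nabla m_\veps|^2\ge\int_\Om\tilde{f}_{\hom}(\tilde{m},\nabla\tilde{m})=\mathcal{F}^1(\tilde{m})$. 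Finally, since the oscillatory correctors are $o(1)$ in $L^2$ and $h_d$ is linear and continuous, $m_\veps\stwos\tilde{m}+w$ gives through Lemma~\ref{lem:demag_limit} that $\int_{\R^3}|h_d[m_\veps]|^2\to\int_{\R^3}\mathcal{W}_{\hom}(\tilde{m},w)$, with the macroscopic response driven by $\tilde{m}+\mv{w(x,\cdot)}{Q_0}$, the oscillatory one by $h_d^z[w]$, and the cross term vanishing since $h_d^z[w]=\nabla_z r_w$ has zero mean over $Q$. Summing the three contributions yields part~(i).

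\emph{Part (ii): upper bound.} We may assume $\mathcal{G}(\tilde{m},w)<+\infty$ and, since a strong two-scale limit of $\sph^2$-valued maps is $\sph^2$-valued, that $\tilde{m}(x)+w(x,z)\in\sph^2$. By Theorem~\ref{thm:recov_mag} and a further diagonalization --- and using the lower semicontinuity of $\mathcal{G}$ along strong high-contrast convergence furnished by part~(i) --- it suffices to treat data for which $\tilde{m}$ is smooth away from finitely many points, $w$ is smooth and compactly supported in $\Om\times Q_0$, and $\tilde{m}(x)+w(x,z)=\exp_{\tilde{m}(x)}(v(x,z))$ for a smooth $v(x,z)\in T_{\tilde{m}(x)}\sph^2$ vanishing off $\supp w$. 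Fixing $\delta>0$, pick a $\delta$-almost optimal, $Q$-periodic corrector $\varphi(x,\cdot)\in W^{1,2}_{\per}(Q;T_{\tilde{m}(x)}\sph^2)$ in \eqref{eq:g_hom_special}, cut it off near the singular points of $\tilde{m}$ (over which the energies of $\tilde{m}$ and of $\tilde{f}_{\hom}(\tilde{m},\nabla\tilde{m})$ are negligible), and set
\[
m_\veps(x):=\exp_{\tilde{m}(x)}\!\Big(\veps\,\varphi\big(x,\tfrac{x}{\veps}\big)+v\big(x,\tfrac{x}{\veps}\big)\Big),
\]
which lies in $W^{1,2}(\Om;\sph^2)$, is smooth up to finitely many points, and needs no gluing across $\partial\Omega^0_\veps$ (where $v=0$ since $w$ is compactly supported in $Q_0$, so $m_\veps=\exp_{\tilde{m}}(\veps\varphi(\cdot,\cdot/\veps))$ there). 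One then verifies that $m_\veps\stwos\tilde{m}+w\chi^0$ strongly (as $\veps\varphi\to0$ and $v(\cdot,\cdot/\veps)\stwos v$), that the extension $\exp_{\tilde{m}}(\veps\varphi(\cdot,\cdot/\veps))$ of $m_\veps|_{\Omega^1_\veps}$ converges weakly to $\tilde{m}$ in $W^{1,2}(\Om;\R^3)$ (i.e.\ convergence in the sense of extensions), that $\tfrac12\int_{\Omega^1_\veps}|\nabla m_\veps|^2\to\tfrac12\int_\Om\int_{Q_1}|\nabla\tilde{m}+\nabla_z\varphi|^2\le\mathcal{F}^1(\tilde{m})+\delta$ (the terms $\veps\nabla_x\varphi$ and $D\exp-\Id$ being $O(\veps)$, which is where smoothness of $\tilde{m}$ is used), that $\veps^2|\nabla m_\veps|^2$ unfolds on $Q_0$ to $|D\exp_{\tilde{m}}(v)\,\nabla_z v|^2=|\nabla_z w|^2$ so that $\tfrac12\int_{\Omega^0_\veps}\veps^2|\nabla m_\veps|^2\to\mathcal{F}^0(w)$, and that $\int_{\R^3}|h_d[m_\veps]|^2\to\int_{\R^3}\mathcal{W}_{\hom}(\tilde{m},w)$ by Lemma~\ref{lem:demag_limit}. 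Letting $\delta\to0$ and undoing the density reductions proves part~(ii).

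\emph{Main obstacle.} The crux is the interplay between homogenization and the sphere constraint. In part~(i) it is the identification of the unfolded corrector as $\sph^2$-tangential, which is what makes the cell formula \eqref{eq:g_hom_special} sharp; in part~(ii) it is the need to superpose the oscillating corrector and the microscopic field $w$ on $\tilde{m}$ while staying exactly on $\sph^2$ and matching continuously across the interfaces $\partial\Omega^0_\veps$, which we resolve by composing with the exponential map and by first reducing, via the Bethuel--Zheng-type Theorem~\ref{thm:recov_mag}, to almost-smooth data --- the possible wrapping of $w(x,\cdot)$ around $\sph^2$ and the singularities of $\tilde{m}$ being precisely what that reduction absorbs. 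A second, more technical, difficulty is the rigorous decoupling of the macroscopic and oscillatory parts of the nonlocal stray-field energy, which is the content of Lemma~\ref{lem:demag_limit}.
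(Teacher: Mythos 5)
Your part (i) is essentially the paper's argument: splitting of the local energy, two-scale lower semicontinuity for the inclusion term, tangential homogenization for the matrix term, and Lemma~\ref{lem:demag_limit} plus the mean-zero orthogonality of $h_d^z[w]=\nabla_z r_w$ for the stray field. That part is sound (modulo routine care with the non-periodicity of $\chi^1_k$ near $\partial\Om$, which the paper handles by comparing $\Om^1_k$ with $(\veps_k E)\cap\Om$). The genuine gap is in part (ii), in the reduction to data satisfying $\tilde{m}(x)+w(x,z)=\exp_{\tilde{m}(x)}(v(x,z))$ for a smooth tangent field $v$. First, you attribute this reduction to Theorem~\ref{thm:recov_mag}, but that theorem does not approximate the limit pair $(\tilde{m},w)$ by ``nicer'' admissible pairs: it produces a sequence of $\sph^2$-valued maps $m_k$ on $\Om$ converging strongly two-scale to $\tilde{m}+w\chi^0$, i.e.\ it already \emph{is} the recovery construction for the local energies, not a density statement on the data space. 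Second, and more seriously, the exponential lifting you posit does not exist in general, even for data that are smooth off finitely many points: $\exp_{\tilde{m}(x)}^{-1}$ is singular at the antipode $-\tilde{m}(x)$, and for fixed $x$ the $\sph^2$-valued map $z\mapsto\tilde{m}(x)+w(x,z)$ on the three-dimensional set $Q_0$ may hit $-\tilde{m}(x)$ and may carry nonzero degree on small spheres around its singular points, in which case no single-valued tangential lift exists on any neighbourhood. This topological obstruction is precisely the Bethuel--Zheng phenomenon the paper is built to circumvent; assuming it away on a ``dense class'' begs the question, and you give no argument that liftable data are dense in a topology in which $\mathcal{G}$ is continuous. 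Note also that part (i) gives lower semicontinuity of $\mathcal{G}$ along sequences $m_\veps$ converging in the high-contrast sense, not along approximations of the limit data, so it does not by itself furnish the diagonalization you invoke.

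The paper avoids the lifting altogether: it forms $v_k=\tilde{m}'_k+\veps_k\psi^\delta_m(\cdot,\cdot/\veps_k)+\hat{w}_k$ in the ambient space $\R^3$ (Lemma~\ref{lem:strong_l2} for $\hat{w}_k$, Corollary~\ref{col:stiff_approx_recov} for the corrector $\psi^\delta_m$), and restores the saturation constraint by composing with the radial projection shifted by a Sard-generic point $a_k$, accepting finitely many point singularities and controlling the bad set $V_{k,\delta}$ through the vanishing measure of $A_{k,\delta}$. The three strong two-scale convergences \eqref{eq:first_stwos}--\eqref{eq:third_stwos} then allow passage to the limit in $\mathcal{F}^0_\veps$, $\mathcal{F}^1_\veps$ (via Corollary~\ref{col:stiff_approx_recov}\,(ii)) and the magnetostatic term (via the representation $-\int_\Om h_d[m_k]\cdot m_k\,{\dl x}$ and a weak--strong two-scale pairing). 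To repair your argument you would have to replace the exponential map by this ambient-space-plus-shifted-projection device; as written, your recovery sequence is not well defined for general admissible $(\tilde{m},w)$.
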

We stress that the infimum problem in \eqref{eq:g_hom_special} admits a unique minimizer. Define \(\varphi\in W^{1,2}_\per(Q;\R^3)\) as the unique solution to the Poisson equation on the unit cell (where we write \(W^{-1,2}_\per(Q;\R^3)\) for the dual space of \(W^{1,2}_\per(Q;\R^3)\))
    \begin{equation*}
        -\Delta \varphi = \chi_{Q_1}
        \quad \text{in } W^{-1,2}_\per(Q;\R^3).
    \end{equation*}
     One can show, using the Lax-Milgram theorem (cf. \cite[Proposition~2.1]{DaDF20}) that if \((\tau_1(s), \tau_2(s))\) constitute an orthonormal basis of \(T_s\sph^2\), the unique minimizer of \eqref{eq:g_hom_special} can be written as
     \begin{equation*}
         \phi[s,\xi]
         \coloneqq [\varphi\cdot(\xi\tau_1(s))] \tau_1(s)
         + [\varphi\cdot(\xi\tau_2(s))] \tau_2(s).
     \end{equation*}
    The corresponding minimal energy is given by
    \begin{equation*}
        \tilde{f}_{\hom}(s,\xi)
        =\frac{1}{2}|Q_1||\xi|^2-\int_{Q_1} \nabla_z \phi[s,\xi](z)\,{\dl z}.
    \end{equation*}

 We believe that the proof of Theorem~\ref{thm:main_gamma_conv} and especially the construction of the corresponding recovery sequence provide interesting insight into the hurdles involved in nonlocal high-contrast problems in manifold-valued Sobolev spaces. 
 As already mentioned, the entanglement of the magnetization on \(\Omzeps\) and \(\Omoeps\) prohibits us to reduce the study of the full energies in Theorem~\ref{thm:main_gamma_conv}  to the analysis of the isolated contributions from these two different subsets. In fact, we can still follows this path for the difference
 \begin{equation*}
    \mathcal{F}_\veps(u,m)
    := \mathcal{G}_\veps(u,m) 
    - \int_{\R^3}\mathcal{W}(m)\,{\dl x},
\end{equation*}
where $\int_{\R^3}\mathcal{W}(m)\,{\dl x}:=\int_{\R^3}|h_d[m]|^2\,{\dl x}$,
but additional work is required when putting the full energy back together again. Especially the (approximate) recovery sequence has to account for the different effects of the local and non-local parts of the energy. This is where we rely on the result of Theorem~\ref{thm:strong_ts_approx} (to be precise, we will use a slight generalization that we provide in Theorem~\ref{thm:recov_mag}). The main advantage in separating from the energies \(\sequence{\mathcal{F}}{\veps}\) the exchange contributions on \(\Omoeps\) is given by the fact that this allows to trace back its limit behavior to a \(\Gamma\)-limit in the sense of classical homogenization (see Proposition~\ref{thm:stiff_hom}). In our case this is made possible through a special extension operator developed in \cite{GaHaPa23} that preserves the norm constraint. We recall this result in Section~\ref{subsec:extensions}.

We conclude this introduction with a final comment about the notion of convergence in Theorem \ref{thm:main_gamma_conv}. The identification of strong two-scale $\Gamma$-limits in the high-contrast framework dates back to \cite{ChCh12}.
In general, in our setting, we merely know that the functionals \(\sequence{\mathcal{G}}{\veps}\) in \eqref{eq:min_probs} are equi-coercive with respect to the weak high-contrast convergence (by Lemma~\ref{lem:ts_key_lemma}) and a priori not the strong one. Since \(W^{1,2}(\Om;\sph^2)\) is closed under weak \(W^{1,2}\)-convergence, we always know that \(\tilde{m}\in \sph^2\) almost everywhere. However, the same is in general not true for the sum \(\tilde{m}+w\). This is paramount to the fact that \((m_k)_k\) cannot converge strongly to \(\tilde{m}+w\) in \(L^2(\Om;\R^3)\). 
    
    This is due to possible oscillation phenomena, not occurring within  the different cells of scale \(\veps\), but rather between these cells. To illustrate this phenomenon, consider any arbitrary non-zero smooth function \(\phi\in C^\infty(Q;\sph^2)\) that is equal to a constant vector \(s\in \sph^2\) outside a compact set \(K\subset Q_0\subset Q\). Then tessellate the domain \(\Om\) by cubes of size \(\veps Q\) with alternating copies of \(\phi\) and \(-\phi\). For \(\veps\to 0\) the resulting magnetic field will weakly two-scale converge to the average (cf. \cite[Chapter~2.3]{CioDo99})
    \begin{equation*}
        m(x,z)\coloneqq
        \begin{cases}
            0\quad &\text{for } z\in K,\\
            s\quad &\text{else},
        \end{cases}
    \end{equation*}
    which can also be written
    \begin{equation*}
        m(x,z)=\tilde{m}(x) + w(x,z)
        \qquad \text{for}\quad 
        \tilde{m}(x) \coloneqq s,
        \quad w(x,z)\coloneqq 
        \begin{cases}
            -s\quad &\text{for } z\in K,\\
            0\quad &\text{else}.
        \end{cases}
    \end{equation*}
    In particular, \(\tilde{m}\in \sph^2\) in \(\Om\), but it is not true that \(m=\tilde{m}+w\in \sph^2\) almost everywhere in \(\Om\). 
This latter remark suggests that a further relaxation of the energy needs to be undertaken when identifying the $\Gamma$-limit of the functionals $\mathcal{G}_{\veps}$ in the weak high-contrast sense, in the spirit of the analysis performed in \cite{Pi04}. However, this question is beyond the analysis in this paper and will be the subject of a forthcoming contribution.

The paper is organized as follows. In Section~\ref{sec:prelims} we fix some notation, collect the essential properties of two-scale convergence, describe the precise geometries for which the \(\Gamma\)-limit result in Theorem~\ref{thm:main_gamma_conv} holds, and recall a special extension operator that preserves the norm constraint, and which will be used in the proof of Theorem~\ref{thm:main_gamma_conv}. In Section~\ref{sec:main_approx_proof} we prove the approximation Theorem~\ref{thm:strong_ts_approx} in a more general way, already paving the way to apply it in the proof of Theorem~\ref{thm:main_gamma_conv}. Section~\ref{sec:main_gamma_proof} is devoted to the proof of Theorem~\ref{thm:main_gamma_conv}. 

\section{Preliminary results}\label{sec:prelims}
Throughout this paper we denote by \(\Om\subset \R^3\) an open and bounded set with Lipschitz boundary, and by \(Q:= (0,1)^3\subset \R^3\) the reference unit cell. 
For \(p\in (1,\infty)\) we denote by \(q\coloneqq\frac{p}{p-1}\) the dual exponent of \(p\). As usual, we set \(q\coloneqq\infty\) for \(p=1\).  
We use classical notation for Lebesgue and Sobolev spaces. We write
$W^{1,2}(\Omega)/\R$ to highlight the fact that a specific map is defined up to a real constant.
We will denote with the subscript \(\per\) functions on the unit cube \(Q\) that have periodic trace on the sides of \(Q\). The space \(W^{1,2}_{\per}(Q;\R^3)\) coincides with the closure of \(C^\infty_\per (Q;\R^3)\) in the \(W^{1,2}\)-norm (see for example \cite{CioDo99} for classical properties of periodic Sobolev functions). The characteristic functions of \(\Omzeps\) and \(\Omoeps\), equal to one on the respective domain and vanishing outside of them, will be denoted by 
\begin{equation}\label{eq:def_char}
	\chi_\veps^i(x)= 
	\begin{cases}
	1\quad &\text{if } x\in \Om_\veps^i,\\
	0 \quad & \text{else},
	\end{cases}
	\qquad \text{ for } i\in\{0,1\}.
\end{equation}
Note that due to definition \eqref{eq:setup2} such characteristic functions are in general not just restrictions of periodic functions to \(\Om\). In the case that we work with a fixed infinitesimal sequence \((\veps_k)_k\), we will refer to the associated characteristic functions by \(\chi_k^i:=\chi_{\veps_k}^i\). In the same style we will denote by \(\chi^0\) and \(\chi^1\) the characteristic functions of \(Q_0\) and \(Q_1\).

\subsection{Two-scale convergence}\label{sec:ts}
We recall here some classical results on two-scale convergence. The reader familiar with this topic might skip most of this subsection and proceed directly to Lemma \ref{lem:ts_key_lemma}. 

Originally introduced in \cite{Ng89} and further developed in \cite{Al92}, two-scale convergence is a notion of convergence that is able to capture fine-scale periodic oscillations and preserve the information about the oscillations in an additionally emerging micro-scale variable. It naturally allows to pass to the limit in special integral expressions involving products of rapidly oscillating functions. 
\begin{definition}[Weak two-scale convergence (\cite{Ng89,Al92,LNW02})]\label{def:weak_ts}
    Let \(p\in (1,\infty)\). We say that a bounded sequence of functions \(\sequence{u}{\veps}\subset L^p(\Om;\R^n)\) {\it weakly two-scale converges} in \(L^p(\Om;\R^n)\) to some limit \(u\in L^p(\Om\times Q;\R^n)\) if
		\begin{equation}\label{eq:def_ts}
			\int_\Om u_{\veps}(x)\cdot \psi\left(x,\frac{x}{{\veps_n}}\right)\, {\dl x}
			\to \int_\Om\int_Q u(x,z)\cdot\psi(x,z)\, {\dl z}{\dl x}
			\quad \forall \psi\in L^q(\Om;C_{per}(Q;\R^n)).
		\end{equation}
    We write \(u_\veps\wtwos u\).
\end{definition}
The reason why \(L^q(\Om;C_{per}(Q;\R^n))\) qualifies as the space of test functions in \eqref{eq:def_ts} is twofold. On the one hand, all functions \(\psi\) in this space are of {Carathéodory} type and thus \(\psi(x,x/\veps)\) is again a well-defined measurable function in \(L^2(\Om;\R^n)\); on the other hand, these functions are dense in \(L^q(\Om\times Q;\R^n)\) and {strongly two-scale} converge in the following sense. 
\begin{definition}[Strong two-scale convergence]\label{def:strong_ts}
    Let \(p\in (1,\infty)\) and \(\sequence{u}{\veps}\subset L^p(\Om;\R^n)\) be bounded. Then \(\sequence{u}{\veps}\) {\it strongly two-scale converges} in \(L^p(\Om;\R^n)\) to some \(u\in L^p(\Om\times Q;\R^n)\), and we write \(u_\veps\stwos u\), if it converges weakly two-scale to \(u\) and additionally there holds
    \begin{equation}
        \norm{u_\veps}{L^p(\Om;\R^n)}
        \to \norm{u}{L^p(\Om\times Q;\R^n)}.
    \end{equation}
\end{definition}
A connection between two-scale convergence and $L^p$-convergence is provided by the so-called {unfolding operator}.
\begin{definition}[Unfolding operator (\cite{CDG02, Vi04, Vi06})]\label{def:infold_op}
    For \(p\in [1,\infty]\) and \(\veps>0\) the {\it unfolding operator} 
    \begin{equation}
        S_\veps: L^p(\Om;\R^n)\to L^p(\R^3\times Q;\R^n)
\end{equation}
is defined as
\begin{equation}\label{eq:def_unfold_op}
	S_\veps(u)(x,z)
	:= \hat{u}\left(\veps\bigg\lfloor \frac{x}{\veps}\bigg\rfloor +\veps z\right),
\end{equation}
where \(\hat{u}\) denotes the extension of \(u\) to \(\R^3\setminus \Om\) by \(0\). 
\end{definition}
It follows immediately from the definition of the unfolding operator that \(S_\veps(u+ v)=S_\veps(u)+ S_\veps( v)\) and \(S_\veps(u\cdot w)=S_\veps(u)\cdot S_\veps(w)\).
A direct computation shows that \(S_\veps\) is a (non-surjective) linear isometry \(L^p(\Om;\R^n)\to L^p(\R^n\times Q;\R^n)\) for any \(p\in [1,\infty]\) (cf. \cite[Lemma 1.1]{Vi06}). In particular, for \(p\in [1,\infty)\) there holds
\begin{equation}\label{eq:isometry}
    \int_{\Om} |u(x)|^p\,{\dl x}
    = \int_{\Om}\int_Q |S_{\veps}(u)(x,z)|^p\,{\dl z}{\dl x}.
\end{equation}
Note that for any function \(\psi\in L^q(\Om;C_\per(Q;\R^n))\) and for \(\psi_\veps(x)\coloneqq \psi(x,x/\veps)\) we have by the periodicity in the second variable that
\begin{equation}\label{eq:special_unfold}
	S_\veps \left(\psi_\veps\right)(x,z)
	= \psi\left(\veps\bigg\lfloor \frac{x}{\veps}\bigg\rfloor+\veps z, \bigg\lfloor \frac{x}{\veps}\bigg\rfloor+z\right)
	= \psi\left(\veps\bigg\lfloor \frac{x}{\veps}\bigg\rfloor+\veps z,z\right).
\end{equation}
 Furthermore, the unfolding operator enjoys the nice property 
\begin{equation}
\label{eq:grad-unf}
    S_\veps(\veps \nabla u)
    = \nabla_z S_\veps(u)
    \qquad \forall u\in W^{1,p}(\Om;\R^n).
\end{equation}
The crucial insight connecting two-scale convergence with the unfolding operator is provided by the following result (cf. \cite[Proposition 2.5, Proposition 2.7]{Vi06}). 
\begin{proposition}\label{prop:wts_unfold}
     Let \(p\in (1,\infty)\). A bounded sequence \(\sequence{u}{\veps}\subset L^p(\Om;\R^n)\) weakly two-scale converges in \(L^p(\Om;\R^n)\) to \(u\in L^p(\Om\times Q;\R^n)\) iff
     \begin{equation}
         S_\veps(u_\veps) \weakly u 
         \quad \text{in } L^p(\R^3\times Q;\R^n).
     \end{equation}
     Analogously, \(\sequence{u}{\veps}\) strongly two-scale converges in \(L^p(\Om;\R^n)\) to \(u\) iff
     \begin{equation}
         S_\veps(u_\veps) \to u 
         \quad \text{in } L^p(\R^3\times Q;\R^n).
     \end{equation}
\end{proposition}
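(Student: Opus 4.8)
The plan is to exploit three structural properties of the unfolding operator already recorded above: that $S_\veps$ is a linear isometry of $L^r(\Om;\R^n)$ into $L^r(\R^3\times Q;\R^n)$ for every $r\in[1,\infty)$ (in particular \eqref{eq:isometry}), that it is multiplicative, $S_\veps(u\cdot w)=S_\veps(u)\cdot S_\veps(w)$, and that on oscillating test functions it is given by the explicit formula \eqref{eq:special_unfold}. The keystone I would prove first is the auxiliary fact that, writing $\psi_\veps(x):=\psi(x,x/\veps)$,
\[
S_\veps(\psi_\veps)\to \psi \quad\text{strongly in } L^q(\R^3\times Q;\R^n)\qquad\text{for every } \psi\in L^q(\Om;C_\per(Q;\R^n)).
\]
For $\psi$ jointly continuous on $\overline\Om\times Q$ this is immediate from \eqref{eq:special_unfold}: the argument $\veps\lfloor x/\veps\rfloor+\veps z$ converges to $x$ uniformly in $(x,z)$, $\psi$ is uniformly continuous, so the integrand converges uniformly and dominated convergence applies. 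For general $\psi$ I would approximate it in the $L^q(\Om;C_\per(Q;\R^n))$-norm by such continuous functions $\psi_\delta$; since $|(\psi-\psi_\delta)_\veps(x)|\le\|(\psi-\psi_\delta)(x,\cdot)\|_{C_\per(Q)}$ pointwise, the isometry property yields $\|S_\veps((\psi-\psi_\delta)_\veps)\|_{L^q(\R^3\times Q)}=\|(\psi-\psi_\delta)_\veps\|_{L^q(\Om)}\le\|\psi-\psi_\delta\|_{L^q(\Om;C_\per(Q))}$, and a triangle inequality across $\psi_\veps$, $(\psi_\delta)_\veps$, $\psi_\delta$, $\psi$ closes the argument.

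With this lemma in hand, the weak equivalence follows. Using multiplicativity together with the identity $\int_\Om f\,{\dl x}=\int_{\R^3}\int_Q S_\veps(f)\,{\dl z}{\dl x}$ (a direct consequence of \eqref{eq:def_unfold_op}), one has for any $\psi\in L^q(\Om;C_\per(Q;\R^n))$ the exact relation $\int_\Om u_\veps\cdot\psi_\veps\,{\dl x}=\int_{\R^3}\int_Q S_\veps(u_\veps)\cdot S_\veps(\psi_\veps)\,{\dl z}{\dl x}$. If $S_\veps(u_\veps)\weakly u$ in $L^p(\R^3\times Q;\R^n)$, passing to the limit by pairing this weak convergence against the strong convergence of $S_\veps(\psi_\veps)$ gives precisely \eqref{eq:def_ts}, so $u_\veps\wtwos u$. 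Conversely, if $u_\veps\wtwos u$, then by \eqref{eq:isometry} the sequence $(S_\veps(u_\veps))_\veps$ is bounded in the reflexive space $L^p(\R^3\times Q;\R^n)$; any weak cluster point $v$ satisfies, by the same identity, $\int_{\R^3}\int_Q v\cdot\psi=\int_\Om\int_Q u\cdot\psi$ for every admissible $\psi$. Since $S_\veps(u_\veps)$ is supported, in the slow variable, in a $\veps$-neighbourhood of $\Om$, the limit $v$ vanishes on $(\R^3\setminus\overline\Om)\times Q$; as $|\partial\Om|=0$ and $L^q(\Om;C_\per(Q;\R^n))$ is dense in $L^q(\Om\times Q;\R^n)$, this forces $v=u$ (extended by zero). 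Hence every subsequence admits a further subsequence along which $S_\veps(u_\veps)\weakly u$, and the full sequence converges weakly to $u$.

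The strong equivalence is then a soft consequence. By \eqref{eq:isometry} one has $\|u_\veps\|_{L^p(\Om;\R^n)}=\|S_\veps(u_\veps)\|_{L^p(\R^3\times Q;\R^n)}$, so strong two-scale convergence of $u_\veps$ to $u$ means exactly that $S_\veps(u_\veps)\weakly u$ together with $\|S_\veps(u_\veps)\|_{L^p}\to\|u\|_{L^p}$. Since $L^p(\R^3\times Q;\R^n)$ is uniformly convex for $p\in(1,\infty)$, the Radon--Riesz property makes this equivalent to $S_\veps(u_\veps)\to u$ strongly in $L^p(\R^3\times Q;\R^n)$, which is the assertion. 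I expect the only genuine work to be the keystone lemma on $S_\veps(\psi_\veps)$ --- in particular the reduction from merely Carath\'eodory test functions to continuous ones --- while the cluster-point bookkeeping, the support remark, and the uniform-convexity step are routine.
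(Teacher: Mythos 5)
Your proof is correct. The paper does not prove this proposition itself but simply cites \cite{Vi06}, and your argument --- the exact identity $\int_\Om u_\veps\cdot\psi_\veps\,{\dl x}=\int_{\R^3}\int_Q S_\veps(u_\veps)\cdot S_\veps(\psi_\veps)\,{\dl z}{\dl x}$ obtained from the isometry and multiplicativity of $S_\veps$, the strong $L^q$-convergence $S_\veps(\psi_\veps)\to\psi$ for Carath\'eodory test functions via uniform continuity plus density, the cluster-point/support argument for the converse, and Radon--Riesz in the uniformly convex space $L^p$ for the strong statement --- is precisely the standard proof given in that reference, with the only delicate point (the reduction to continuous test functions and the boundary cells, which you control by the measure-zero boundary layer and the pointwise bound $|(\psi-\psi_\delta)_\veps(x)|\le\|(\psi-\psi_\delta)(x,\cdot)\|_{C_\per(Q)}$) handled correctly.
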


Proposition \ref{prop:wts_unfold} can be taken as a motivation to extend two-scale convergence to the limit cases \(p\in \{1,\infty\}\).
\begin{definition}\label{def:twos_p_one}
    Let \(p\in \{1,\infty\}\). A bounded sequence \(\sequence{u}{\veps}\subset L^p(\Om;\R^n)\) converges weakly (weakly star for \(p=\infty\)), respectively strongly, in \(L^p(\Om;\R^n)\)  to some limit \(u\in L^p(\Om\times Q;\R^n)\) if \((S_\veps(u_\veps)_\veps)_\veps\) converges weakly (weakly star), respectively strongly, to \(u\) in \(L^p(\R^3\times Q;\R^n)\).
\end{definition}

From Proposition \ref{prop:wts_unfold} and Definition \ref{def:twos_p_one} together with \eqref{eq:isometry} and \eqref{eq:special_unfold} one immediately gets that every purely oscillating function converges strongly two-scale. To be precise, for \(p\in [1,\infty]\) and every \(u\in L^p_\per(Q;\R^n)\) there holds \(u(x/\veps)\stwos u(z)\) in \(L^p(\Om;\R^n)\). 
With similar arguments one can see that for \(i\in \{0,1\}\) also the characteristic functions \(\chi_\veps^i=\chi_{\Om_\veps^i}\) defined in \eqref{eq:def_char} fulfil
\begin{equation}\label{eq:stwos_char}
    \chi_\veps^i(x)
    \stwos \chi^i(z)= \chi_{Q_i}(z)
    \qquad \text{in } L^p(\Om;\R)
\end{equation}
for all \(p\in [1,\infty)\). 

Eventually, Proposition \ref{prop:wts_unfold} also implies that for two sequences \(\sequence{u}{\veps}\subset L^p(\Om;\R^n),\sequence{v}{\veps}\subset L^q(\Om;\R^n)\) with \(u_\veps\wtwos u\) and \(v_\veps
\stwos v\) there holds \(u_\veps\cdot v_\veps\wtwos u\cdot v\) in \(L^1(\Om;\R^n)\).

We collect some further properties of two-scale convergence. 
The next lemma explains why two-scale convergence is often considered as an intermediate convergence between the strong and weak one in \(L^p(\Om\times Q;\R^n)\) (cf. \cite[Theorem 1.3]{Vi06}). 
\begin{lemma}\label{lem:ts_intermediate}
    Let \(p\in [1,\infty)\), let \(\sequence{u}{\veps}\subset L^p(\Om\times Q;\R^n)\) be bounded and \(u\in L^p(\Om;\R^n)\). 
		\begin{enumerate}[(i)]
			\item\label{item:strong_int_ts}
				If \(u\) is independent of the second variable and \(u_\veps\to u\) strongly in \(L^p(\Om;\R^n)\), then also \(u_\veps\stwos u\) in \(L^p(\Om;\R^n)\).
			\item\label{item:ts_int_weak}
				If \(u_\veps\overset{2s}{\rightharpoonup} u\) in \(L^p(\Om;\R^n)\), then 
				\begin{equation*}
					u_\veps\rightharpoonup \tilde{u}(x):= \int_Q u(x,y)\ dx \quad
					\text{weakly in } L^p(\Om;\R^n)
				\end{equation*}
			    and
                \begin{equation}
                    \norm{\tilde{u}}{L^p(\Om;\R^n)}
                    \le \norm{u}{L^p(\Om\times Q;\R^n)}
                    \le \liminf_{\veps} \norm{u_\veps}{L^p(\Om;\R^n)}.
                \end{equation}
		\end{enumerate}
\end{lemma}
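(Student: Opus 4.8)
\textbf{Proof plan for Lemma~\ref{lem:ts_intermediate}.}

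The plan is to deduce both items directly from the characterization of two-scale convergence via the unfolding operator (Proposition~\ref{prop:wts_unfold}), exploiting that $S_\veps$ is a linear isometry on $L^p$ and behaves well under the natural embeddings. For item~\eqref{item:strong_int_ts}, I would first record that when $u$ is independent of the second variable, $S_\veps(u)\to u$ strongly in $L^p(\R^3\times Q;\R^n)$: indeed $S_\veps(u)(x,z)=\hat u(\veps\lfloor x/\veps\rfloor+\veps z)$ is, up to the zero extension, a piecewise-constant-in-cells reshuffling of $u$ that converges to $u(x)$ in $L^p$ by density (approximate $u$ by a continuous compactly supported function, for which the convergence is an elementary uniform-continuity estimate, and use the $L^p$-isometry \eqref{eq:isometry} to control the error terms). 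Then for the sequence $u_\veps\to u$ strongly in $L^p(\Om;\R^n)$, I write
\begin{equation*}
  \norm{S_\veps(u_\veps)-u}{L^p(\R^3\times Q;\R^n)}
  \le \norm{S_\veps(u_\veps-u)}{L^p(\R^3\times Q;\R^n)}
  + \norm{S_\veps(u)-u}{L^p(\R^3\times Q;\R^n)},
\end{equation*}
bound the first term by $\norm{u_\veps-u}{L^p(\Om;\R^n)}$ using the isometry \eqref{eq:isometry}, and send $\veps\to 0$; by Proposition~\ref{prop:wts_unfold} this yields $u_\veps\stwos u$. (Weak two-scale convergence plus convergence of norms is exactly strong two-scale convergence, so the strong $L^p$-convergence of $S_\veps(u_\veps)$ already gives everything at once.)

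For item~\eqref{item:ts_int_weak}, assume $u_\veps\wtwos u$, i.e.\ $S_\veps(u_\veps)\weakly u$ in $L^p(\R^3\times Q;\R^n)$. To identify the weak $L^p(\Om)$-limit of $u_\veps$ itself, I would test against an arbitrary $\psi\in L^q(\Om;\R^n)$, viewed as an element of $L^q(\Om;C_\per(Q;\R^n))$ constant in $z$, and use \eqref{eq:special_unfold} together with the integral identity \eqref{eq:def_ts} characterizing weak two-scale convergence (equivalently, the definition of weak two-scale convergence applied to a $z$-independent test function):
\begin{equation*}
  \int_\Om u_\veps(x)\cdot\psi(x)\,\dl x
  \;\longrightarrow\;
  \int_\Om\int_Q u(x,z)\cdot\psi(x)\,\dl z\,\dl x
  = \int_\Om \Big(\int_Q u(x,z)\,\dl z\Big)\cdot\psi(x)\,\dl x,
\end{equation*}
which is precisely $u_\veps\weakly \tilde u$ in $L^p(\Om;\R^n)$ with $\tilde u(x)=\int_Q u(x,z)\,\dl z$. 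For the norm chain, the middle inequality $\norm{\tilde u}{L^p(\Om;\R^n)}\le\norm{u}{L^p(\Om\times Q;\R^n)}$ is Jensen's inequality (convexity of $t\mapsto|t|^p$) applied to the average over $Q$, which has unit measure; the right inequality $\norm{u}{L^p(\Om\times Q;\R^n)}\le\liminf_\veps\norm{u_\veps}{L^p(\Om;\R^n)}$ follows from weak lower semicontinuity of the norm in $L^p(\R^3\times Q;\R^n)$ applied to $S_\veps(u_\veps)\weakly u$, combined with the isometry identity $\norm{S_\veps(u_\veps)}{L^p(\R^3\times Q;\R^n)}=\norm{u_\veps}{L^p(\Om;\R^n)}$ from \eqref{eq:isometry}.

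The only genuinely non-formal point is the strong $L^p$-convergence $S_\veps(u)\to u$ for fixed $u$ independent of $z$, used in item~\eqref{item:strong_int_ts}; everything else is a one-line consequence of the isometry property of $S_\veps$, Proposition~\ref{prop:wts_unfold}, Jensen's inequality, and weak lower semicontinuity. I expect that density step to be the main (modest) obstacle: one reduces to $u\in C_c(\R^3;\R^n)$ by density in $L^p$, controls the tail/boundary discrepancy between $\Om$ and the union of $\veps$-cells it meets using \eqref{eq:isometry}, and for the continuous representative uses that $|\veps\lfloor x/\veps\rfloor+\veps z - x|\le \sqrt 3\,\veps\to 0$ uniformly together with uniform continuity. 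This is routine and I would only sketch it, citing \cite{Vi06} for the precise statement $S_\veps(u)\to u$ in $L^p$.
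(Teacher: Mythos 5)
Your proof is correct; the paper does not actually prove this lemma but simply cites \cite[Theorem~1.3]{Vi06}, and your unfolding-based argument (isometry of $S_\veps$ plus Proposition~\ref{prop:wts_unfold}, Jensen, and weak lower semicontinuity) is exactly the standard route behind that reference and is consistent with the paper's framework. The only point worth a remark is the endpoint $p=1$ in item~\eqref{item:ts_int_weak}, where \eqref{eq:def_ts} is not literally the definition and one should instead pass to the limit in $\int_{\R^3}\int_Q S_\veps(u_\veps)\cdot S_\veps(\psi)\,\dl z\,\dl x$ using weak $L^1$-convergence against the bounded, a.e.-convergent sequence $S_\veps(\psi)$ — routine, but it deserves a line.
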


Next, we present an augmented two-scale compactness result, which is by now well-known in the context of high-contrast homogenization, cf. \cite{ChCh12,DaKrPa21,DaGaPa2}.  
Note that, up to the weak convergence of the extensions, a similar lemma was already derived in \cite[Lemma 4.7]{Al92}. For convenience of the reader, we have included a proof of Lemma \ref{lem:ts_key_lemma} in Appendix \ref{sec:app_ts}.

\begin{lemma}\label{lem:ts_key_lemma}
    Let \(\sequence{\veps}{k}\) be an infinitesimal sequence and let \(\sequence{m}{k}\subset W^{1,2}(\Om;\R^3)\) be such that \(\sequence{m}{k}\) is bounded in \(L^2(\Om;\R^3)\). Let \(\sequence{\tilde{m}}{k}\subset W^{1,2}(\Om;\R^3)\) denote a sequence of extensions fulfilling
	\begin{gather*}
		\tilde{m}_k = m_k \quad \text{a.\,e. in } \Omoeps,\\
		\norm{\tilde{m}_k}{L^2(\Om;\R^3)} \le C_1\norm{m_k}{L^2(\Omoeps;\R^3)},\\
		\norm{D\tilde{m}_k}{L^2(\Om;\R^{3\times 3})} 
        \le C_1\norm{D m_k}{L^2(\Omoeps;\R^{3\times 3})}
	\end{gather*}
     for some constant \(C_1>0\) independent of \(\veps\). Moreover, assume that there exists another constant \(C_2>0\) such that
    \begin{equation}\label{eq:key_est}
        \norm{\veps_k\chi_{k}^0\nabla m_k}{L^2(\Om;\R^{3\times 3})} 
        + \norm{\chi_{k}^1\nabla m_k}{L^2(\Om;\R^{3\times 3})} 
        \le C_2.
    \end{equation}
    Then, there exist
    \begin{equation}\label{eq:elements}
        m\in L^2(\Om;W_{\per}^{1,2}(\R^3;\R^3)),\quad
        \tilde{m}\in W^{1,2}(\Om;\R^3),\quad 
        w\in L^2(\Om;W_0^{1,2}(Q_0;\R^3))
    \end{equation}
   such that, up to subsequences,
    \begin{alignat}{2}
			&m(x,z) = \tilde{m}(x) + w(x,z)
			\qquad &&\text{ for a.e. } (x,z)\in \Om\times Q, \label{eq:decomoposition} \\
			&m_k \wtwos m,
			\quad \veps _k\nabla m_k \wtwos \nabla_z w
			\qquad &&\text{ weakly two-scale in } L^2(\Om;\R^{3\times 3}), \label{eq:ts_convergences}\\
			&\tilde{m}_k \weakly \tilde{m}
			\qquad &&\text{ weakly in } W^{1,2}(\Om;\R^3). \label{eq:weak_conv_ext}
    \end{alignat}
\end{lemma}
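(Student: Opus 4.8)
The plan is to follow the compactness architecture that is standard in high-contrast homogenization, combining the norm-preserving extension hypothesis on $\Omoeps$ with the uniform bound \eqref{eq:key_est} on the inclusions. First I would extract the limits coming from the extensions: by hypothesis $\|\tilde m_k\|_{L^2}\le C_1\|m_k\|_{L^2(\Omoeps)}$ and $\|D\tilde m_k\|_{L^2}\le C_1\|Dm_k\|_{L^2(\Omoeps)}\le C_1 C_2$, so $(\tilde m_k)_k$ is bounded in $W^{1,2}(\Om;\R^3)$, and up to a subsequence $\tilde m_k\weakly\tilde m$ weakly in $W^{1,2}(\Om;\R^3)$, which gives \eqref{eq:weak_conv_ext}. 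Since the embedding $W^{1,2}(\Om;\R^3)\hookrightarrow L^2(\Om;\R^3)$ is compact, $\tilde m_k\to\tilde m$ strongly in $L^2(\Om;\R^3)$, hence by Lemma \ref{lem:ts_intermediate}\eqref{item:strong_int_ts} also $\tilde m_k\stwos\tilde m$ in $L^2(\Om;\R^3)$.

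Next I would treat $(m_k)_k$ itself. The sequence is bounded in $L^2(\Om;\R^3)$, so by Proposition \ref{prop:wts_unfold} (and up to a further subsequence) there is $m\in L^2(\Om\times Q;\R^3)$ with $m_k\wtwos m$, i.e.\ $S_{\veps_k}(m_k)\weakly m$ in $L^2(\R^3\times Q;\R^3)$. To get the first convergence in \eqref{eq:ts_convergences} it remains to upgrade $m$ to have the regularity and structure in \eqref{eq:elements}, \eqref{eq:decomoposition}. Here I would use \eqref{eq:grad-unf}: $S_{\veps_k}(\veps_k\nabla m_k)=\nabla_z S_{\veps_k}(m_k)$. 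Splitting $\veps_k\nabla m_k=\veps_k\chi_k^0\nabla m_k+\veps_k\chi_k^1\nabla m_k$, the first piece is bounded in $L^2$ by \eqref{eq:key_est}, while the second satisfies $\|\veps_k\chi_k^1\nabla m_k\|_{L^2}\le\veps_k C_2\to 0$. Hence $\nabla_z S_{\veps_k}(m_k)$ is bounded in $L^2(\R^3\times Q;\R^{3\times 3})$, so $S_{\veps_k}(m_k)$ is bounded in $L^2(\R^3;W^{1,2}(Q;\R^3))$ and, passing to a subsequence, $S_{\veps_k}(m_k)\weakly m$ in that space with $\nabla_z m\in L^2$; combined with the vanishing of the $\chi_k^1$-part, $\nabla_z m$ is supported in $\Om\times Q_0$ in the sense that testing against functions supported in $Q_1$ gives zero, and a standard argument (cf.\ the unfolding/two-scale gradient lemma, e.g.\ \cite[Lemma 4.7]{Al92}) identifies $m(x,\cdot)\in W^{1,2}_{\per}(Q;\R^3)$ with the periodicity coming from the $\lfloor x/\veps\rfloor$-structure of $S_\veps$. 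Writing $\tilde m(x):=\langle m(x,\cdot)\rangle_Q$ and $w(x,z):=m(x,z)-\tilde m(x)$ would a priori only produce a cell average; to see that this $\tilde m$ coincides with the weak $W^{1,2}$-limit of $\tilde m_k$ I would compare: $m_k\wtwos m$ gives $m_k\weakly\langle m(x,\cdot)\rangle_Q$ weakly in $L^2(\Om;\R^3)$ by Lemma \ref{lem:ts_intermediate}\eqref{item:ts_int_weak}, while on $\Omoeps$ we have $m_k=\tilde m_k$ and $\chi_k^1\weakly |Q_1|$; testing the identity $m_k\chi_k^1=\tilde m_k\chi_k^1$ against smooth functions and using $\tilde m_k\to\tilde m$ strongly in $L^2$ together with \eqref{eq:stwos_char} forces $\langle m(x,\cdot)\rangle_Q$ and $\tilde m$ to agree on the $Q_1$-average, and then the $W^{1,2}_0(Q_0)$-membership of $w$ forces $\tilde m(x)=\langle m(x,\cdot)\rangle_Q$ globally, giving \eqref{eq:decomoposition}. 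Finally, $\veps_k\nabla m_k\wtwos\nabla_z m=\nabla_z w$ follows from $S_{\veps_k}(\veps_k\nabla m_k)=\nabla_z S_{\veps_k}(m_k)\weakly\nabla_z m$ and Proposition \ref{prop:wts_unfold}, completing \eqref{eq:ts_convergences}.

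The step I expect to be the main obstacle is the rigorous identification that the weak two-scale limit $m$ has the structure \eqref{eq:decomoposition}, namely that $w=m-\tilde m$ lies in $L^2(\Om;W^{1,2}_0(Q_0;\R^3))$ — in particular that $w$ vanishes (with its trace) outside $Q_0$, rather than merely having a gradient concentrated there. This requires showing that $S_{\veps_k}(m_k)$ restricted to $\Om\times Q_1$ converges to the $x$-measurable but $z$-constant function $\tilde m(x)$, which is exactly where the norm-control of the extensions on $\Omoeps$ enters decisively: one compares $S_{\veps_k}(m_k\chi_k^1)=S_{\veps_k}(m_k)\chi_k^1$, which by \eqref{eq:special_unfold}-type arguments and the strong convergence of $S_{\veps_k}(\tilde m_k)$ (coming from $\tilde m_k\stwos\tilde m$, i.e.\ $S_{\veps_k}(\tilde m_k)\to\tilde m$ strongly in $L^2(\R^3\times Q)$) pins down $m=\tilde m$ a.e.\ on $\Om\times Q_1$. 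Feeding this into the weak $W^{1,2}(Q)$-limit then yields that $w(x,\cdot)\in W^{1,2}_0(Q_0;\R^3)$ by continuity of the trace. The remaining bookkeeping — diagonalizing the successive subsequence extractions, and checking that all asserted convergences hold along one common subsequence — is routine.
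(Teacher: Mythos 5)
Your proposal follows essentially the same route as the paper's proof in Appendix~\ref{sec:app_ts}: two-scale compactness for $(m_k)_k$ and $(\veps_k\nabla m_k)_k$ (which you obtain via the unfolding operator rather than by citing \cite[Proposition~1.14]{Al92}), the observation that $\veps_k\chi_k^1\nabla m_k$ tends to zero strongly in $L^2$ while it weakly two-scale converges to $\chi^1\nabla_z m$, hence $\nabla_z m=0$ on $\Om\times Q_1$, and the identification of the resulting $z$-constant value of $m$ on $Q_1$ with the weak $W^{1,2}$-limit $\tilde m$ of the extensions, via $\chi_k^1 m_k=\chi_k^1\tilde m_k$, the strong $L^2$-convergence of $(\tilde m_k)_k$, and uniqueness of two-scale limits. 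One intermediate assertion in your middle paragraph is false, although it is not load-bearing: it is not true that $\tilde m(x)=\langle m(x,\cdot)\rangle_{Q}$ ``globally''. Since $w\in L^2(\Om;W^{1,2}_0(Q_0;\R^3))$ has no reason to have vanishing mean over $Q$, one has $\langle m(x,\cdot)\rangle_{Q}=\tilde m(x)+\langle w(x,\cdot)\rangle_{Q}\neq\tilde m(x)$ in general, so membership of $w$ in $W^{1,2}_0(Q_0;\R^3)$ does \emph{not} force the two to agree. The correct identification --- which you do state in your final paragraph --- is $\tilde m(x)=\frac{1}{|Q_1|}\int_{Q_1}m(x,z)\,{\dl z}$, i.e.\ $m=\tilde m$ a.e.\ on $\Om\times Q_1$; it is this, and not the full cell average, that makes $w:=m-\tilde m$ vanish on $Q_1$ and hence (by the trace argument you indicate) land in $L^2(\Om;W^{1,2}_0(Q_0;\R^3))$. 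With that sentence corrected, the argument matches the paper's.
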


Finally, we recall a two-scale result for the demagnetizing field defined in \eqref{eq:demag} and \eqref{eq:maxwell} (cf. \cite[Proposition4.3]{AlDF15}, \cite[Proposition~11]{SR07}).
\begin{lemma}\label{lem:demag_limit}
    Let \(\sequence{m}{k}\subset W^{1,2}(\Om;\R^3)\) weakly two-scale converge in \(L^2(\Om;\R^3)\) to a limit function \(m\in L^2(\Om\times Q;\R^3)\). Then, up to a subsequence, the sequence \((h_d[m_k])_k\) of associated demagnetizing fields weakly two-scale converges to
    \begin{equation*}
        h_d[\mv{m}{Q}](x)+h_d^z[m](x,z)\chi_\Om(x),
    \end{equation*}
    where 
    \begin{equation}
        h_d^z: L^2(\Om\times Q)\to L^2(\Om;L^2_\per(Q))
    \end{equation}
    is defined as \(h_d^z[m]\coloneqq \nabla_z r_m\),
    where \(r_m\in L^2(\Om;W^{1,2}_\per(Q)/\R)\) is such that for almost every \(x\in \Om\) the scalar function \(r_m(x,\cdot)\) is the unique solution in \(W^{1,2}_\per(Q)/\R\) to the cell problem
    \begin{equation}
        \int_Q \nabla_z r_m(x,z)\cdot \nabla_z \psi(z)\,{\dl z}
        = - \int_Q m(x,z)\cdot \nabla_z \psi(z)\,{\dl z}
        \quad \forall \psi\in W^{1,2}_\per(Q).
    \end{equation} 
\end{lemma}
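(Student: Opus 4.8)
The plan is to extract a two-scale limit \(h\) of the bounded sequence \((h_d[m_k])_k\) by \(L^2\)-compactness and then read off both the \emph{structure} of \(h\) and the \emph{identity} of its two components from the two magneto-static equations \eqref{eq:maxwell}. First I would record that \((h_d[m_k])_k\) is bounded in \(L^2(\Om;\R^3)\) (indeed in \(L^2(\R^3;\R^3)\)), since \(\norm{h_d[m_k]}{L^2(\R^3)}\le \norm{m_k}{L^2(\Om)}\) and \((m_k)_k\) is bounded in \(L^2(\Om;\R^3)\) by hypothesis; hence, along a subsequence, \(h_d[m_k]\wtwos h\) in \(L^2(\Om;\R^3)\) for some \(h\in L^2(\Om\times Q;\R^3)\).

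To describe \(h\) I would exploit curl-freeness. Since \(\curl h_d[m_k]=0\) on the simply connected set \(\R^3\), one may write \(h_d[m_k]=\nabla u_k\) with \(\nabla u_k\in L^2(\R^3;\R^3)\); normalizing \(\int_\Om u_k\,{\dl x}=0\), the Poincaré inequality bounds \((u_k)_k\) in \(W^{1,2}(\Om)\), so \(u_k\weakly u\) in \(W^{1,2}(\Om)\) up to a subsequence. The classical two-scale compactness theorem for gradients (\cite{Al92}, and in unfolding form \cite{Vi06}, \cite{CioDo99}) then produces \(r\in L^2(\Om;W^{1,2}_{\per}(Q)/\R)\) with \(h_d[m_k]=\nabla u_k\wtwos \nabla u(x)+\nabla_z r(x,z)=h(x,z)\) for a.e.\ \((x,z)\in \Om\times Q\). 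The macroscopic part is identified by averaging: since \(\langle \nabla_z r\rangle_Q=0\), Lemma~\ref{lem:ts_intermediate}~(ii) gives \(h_d[m_k]\weakly \nabla u\) weakly in \(L^2(\Om;\R^3)\); simultaneously \(m_k\weakly \mv{m}{Q}\) weakly in \(L^2(\Om;\R^3)\) (again Lemma~\ref{lem:ts_intermediate}~(ii)), and \(h_d\) is linear and continuous, hence weakly continuous, as a map \(L^2(\Om;\R^3)\to L^2(\R^3;\R^3)\); uniqueness of weak limits then forces \(\nabla u=h_d[\mv{m}{Q}]\) a.e.\ in \(\Om\).

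For the corrector \(r\) I would pass to the limit in the divergence equation. The first relation in \eqref{eq:maxwell} reads \(\diver(h_d[m_k]+m_k\chi_\Om)=0\) in \(\R^3\); I would test it against \(\varphi_k(x)\coloneqq \veps_k\,\psi(x)\,\phi(x/\veps_k)\) with \(\psi\in \ccinf{\Om}\) and \(\phi\in C^\infty_{\per}(Q)\). Writing \(\nabla\varphi_k(x)=\veps_k\nabla\psi(x)\phi(x/\veps_k)+\psi(x)(\nabla\phi)(x/\veps_k)\), the \(\nabla\psi\)-term tends to \(0\) strongly in \(L^2\), while \(\psi(x)(\nabla\phi)(x/\veps_k)\stwos \psi(x)\nabla_z\phi(z)\); since on \(\Om\) one has \(h_d[m_k]+m_k\chi_\Om\wtwos h+m\) (linearity of weak two-scale limits, with \(\chi_\Om\equiv1\) there), passing to the limit and using \(\int_Q \nabla_z\phi\,{\dl z}=0\) yields, for a.e.\ \(x\in\Om\), \(\int_Q \big(\nabla_z r(x,z)+m(x,z)\big)\cdot\nabla_z\phi(z)\,{\dl z}=0\) for all \(\phi\in C^\infty_{\per}(Q)\), hence by density for all \(\phi\in W^{1,2}_{\per}(Q)\). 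This is exactly the cell problem defining \(r_m(x,\cdot)\), so by uniqueness in \(W^{1,2}_{\per}(Q)/\R\) we obtain \(\nabla_z r=\nabla_z r_m=h_d^z[m]\). Combined with \(\nabla u=h_d[\mv{m}{Q}]\), this gives \(h(x,z)=h_d[\mv{m}{Q}](x)+h_d^z[m](x,z)\) for a.e.\ \((x,z)\in\Om\times Q\), i.e.\ the claim (with \(\chi_\Om\equiv1\) on \(\Om\)).

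The step I expect to be the main obstacle is the passage from curl-freeness to the two-scale decomposition \(h=\nabla u+\nabla_z r\): one must work with scalar potentials \(u_k\) living on all of \(\R^3\), restrict them to \(\Om\) with a consistent normalization, and check carefully that the gradient two-scale compactness lemma applies so that the weak limit \(u\) and the corrector \(r\) it produces are compatible with the other weak and two-scale limits involved (in particular with the averaging identity for \(\nabla u\)). Once this structural fact is secured, the two identification steps are routine limit passages in the linear system \eqref{eq:maxwell}.
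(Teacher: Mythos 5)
The paper does not actually prove this lemma: it is recalled from the literature (\cite[Proposition~4.3]{AlDF15}, \cite[Proposition~11]{SR07}) and re-interpreted a posteriori via the constant-rank differential-constraint framework of \cite{FoKr10} applied to \eqref{eq:diff_constraints}. Your argument is a correct, self-contained reconstruction along the standard lines of those references: boundedness of \((h_d[m_k])_k\) from \(\norm{h_d[m]}{L^2(\R^3)}\le\norm{m}{L^2(\Om)}\); the curl-free structure turned into scalar potentials \(u_k\) and Allaire's gradient two-scale compactness to get \(h=\nabla u+\nabla_z r\); identification of \(\nabla u\) with \(h_d[\mv{m}{Q}]\) by weak-weak continuity of the bounded linear operator \(h_d\) together with Lemma~\ref{lem:ts_intermediate}(ii); and identification of \(\nabla_z r\) with \(h_d^z[m]\) by testing \(\diver(h_d[m_k]+m_k\chi_\Om)=0\) against \(\veps_k\psi(x)\phi(x/\veps_k)\) and invoking uniqueness (Lax--Milgram on \(W^{1,2}_\per(Q)/\R\)) for the cell problem. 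All of these steps are sound, including the one you flag as delicate.

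The one concrete incompleteness is that you only identify the two-scale limit on \(\Om\times Q\), whereas the statement — note the factor \(\chi_\Om(x)\) multiplying \(h_d^z[m]\) — and its use in the lower-bound proof (where \(\liminf_k\int_{\R^3}|h_d[m_k]|^2\) is estimated by an integral over all of \(\R^3\times Q\)) require the limit on \(\R^3\times Q\). You must therefore also show that the microscopic corrector vanishes outside \(\ol{\Om}\). This follows from the same mechanism: apply the gradient two-scale compactness locally on bounded open sets \(U\subset\subset\R^3\setminus\ol{\Om}\) (after renormalizing \(u_k\) there), then test the divergence equation with \(\veps_k\psi(x)\phi(x/\veps_k)\) for \(\psi\in C_c^\infty(\R^3\setminus\ol{\Om})\); since \(m_k\chi_\Om\) does not contribute, you get \(\int_Q\nabla_z r(x,\cdot)\cdot\nabla_z\phi\,{\dl z}=0\) for a.e.\ such \(x\) and all periodic \(\phi\), hence \(\nabla_z r(x,\cdot)=0\) there, while the macroscopic part is \(h_d[\mv{m}{Q}]\) by the same weak-continuity argument. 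With this patch your proof establishes the lemma in full.
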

We note that the above result exemplifies the abstract result in \cite{FoKr10} about two-scale convergence under differential constraints. Recalling \eqref{eq:diff_constraints}, it follows from \cite[Theorem~1.2]{FoKr10} that the limit of \((h_d[m_k])_k\) has to fulfil
    \begin{equation*}
    \mathcal{A}_z(m, h_d^z[m])
    \coloneqq
    \left(\begin{array}{cc}
       \diver_z  &  \diver_z\\
        0  &     \curl_z
    \end{array}\right)
    \left(\begin{array}{cc}
         m \\
         h_d^z[m]
    \end{array}\right)
    =0
    \quad \text{for a.e. } x\in \Om
    \qquad ,
\end{equation*}
which is exactly an equivalent formulation of \eqref{eq:diff_constraints_z}.

\subsection{Extension operators}\label{subsec:extensions}
Extension operators providing separate bounds on the $L^2$-norm of the maps and of their gradients are a fundamental tool in the study of homogenization problems on perforated domains. The seminal result in this direction can be found in \cite[Theorem 2.1]{AcPiMaPe92} and reads as follows. 
\begin{proposition}\label{thm:reflect_ext}
	Let \(1\le p<\infty\) and let \(\Omoeps\) be as in \eqref{eq:setup2}.
	Then, there exist a linear and continuous extension operator
	\begin{equation*}
		T_\veps: 
		W^{1,p}(\Omoeps;\R^3) \to W^{1,p}(\Om;\R^3)
	\end{equation*}
	and a constant \(C>0\) independent of \(\veps\) and \(\Om\), such that 
	\begin{gather*}
		T_\veps f = f \quad \text{a.\,e. in } \Omoeps,\\
		\norm{T_\veps f}{L^p(\Om;\R^3)} \le C\norm{f}{L^p(\Omoeps;\R^3)},\\
		\norm{D(T_\veps f)}{L^p(\Om;\R^{3\times 3})} \le C\norm{Df}{L^p(\Omoeps;\R^{3\times 3})}
	\end{gather*}
	for every \(f \in W^{1,p}(\Omoeps;\R^3)\). 
\end{proposition}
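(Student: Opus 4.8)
The plan is to localise the construction around the inclusions. The holes $\veps(Q_0+z)$, $z\in Z_\veps$, sit one per cell, periodically, at mutual distance of order $\veps$ and — by \eqref{eq:setup1} — at distance at least $\veps$ from $\partial\Om$; outside them $f$ is already given. So it is enough to build once and for all an extension operator on a bounded reference configuration, rescale it by $\veps$, paste the rescaled copies around each hole, and keep $f$ unchanged on the uncut collar near $\partial\Om$. The one feature that cannot be obtained by soft arguments, and that dictates the reference step, is that the two estimates must control $\norm{Df}{L^p}$ and $\norm{f}{L^p}$ \emph{separately} — precisely the homogeneity exploited in Lemma~\ref{lem:ts_key_lemma}.

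For the reference operator I would first enlarge a single cell to a window of finitely many periods. Since $E$ is open, connected and $\Z^3$-periodic with Lipschitz boundary, there is $N_0=N_0(E)\in\N$ such that, with $W:=(-N_0,N_0+1)^3$ and $D:=W\cap E$, the set $D$ is bounded, connected, with Lipschitz boundary, contains no hole $Q_0+k$, and surrounds the central cell on all sides. On the bounded Lipschitz domain $D$ one has a Stein-type extension operator $\mathcal{E}\colon W^{1,p}(D)\to W^{1,p}(\R^3)$ that is bounded \emph{simultaneously} $L^p\to L^p$ and $W^{1,p}\to W^{1,p}$ and maps constants to constants. Set $T_0 f:=(\mathcal{E}f)|_W$: then $T_0 f=f$ a.e. in $D$, the $L^p\to L^p$ bound gives $\norm{T_0 f}{L^p(W)}\le c\,\norm{f}{L^p(D)}$, and, writing $T_0 f=\mathcal{E}\!\left(f-\mv{f}{D}\right)+\mv{f}{D}$ so that $D(T_0 f)=D\mathcal{E}\!\left(f-\mv{f}{D}\right)$ (here the constant-preservation of $\mathcal{E}$ is used), the $W^{1,p}\to W^{1,p}$ bound together with the Poincaré--Wirtinger inequality on the connected domain $D$ gives
\[
\norm{D(T_0 f)}{L^p(W)}
\le c\,\norm{f-\mv{f}{D}}{W^{1,p}(D)}
\le c\,\norm{Df}{L^p(D)},
\]
with $c=c(E,p)$.

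Next I would rescale and paste. For $z\in Z_\veps$ put $g_z(y):=f(\veps(y+z))$ on $D$ (this is licit once the window has been centred on cell $z$, since then $\veps(D+z)\subset\Omoeps$) and, on the hole $\veps(Q_0+z)$, define $(T_\veps f)(x):=(T_0 g_z)(x/\veps-z)$, while $T_\veps f:=f$ on all of $\Omoeps$. As $T_0$ is an extension, the two prescriptions coincide on the material part of each cell; after the standard partition-of-unity patching of the finitely overlapping windows $\veps(W+z)$ the pieces glue to a linear operator $T_\veps$ with $T_\veps f\in W^{1,p}(\Om;\R^3)$ and $T_\veps f=f$ a.e. in $\Omoeps$. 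The dilation $v\mapsto v(\cdot/\veps-z)$ multiplies $L^p$-norms by $\veps^{3/p}$ and gradient $L^p$-norms by $\veps^{3/p-1}$; applied once to the data $g_z\leftrightarrow f$ and once to the images $T_0 g_z\leftrightarrow T_\veps f$, these factors cancel against each other in the reference estimates, so that, hole by hole,
\[
\norm{T_\veps f}{L^p(\veps(Q_0+z))}\le c\,\norm{f}{L^p(\veps(D+z))},
\qquad
\norm{D(T_\veps f)}{L^p(\veps(Q_0+z))}\le c\,\norm{Df}{L^p(\veps(D+z))}.
\]
Summing over $z\in Z_\veps$ (the enlarged patches $\veps(D+z)$ overlap with multiplicity $\le N(E)$) and adding the contribution of $\Omoeps$, where $T_\veps f=f$, yields the claimed bounds with a constant $C=C(E,p)$, hence independent of $\veps$ and of $\Om$.

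The main obstacle is the reference step, and more precisely the passage from one cell to the multi-period window: the one-cell intersection $Q\cap E$ need not be connected — its components may be joined only through neighbouring cells — and without connectedness the Poincaré--Wirtinger inequality, which is exactly what upgrades a generic Sobolev extension to one with the homogeneous gradient bound, is unavailable. The attendant bookkeeping — controlling the overlaps of the transplanted windows, confirming that no extension is needed near $\partial\Om$ where the domain is uncut, and checking that the glued map is genuinely $W^{1,p}$ across each interface $\veps(\partial Q_0+z)$ — is the rest of the work; it is carried out in detail in \cite[Theorem~2.1]{AcPiMaPe92}.
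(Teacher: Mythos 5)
The paper does not actually prove this proposition: it is quoted (in a form adapted to the geometry \eqref{eq:setup1}--\eqref{eq:setup2}) directly from \cite[Theorem~2.1]{AcPiMaPe92}, and your argument is essentially a reconstruction of the proof given there --- a reference window of several periods to restore connectedness of the material phase, a constant-preserving Stein extension combined with Poincar\'e--Wirtinger to decouple the $L^p$- and gradient estimates, and scaling plus finite overlap to make the constant independent of $\veps$ and $\Om$. The one assertion that is not automatic is that $\veps(D+z)\subset\Omoeps$ for every $z\in Z_\veps$: definition \eqref{eq:setup1} only guarantees $\veps(\overline{Q_0}+z)\subset\Om$, so for inclusions close to $\partial\Om$ the enlarged window $\veps(W+z)$ may protrude outside $\Om$, which is precisely why the original statement in \cite{AcPiMaPe92} is formulated on an interior subdomain; to get the estimate on all of $\Om$ as claimed here one must either shrink or reflect the window for those boundary cells, a point your sketch asserts as ``licit'' and then defers to the reference rather than resolving.
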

In our problem we will need a variants of the above result in order to take care of extensions of the magnetizations. The main hurdle concerns the necessity of extending the maps while at the same time preserving the saturation constraint. This is guaranteed by \cite[Theorem 3.1]{GaHaPa23},
whose simplified statement in our setting is stated in the next proposition. 
\begin{proposition}\label{thm:target_ext}
	Let \(\Omoeps\) be as in \eqref{eq:setup2}.
	There exists an extension operator
	\begin{equation*}
	\sfT_\veps: 
	W^{1,2}(\Omoeps;\sph^2) \to W^{1,2}(\Om;\sph^2)
	\end{equation*}
	and a constant \(C>0\), independent of \(\veps\), such that for \(\veps\) sufficiently small
	\begin{gather*}
	\sfT_\veps f = f\quad \text{a.\,e. in } \Omoeps, \label{eq:id_ext}\\
	\norm{\sfT_\veps f}{L^2(\Om;\sph^2)} \le C\norm{f}{L^2(\Omoeps;\sph^2)}, \label{eq:funct_ext}\\
	\norm{D(\sfT_\veps f)}{L^2(\Om;\R^{3\times 3})} \le C\norm{Df}{L^2(\Omoeps;\R^{3\times 3})}, \label{eq:grad_ext}
	\end{gather*}
	for every \(f\in W^{1,2}(\Omoeps;\sph^2)\).
\end{proposition}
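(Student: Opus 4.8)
Since Proposition~\ref{thm:target_ext} is quoted from \cite[Theorem~3.1]{GaHaPa23}, proving it here would essentially amount to specializing that result; let me nonetheless outline how I would establish a statement of this type, since the perforation scaling dictates the entire argument.

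The plan is to reduce everything to a single reference cell and then glue. Using that $\overline{Q_0}$ is compactly contained in $Q$ and that the holes $\veps(Q_0+z)$, $z\in Z_\veps$, are pairwise at mutual distance of order $\veps$, I would fix an enlarged set $Q_0\Subset Q'\Subset Q$, set $Q_1':=Q'\setminus\overline{Q_0}$, and arrange that for $\veps$ small every $\veps(Q'+z)$ with $z\in Z_\veps$ lies in $\Om$ with $\veps(Q_1'+z)\subset\Omoeps$ (the cells meeting a fixed neighbourhood of $\partial\Om$ needing only routine separate care). It then suffices to construct one \emph{nonlinear} extension operator $\mathcal E\colon W^{1,2}(Q_1';\sph^2)\to W^{1,2}(Q';\sph^2)$, with $\mathcal Eg=g$ on $Q_1'$ and the \emph{scale-invariant} bound $\norm{\nabla(\mathcal Eg)}{L^2(Q_0;\R^{3\times3})}\le C\norm{\nabla g}{L^2(Q_1';\R^{3\times3})}$. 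Defining $\sfT_\veps f$ cellwise from rescaled copies of $\mathcal E$ (and equal to $f$ on $\Omoeps$), the $L^2$-estimates are then automatic, since an $\sph^2$-valued map has $|f|\equiv1$ and hence $\norm{\sfT_\veps f}{L^2(\Omzeps)}^2=|\Omzeps|\le C|\Omoeps|=C\norm{f}{L^2(\Omoeps)}^2$, while the gradient bound follows by summing the homogeneous cell estimates and using the finite overlap of the $\veps(Q_1'+z)$. The reason the cell estimate must carry no zeroth-order term is that a contribution $C\norm{g}{L^2(Q_1')}^2=C|Q_1'|$ would, after the $\veps$-rescaling and summation over the $\sim\veps^{-3}$ cells, blow up like $\veps^{-2}$.

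To build $\mathcal E$ I would run an energy dichotomy, fixing a small threshold $\delta_0$ and writing $\gamma:=g|_{\partial Q_0}\in H^{1/2}(\partial Q_0;\sph^2)$. When $\norm{\nabla g}{L^2(Q_1')}\ge\delta_0$, I would let $\mathcal Eg$ on $Q_0$ be any $W^{1,2}$-extension of $\gamma$ with values in $\sph^2$ satisfying $\norm{\nabla(\mathcal Eg)}{L^2(Q_0)}^2\le C\bigl(1+\norm{\gamma}{H^{1/2}(\partial Q_0)}^2\bigr)$; such a map exists because $\pi_1(\sph^2)=0$ allows any $H^{1/2}$-trace to be filled in $W^{1,2}$ up to finitely many point singularities, and a quantitative construction is obtained following Bethuel's projection method, by extending $\gamma$ harmonically, mollifying, and composing with the radial projection onto $\sph^2$ after an averaging/Sard argument selecting a scale on which the mollified map avoids the origin on most of $Q_0$. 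Since $|g|\equiv1$ forces $|Q_1'|\le\delta_0^{-2}\norm{\nabla g}{L^2(Q_1')}^2$, this bound is in fact homogeneous in $\norm{\nabla g}{L^2(Q_1')}$, with constant depending on $\delta_0$. When instead $\norm{\nabla g}{L^2(Q_1')}<\delta_0$, the Poincaré inequality on the connected Lipschitz set $Q_1'$ shows that $g$ is $L^2$-close to its mean $\bar g$; as $|g|\equiv1$ this forces $|\bar g|\ge\tfrac12$ for $\delta_0$ small, so $p:=\bar g/|\bar g|\in\sph^2$ is well defined with $\norm{g-p}{L^2(Q_1')}\le C\norm{\nabla g}{L^2(Q_1')}$. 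I would then set $\mathcal Eg$ equal to $p$ in the core of $Q_0$ and, in a collar of $\partial Q_0$, equal to a cutoff interpolation along the $\sph^2$-geodesic homotopy from $g$ to $p$, after excising the exceptional set $\{\,|g-p|\ge1\,\}$ (of measure at most $C\norm{\nabla g}{L^2(Q_1')}^2$) where that homotopy is not Lipschitz; since geodesic interpolation towards a nearby point is Lipschitz in the moving variable, this yields $\norm{\nabla(\mathcal Eg)}{L^2(Q_0)}^2\le C\bigl(\norm{\nabla g}{L^2(Q_1')}^2+\norm{g-p}{L^2(Q_1')}^2\bigr)\le C\norm{\nabla g}{L^2(Q_1')}^2$.

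The hard part is precisely guaranteeing the scale invariance of the cell estimate; once that is secured, the separate $L^2$- and gradient bounds required in the statement come for free for $\sph^2$-valued maps. Within the dichotomy, the delicate ingredient is the quantitative bounded-energy extension of $H^{1/2}(\partial Q_0;\sph^2)$ boundary data in the large-energy regime, which is exactly where the Bethuel--Zheng projection-and-Sard machinery underlying this whole paper re-enters; the small-energy regime is elementary once one observes that the saturation constraint forces proximity to a constant.
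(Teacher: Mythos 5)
The paper itself offers no proof of Proposition~\ref{thm:target_ext}: it is imported verbatim from \cite[Theorem~3.1]{GaHaPa23}, so the only meaningful comparison is with the strategy of that reference rather than with an in-paper argument. Your architecture --- reduce to a reference cell, observe that the $L^2$-bound is free because $|f|\equiv 1$ gives $\norm{\sfT_\veps f}{L^2(\Omzeps)}^2=|\Omzeps|\le C|\Omoeps|=C\norm{f}{L^2(\Omoeps)}^2$, and isolate as the crux a cell estimate $\norm{\nabla(\mathcal{E}g)}{L^2(Q_0)}\le C\norm{\nabla g}{L^2(Q_1')}$ with no zeroth-order term (which would otherwise produce a divergent $\veps^{-2}$ factor after rescaling and summing over the $\sim\veps^{-3}$ cells) --- is the right skeleton, and the energy dichotomy is the standard route to such a homogeneous nonlinear estimate.

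The gaps sit precisely in the two branches of the dichotomy, and they are not cosmetic. In the large-energy branch you invoke a filling of $\gamma\in H^{1/2}(\partial Q_0;\sph^2)$ by a map in $W^{1,2}(Q_0;\sph^2)$ with energy \emph{linear} in $\norm{\gamma}{H^{1/2}(\partial Q_0)}^2$; this is the hard analytic input, not a corollary of $\pi_1(\sph^2)=0$. Two points need care: (i) the Bethuel--Zheng device of composing an $\R^3$-valued harmonic extension with a shifted radial projection $\pi_a$ produces a sphere-valued map whose boundary values are $\pi_a\circ\gamma\neq\gamma$, since $\pi_a$ does not fix $\sph^2$ pointwise for $a\neq 0$; restoring the prescribed trace requires a gluing layer near $\partial Q_0$, itself a nontrivial interpolation between two $\sph^2$-valued maps; (ii) if the available Hardt--Lin-type bound is superlinear in $\norm{\gamma}{H^{1/2}}^2$ (the classical statements carry a quadratic correction), the cellwise summation fails for energy-concentrating sequences, because $\sum_z\veps^{-1}\norm{\nabla f}{L^2(\veps(Q_1'+z))}^4$ is not controlled by $\norm{\nabla f}{L^2(\Omoeps)}^2$ uniformly in $\veps$. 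In the small-energy branch, ``excising'' the set where $|g-p|\ge 1$ is not an admissible operation: the extension must be defined on all of $Q_0$, and the trace of $g$ on a generic surface of the collar is only $W^{1,2}$ of a two-manifold, hence not uniformly close to $p$; the correct tool here is a Luckhaus-type interpolation lemma, which is what actually supplies the sphere-valued transition layer with the energy bound you assert. With these two ingredients made precise the argument closes, but as written both branches defer the constraint-preserving construction to statements that are themselves the substance of \cite{GaHaPa23}.
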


\section{Proof of Theorem~\ref{thm:strong_ts_approx}}\label{sec:main_approx_proof}

We preliminary recall an approximation result proving that every function in the space \(L^2(\Om;W^{1,2}_0(Q_0;\R^3))\) can be attained as a strong two-scale limit of smooth functions in \(C_c^\infty(\Om;\R^3)\). The idea of the construction goes back to  \cite[Lemma~6.1]{DaGaPa2}, as well as \cite[Lemma~22 and Proposition~17]{ChCh12}. To make the presentation as self-contained as possible we also provide a short proof.  

\begin{lemma}\label{lem:strong_l2}
    For any infinitesimal sequence \((\veps_k)_k\) and any \(w\in  L^2(\Om;W_0^{1,2}(Q_0;\sph^2))\) there exists a sequence \((\hat{w}_k)_k\subset C_c^\infty(\Om;\R^3)\) such that $\hat{w}_k \chi^1_k=0$ for every $k$, with
    \begin{equation}\label{item:stwos}
        \hat{w}_k\stwos w(x,z)
        \qquad \text{in } L^2(\Om;\R^3),
    \end{equation}
	and 
	\begin{equation}\label{item:stwos_grad}
		\veps_k \nabla \hat{w}_k
            \stwos \nabla_zw(x,z)
        \qquad \text{in } L^2(\Om;\R^3).
	\end{equation}
\end{lemma}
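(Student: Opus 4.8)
The plan is to build the smooth approximants $\hat{w}_k$ by first reducing to a dense class of simple functions and then mollifying and cutting off appropriately, keeping track of the support condition $\hat{w}_k\chi^1_k=0$ throughout. Concretely, I would begin with a density step: functions of the form $w(x,z)=\sum_{i=1}^N \eta_i(x)\psi_i(z)$ with $\eta_i\in C_c^\infty(\Om;\R)$ and $\psi_i\in C_c^\infty(Q_0;\R^3)$ are dense in $L^2(\Om;W^{1,2}_0(Q_0;\R^3))$ (note the target sphere plays no role once we drop into $\R^3$-valued maps, so the constraint $w(x,\cdot)+\tilde m(x)\in\sph^2$ is irrelevant for this lemma and we may work in $L^2(\Om;W^{1,2}_0(Q_0;\R^3))$). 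By linearity it then suffices to treat a single product $w(x,z)=\eta(x)\psi(z)$ with $\psi\in C_c^\infty(Q_0;\R^3)$, extended $Q$-periodically in $z$; the general case follows by a finite sum, and the passage from the dense class to a general $w$ uses a diagonal argument via the metrizability of strong two-scale convergence (equivalently, via the isometry of the unfolding operator $S_{\veps_k}$ and Proposition~\ref{prop:wts_unfold}), together with the uniform-in-$k$ bounds that the construction provides.

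For the model case $w(x,z)=\eta(x)\psi(z)$, set $\hat{w}_k(x):=\eta(x)\,\psi\!\left(\tfrac{x}{\veps_k}\right)$. Since $\supp\psi$ is a compact subset of $Q_0$ and $\psi$ is extended $Q$-periodically, for every $x$ we have $\psi(x/\veps_k)\neq 0$ only when $x/\veps_k \bmod 1 \in \supp\psi \subset Q_0$, i.e.\ only when $x$ lies in some cell $\veps_k(Q_0+z)$; away from a boundary layer of width $O(\veps_k)$ near $\p\Om$ this forces $x\in\Omega^0_{\veps_k}$ and hence $\hat{w}_k\chi^1_k=0$. To make $\hat w_k$ genuinely compactly supported in $\Om$ and kill the boundary-layer cells, I multiply by a cutoff $\theta_k\in C_c^\infty(\Om;[0,1])$ with $\theta_k=1$ on $\{x\in\Om:\dist(x,\p\Om)>2\veps_k\}$ and $|\nabla\theta_k|\le C/\veps_k$; the correction term $\psi(x/\veps_k)\eta(x)\nabla\theta_k$ is supported in a set of measure $O(\veps_k)$ and is multiplied by $\veps_k$ in \eqref{item:stwos_grad}, so it is negligible, and the $L^2$ error in \eqref{item:stwos} is $O(\veps_k)$ as well. (Since $\psi$ is merely assumed compactly supported in $Q_0$, the boundary-layer cells on which $\theta_k$ is not identically $1$ already carry the full $\psi$; there the $\theta_k\neq 1$ only reduces $|\hat w_k|$, which is harmless, and the support condition $\supp\psi\Subset Q_0$ guarantees we never touch $\Omega^1_{\veps_k}$.) One checks $\hat w_k\stwos \eta\,\psi$ directly from \eqref{eq:stwos_char}-type arguments: by \eqref{eq:special_unfold}, $S_{\veps_k}(\hat w_k)(x,z)=\theta_k(\veps_k\lfloor x/\veps_k\rfloor+\veps_k z)\,\eta(\veps_k\lfloor x/\veps_k\rfloor+\veps_k z)\,\psi(z)\to \eta(x)\psi(z)$ in $L^2(\R^3\times Q)$ by dominated convergence, which is exactly strong two-scale convergence via Proposition~\ref{prop:wts_unfold}. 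For the gradient, $\veps_k\nabla\hat w_k = \veps_k(\nabla\eta)\psi(x/\veps_k)\theta_k + \eta(\nabla\psi)(x/\veps_k)\theta_k + \veps_k\eta\psi(x/\veps_k)\nabla\theta_k$; the first term is $O(\veps_k)$ in $L^2$, the last is the negligible boundary term above, and the middle one unfolds to $\eta(x)\nabla\psi(z)=\nabla_z w(x,z)$ by the same dominated convergence argument, giving \eqref{item:stwos_grad} with matching norms (strong, not just weak, two-scale convergence).

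Finally I would assemble the general $w$: pick $w^{(j)}\to w$ in $L^2(\Om;W^{1,2}_0(Q_0;\R^3))$ from the dense class, for each $j$ produce $(\hat w^{(j)}_k)_k$ as above with $\hat w^{(j)}_k\stwos w^{(j)}$ and $\veps_k\nabla \hat w^{(j)}_k\stwos \nabla_z w^{(j)}$ strongly, and then extract a diagonal sequence $\hat w_k:=\hat w^{(j(k))}_k$ with $j(k)\to\infty$ slowly enough that $\|\hat w_k\stwos w\| $-type errors vanish; here I use that $\|S_{\veps_k}(\hat w^{(j)}_k)-w\|_{L^2(\R^3\times Q)}\le \|S_{\veps_k}(\hat w^{(j)}_k)-w^{(j)}\|_{L^2(\R^3\times Q)}+\|w^{(j)}-w\|_{L^2(\Om\times Q)}$, that the first term $\to 0$ as $k\to\infty$ for each fixed $j$, and that the second is controlled uniformly in $k$, plus the analogous estimate for gradients using \eqref{eq:grad-unf}. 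The support condition $\hat w_k\chi^1_k=0$ is preserved since each $\hat w^{(j)}_k$ has it. I expect the only genuinely delicate point to be the careful handling of the boundary-layer cells so that simultaneously (a) $\hat w_k$ is compactly supported in $\Om$, (b) $\hat w_k\chi^1_k\equiv 0$, and (c) the cutoff introduces only $o(1)$ errors in both \eqref{item:stwos} and \eqref{item:stwos_grad}; everything else is routine once the model case and the diagonal argument are set up.
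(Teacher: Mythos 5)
Your construction is essentially the one in the paper: the paper approximates $w$ by maps in $C_c^\infty(\Om\times Q_0;\R^3)$, replaces them by their averages over the cells $\veps_k(t+Q)$ that are contained in $\Om$ (setting them to zero on all other cells), substitutes $z=x/\veps_k$, verifies strong two-scale convergence of the model sequences via unfolding, and concludes by Attouch's diagonalization. Your tensor-product dense class plus explicit boundary cutoff, followed by a hand-rolled diagonal argument, is the same skeleton, and your unfolding/dominated-convergence verification of the model case and the $O(\veps_k)$ bookkeeping for the error terms are fine.

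There is, however, one concrete flaw in your verification of the support condition $\hat w_k\chi^1_k=0$. By \eqref{eq:setup1}--\eqref{eq:setup2}, $\Om^0_{\veps_k}$ consists only of the cells $\veps_k(Q_0+t)$ with $t\in Z_{\veps_k}$, so every partial cell near $\p\Om$ (i.e.\ $\veps_k(Q_0+t)\cap\Om$ with $t\notin Z_{\veps_k}$) belongs to $\Om^1_{\veps_k}$. On such a cell $\psi(x/\veps_k)$ is generally nonzero on $\veps_k(\supp\psi+t)\cap\Om$, so your parenthetical claim that ``$\supp\psi\Subset Q_0$ guarantees we never touch $\Om^1_{\veps_k}$'' is false there, and a cutoff $\theta_k$ that is merely in $C_c^\infty(\Om;[0,1])$ and equal to $1$ at distance $>2\veps_k$ from $\p\Om$ does not kill these contributions --- reducing $|\hat w_k|$ is not enough, it must vanish identically on $\Om^1_{\veps_k}$. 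The fix is the one the paper builds in by fiat (its cell averages are defined to be zero on every cell not contained in $\Om$): require $\theta_k\equiv 0$ on the union of all cells $\veps_k(t+Q)$ with $t\notin Z_{\veps_k}$ that meet $\Om$, a set contained in a $C\veps_k$-neighbourhood of $\p\Om$, and $\theta_k\equiv 1$ at distance $\ge C'\veps_k$ from $\p\Om$. This is compatible with $|\nabla\theta_k|\le C/\veps_k$ and with your $O(\veps_k)$-measure error estimates, so the rest of your argument, including the diagonalization, goes through unchanged.
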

\begin{proof}
Let \(w\in L^2(\Om;W_0^{1,2}(Q_0;\sph^2))\). We approximate \(w\) by functions \(\sequence{w}{j}\subset C_c^\infty(\Om\times Q_0;\R^3)\). Then, we define 
\begin{equation}\label{eq:cubes_av}
w^j_k(x,z)
\coloneqq 
\begin{cases}
\frac{1}{\veps_k^3}\int_{\veps_k(t+Q)}w_j(\bar{x},z)\,{\dl\bar{x}}\qquad  &\text{if } x\in \veps_k(t+Q) \text{ for some } t\in \hat{Z}_k, \\
0\qquad  &\text{else}
\end{cases}
\end{equation}
by averaging on cubes whose side lengths are associated to the sequence \(\sequence{\veps}{k}\).
In the expression above, 
$\hat{Z}_k:=\{t\in \mathbb{Z}^3:\,\veps_k(t+Q)\subset \Omega\}$.

Note that since \(w_j\) has compact support in \(Q\) in the \(z\)-variable for all \(j\in \N\), the functions
\begin{equation}\label{eq:make_osc}
    v_k^j(x)\coloneqq w_k^{j}\left(x,\frac{x}{\veps_k}\right)
\end{equation}
are again elements of \(C_c^\infty (\Om)\) for all \(j,k\in \N\). Furthermore, for any fixed \(j\in \N\) the sequence \((v_k^j)_k\) strongly two-scale converges to \(w_j\) for \(k\to \infty\); and at the same time \((\veps_k \nabla v_k^j)_k\) strongly two-scale converges to \(\nabla _z w_j\). This follows from the smoothness of the functions and the mean value theorem (compare \cite[Lemma~6.1]{DaGaPa2}). Eventually, by construction, $v^j_k\chi^1_k=0$ for every $k\in \N$.

In particular, owing to Proposition \ref{prop:wts_unfold}, we find
$$\lim_{j\to +\infty}\lim_{k\to +\infty}\left(\|S_k(v^j_k)-w\|_{L^2(\R^3\times \Omega;\R^3)}+\|\nabla_z S_k(v^j_k)-\nabla_z w\|_{L^2(\R^3\times \Omega;\R^{3\times 3})}\right)=0.$$
By Attouch's diagonalization lemma \cite[Lemma 1.15 and Corollary 1.16]{Att86} we find a subsequence $(j(k))_k$ such that
$$\lim_{k\to +\infty}\left(\|S_k(v^{j(k)}_k)-w\|_{L^2(\R^3\times \Omega;\R^3)}+\|\nabla_z S_k(v^{j(k)}_k)-\nabla_z w\|_{L^2(\R^3\times \Omega;\R^{3\times 3})}\right)=0.$$
The thesis follows then by Proposition \ref{prop:wts_unfold}, setting $\hat{w}_k:=v^{j(k)}_k$ for all $k\in \N$.
\end{proof}

Let now $\tilde{m}\in W^{1,2}(\Omega;\sph^2)$ and $w\in L^2(\Omega;W^{1,2}_0(Q_0;\R^3))$ be such that
\begin{equation*}
    \tilde{m}(x)+w(x,z)\in \sph^2 
    \quad \text{for a.e. }
    (x,z)\in \Omega\times Q_0.
\end{equation*}
 Then, we might approximate \(\tilde{m}\) by functions 
 \begin{equation}
\label{eq:mtildek}
\sequence{\tilde{m}'}{k}\subset C^\infty(\Om;\R^3)\quad\text{such that}\quad\tilde{m}'_k\to \tilde{m}\quad\text{strongly in }W^{1,2}(\Omega;\R^3),
\end{equation}
 and we apply Lemma \ref{lem:strong_l2} to find a sequence \((\hat{w}_k)_k \subset C_c^\infty(\Om;\R^3)\) that fulfils \eqref{item:stwos} and \eqref{item:stwos_grad}. In particular, 
 \begin{equation}\label{eq:pre_pre_sequence}
	v_k(x):=\tilde{m}'_k(x) +\hat{w}_k(x) 
	\quad \in C^\infty(\Om;\R^3)\cap W^{1,2}(\Om;\R^3)
\end{equation}
and
 \begin{alignat*}{2}
          v_k
          &\stwos \tilde{m} + w\chi^0
          \qquad &&\text{strongly two-scale in } L^2(\Om;\R^3)\\
          \veps_k \nabla v_k
          &\stwos (\nabla_z w)\chi^0 
          \qquad &&\text{strongly two-scale in } L^2(\Om;\R^3).
 \end{alignat*}
In order to construct a recovery sequence, it would be natural to project back the functions in \eqref{eq:pre_pre_sequence} onto \(\sph^2\) by means of the map
	\begin{equation}\label{eq:projection}
            \pi:\R^3\setminus\{0\}\to \sph^2,
            \quad
		\pi(x)
		= \frac{x}{\norm{x}{}},
	\end{equation}
 which coincides with the nearest point projection on every tubular neighbourhood of \(\sph^2\). Unfortunately, in general, it is not guaranteed that the pointwise projections of the maps in \eqref{eq:pre_pre_sequence} are well defined. In particular, in principle, sets of positive measure could be mapped by $v_k$ onto the origin, so that the composition $\pi\circ v_k$ could have singularities on a set of positive measure. 
 To circumvent this possible pitfall, we adapt the strategy in \cite[Theorem 4]{BeZh88}. We prove now a slight generalization of Theorem~\ref{thm:strong_ts_approx}, adding arbitrary perturbations \(\veps\psi\), for \(\psi \in C_c^\infty(\Om;C^\infty_{\per}(Q;\R^3))\), to the approximating sequence. This additional degree of freedom will be exploited in the proof of Theorem~\ref{thm:main_gamma_conv} in Section~\ref{sec:main_gamma_proof}. Moreover, Theorem~\ref{thm:strong_ts_approx} follows from Theorem~\ref{thm:recov_mag} simply be setting \(\psi\equiv 0\).
 
\begin{theorem}\label{thm:recov_mag}
Let $\tilde{m}\in W^{1,2}(\Omega;\sph^2)$ and $w\in L^2(\Omega;W^{1,2}_0(Q_0;\R^3))$ be such that 
\begin{equation*}
    \tilde{m}(x)+w(x,z)\in \sph^2 
    \quad \text{for a.e. }
    (x,z)\in \Omega\times Q_0.
\end{equation*}  
Then, for every \(\psi\in C_c^\infty(\Om;C^\infty_{\per}(Q;\R^3))\) there exists a sequence of functions \(\sequence{m}{k}\subset W^{1,2}(\Om;\sph^2)\) such that
    \begin{alignat}{2}
        &m_k 
        \stwos \tilde{m}(x) + w(x,z)\chi^0(z)
        && \quad \text{in } L^2(\Om;\sph^2),\label{eq:first_stwos}\\
        &\veps_k \nabla m_k
        \stwos \nabla_z w(x,z)\chi^0(z)
        && \quad \text{in } L^2(\Om;\R^{3\times 3}),\label{eq:second_stwos}\\
        & \nabla m_k\chi_k^1
        \stwos \nabla \tilde{m}(x) + \nabla_z\psi(x,z)\cdot\nabla \pi(\tilde{m}(x))\chi^1(z)
        && \quad \text{in } L^2(\Om;\R^{3\times 3}).\label{eq:third_stwos}
    \end{alignat}
    Moreover, there holds
    \begin{equation*}
        m_k\in C^\infty(\Om\setminus \{a_{k_1},\dots, a_{k_l}\};\sph^2),
    \end{equation*}
    i.e., the functions \(m_k\) are smooth up to a finite number of points, where the points themselves and the number \(k_l\) depend on the index \(k\in N\).
\end{theorem}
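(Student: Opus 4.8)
The plan is to adapt the Bethuel--Zheng projection argument to the two-scale setting, working with the smooth approximants $v_k(x) = \tilde m_k'(x) + \hat w_k(x)$ from \eqref{eq:pre_pre_sequence}, now augmented by the perturbation $\veps_k \psi(x, x/\veps_k)$. The core idea: although the pointwise projection $\pi \circ v_k$ may be ill-defined where $v_k$ hits the origin, we can translate the target slightly. For $a \in \R^3$ with $|a|$ small, consider $v_k^a(x) := v_k(x) + \veps_k \psi(x, x/\veps_k) - a$, and project via $\pi$. The composition $\pi \circ v_k^a$ is smooth wherever $v_k^a \neq 0$, i.e., away from the level set $\{v_k + \veps_k\psi_{\veps_k} = a\}$. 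By Sard's lemma applied to the smooth map $v_k + \veps_k\psi_{\veps_k}$ (defined on the open set $\Omega$, minus possibly a null set, into $\R^3$), for almost every $a$ in a small ball $B_\delta(0)$ this level set is either empty or a smooth 1-manifold; a further perturbation/covering argument (as in \cite{BeZh88}) reduces it to a finite set of points. This gives the claimed structure $m_k := \pi \circ v_k^a \in C^\infty(\Omega \setminus \{a_{k_1}, \dots, a_{k_l}\}; \sph^2)$.

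The next step is to choose, for each $k$, the translation parameter $a = a_k$ so that (i) $|a_k| \to 0$ and (ii) the gradient estimates are controlled. The delicate point inherited from Bethuel--Zheng is an averaging estimate: integrating $\int_{B_\delta(0)} \int_\Omega |\nabla(\pi \circ v_k^a)|^2 \, dx \, da$ and using the co-area formula together with the bound $|\nabla \pi(y)| \le C/|y|$ on the punctured ball near $\sph^2$, one shows that for a positive-measure set of $a \in B_\delta(0)$ the energy of $m_k$ is comparable to that of $v_k$ on the relevant regions. Here the two-scale bookkeeping enters: on $\Omega^1_{\veps_k}$ (where the slow scale dominates) we need $\nabla m_k \chi_k^1$ to converge to $\nabla \tilde m + \nabla_z \psi \cdot \nabla\pi(\tilde m) \chi^1$, which is why the $\veps_k\psi_{\veps_k}$ term is present — it contributes $\veps_k \nabla(\psi_{\veps_k}) = \nabla_z\psi(x, x/\veps_k) + O(\veps_k)$ to the gradient, and passing through the derivative of $\pi$ at the limit $\tilde m$ gives exactly the stated corrector; on $\Omega^0_{\veps_k}$ the scaling $\veps_k \nabla m_k$ picks out $\nabla_z w \chi^0$. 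Since $v_k^a \to \tilde m + w\chi^0$ strongly two-scale with values a.e. on $\sph^2$ (in the limit), the projection is a small perturbation of the identity on the limit and the strong two-scale convergences \eqref{eq:first_stwos}--\eqref{eq:second_stwos} follow from the strong two-scale convergence of $v_k^a$ together with the norm convergence (using that $|m_k| = 1$ pointwise, which pins down the $L^2$ norm).

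The order I would carry this out: first fix $\psi$ and record that $v_k + \veps_k\psi_{\veps_k}$ is smooth on $\Omega$ with $\veps_k \nabla(v_k + \veps_k\psi_{\veps_k}) \stwos \nabla_z w\chi^0 + \nabla_z\psi$ and $v_k + \veps_k\psi_{\veps_k} \stwos \tilde m + w\chi^0$ strongly two-scale; second, for fixed $k$, apply Sard's lemma to obtain that for a.e. $a \in B_{\delta_k}(0)$ the set $\{v_k + \veps_k\psi_{\veps_k} = a\}$ is a finite union of points after a generic perturbation (this is where I would cite the precise statement and construction of \cite[Theorem~4]{BeZh88} verbatim, since it is purely local and scale-independent); third, run the integral-averaging estimate over $a$ to select $a_k$ with $|a_k| \to 0$ realizing near-optimal energy; fourth, verify the three two-scale convergences by combining strong two-scale convergence of $v_k^{a_k}$, the Lipschitz-type control on $\pi$ away from $0$, dominated convergence after unfolding (Proposition~\ref{prop:wts_unfold}), and the norm-matching argument for strong convergence. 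The main obstacle I anticipate is the gradient estimate on $\Omega^1_{\veps_k}$: one must ensure that the singular points $a_{k_i}$ — which a priori could cluster and carry $O(1)$ Dirichlet energy each in the Bethuel--Zheng construction — do not spoil the \emph{weak} two-scale limit in \eqref{eq:third_stwos}. In the original proof the energy near the singular points is controlled but not driven to zero; here we need that their contribution, measured against test functions $\psi(x, x/\veps_k)$, vanishes in the limit. This should follow because the number of singular points stays finite for each $k$ while the test functions are bounded, so the defect concentrates on a set of measure $O(\veps_k^3 \cdot k_l)$ — but making this rigorous, and reconciling it with the simultaneous requirement $|a_k| \to 0$ from the averaging estimate, is the technical heart of the argument.
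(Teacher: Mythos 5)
Your overall strategy is the right one and matches the paper's up to the projection step: build the unconstrained smooth approximants $v_k=\tilde m'_k+\veps_k\psi(\cdot,\cdot/\veps_k)+\hat w_k$, use Sard's lemma to make a generic shifted radial projection well defined, and pass to the limit by unfolding. The gap is in how you implement the projection. You propose the single global map $m_k=\pi(v_k-a_k)$ and correctly observe that identifying the limits forces $|a_k|\to 0$; but then the Bethuel--Zheng averaging estimate, which is the only source of the $W^{1,2}$ bound near the singular points, degenerates. Concretely, Fubini gives $\int_{B_{\delta_k}(0)}|v_k(x)-a|^{-2}\,\dl a\le C\delta_k$, so the average over $a\in B_{\delta_k}(0)$ of $\int_\Omega|\nabla v_k|^2|v_k-a|^{-2}\,\dl x$ is only bounded by $C\delta_k^{-2}\|\nabla v_k\|_{L^2}^2$: the constant in your ``energy of $m_k$ comparable to that of $v_k$'' blows up as $\delta_k\to0$. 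This is exactly the tension you flag in your last sentence and leave unresolved; it could perhaps be repaired by letting $\delta_k\to0$ slowly enough relative to the (unquantified) equiintegrability rate of $S_k(\veps_k\nabla v_k)$ on the bad set, but as written the argument for \eqref{eq:second_stwos} and \eqref{eq:third_stwos} does not close. (A minor slip: for a regular value $a$ of $v_k:\Omega\subset\R^3\to\R^3$ the preimage is $0$-dimensional, not a $1$-manifold.)

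The paper sidesteps this entirely by never requiring the shift to vanish. It fixes $\delta$, sets $V_{k,\delta}=v_k^{-1}(B_{1-\delta})$, and takes $a_k$ in the \emph{fixed} ball $B_{1/4}$ so that the averaged estimate holds with a $k$-independent constant, cf.\ \eqref{eq:key_est_proj}. The shifted projection is applied only on $V_{k,\delta}$ and is post-composed with $(\pi^\delta_{a_k}|_{\sph^2_{1-\delta}})^{-1}$, which makes it the identity on the interface $v_k^{-1}(\sph^2_{1-\delta})$, so it glues in $W^{1,2}$ with the plain projection $\pi\circ v_k$ used on the complement. The limit is then unaffected by the non-vanishing shift because the unfolded image $W_{k,\delta}$ of $V_{k,\delta}$ is contained in $A_{k,\delta}=\{|S_k(v_k)-(\tilde m+w\chi^0)|\ge\delta\}$, whose measure tends to zero: on $W_{k,\delta}$ the $L^2$ discrepancy is controlled by the uniform bound $|m_k|=1$, and the gradient discrepancy by \eqref{eq:key_est_proj} combined with the $2$-equiintegrability of $S_k(\veps_k\nabla v_k)$ (resp.\ of $\nabla\tilde m'_k$ for \eqref{eq:third_stwos}, using that $\hat w_k\chi^1_k=0$). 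I would recommend you adopt this two-region construction rather than trying to force $|a_k|\to0$.
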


\begin{proof}
For $\tilde{m}\in W^{1,2}(\Omega;\sph^2)$ and $w\in L^2(\Omega;W^{1,2}_0(Q_0;\R^3))$ as in the thesis, we choose approximating sequences \(\sequence{\tilde{m}'}{k}\subset C^\infty(\Om;\R^3)\) and \(\sequence{\hat{w}}{k}\subset C_c^\infty(\Om;\R^3)\) with the properties in \eqref{eq:mtildek} and in Lemma~\ref{lem:strong_l2}. For \(\psi\in C_c^\infty(\Om;C^\infty_{\per}(Q;\R^3))\) we define (modifying \eqref{eq:pre_pre_sequence})
 \begin{equation}\label{eq:pre_sequence}
	v_k(x):=\tilde{m}'_k(x) + \veps_k\psi(x,x/\veps_k) + \hat{w}_k(x) 
	\quad \in C^\infty(\Om;\R^3)\cap W^{1,2}(\Om;\R^3).
\end{equation}
There holds
 \begin{alignat}{2}
          v_k
          &\stwos \tilde{m} + w\chi^0
          \qquad &&\text{strongly two-scale in } L^2(\Om;\R^3)\label{eq:unproj_stwos}\\
          \veps_k \nabla v_k
          &\stwos (\nabla_z w)\chi^0 
          \qquad &&\text{strongly two-scale in } L^2(\Om;\R^3).\label{eq:unproj_stwos_grad}
 \end{alignat}
 Note that we can always assume that \(S_k(v_k)\) converges a.e. to \(\tilde{m} + w\chi^0\) in \(\R^3\times Q\) by extracting a proper subsequence.

We follow now the strategy outlined in \cite[Theorem 4]{BeZh88}. For $\delta\in (0,1)$, let $\sph^2_{1-\delta}$ and $B_{1-\delta}$ be the sphere and ball centered in the origin and with radius $1-\delta$.
 
First, observe that by Sard's theorem and \cite[Theorem 5.22]{Lee03} the preimages \(F_{k,\delta}\coloneqq v_k^{-1}(\sph_{1-\delta}^2)\) in \(\Om\) are  submanifolds of co-dimension one in \(\Om\) for almost every \(0<\delta<1/2\). They are also the topological boundary in \(\Om\) of the sets \(V_{k,\delta}\coloneqq v_k^{-1}(B_{1-\delta})\) for \(B_{1-\delta}=\{y\in \R^3:\, y\le 1-\delta\}\). In what follows, we denote by $D$ the set of $\delta\in (0,1/2)$ such that the above properties hold for all \(k\in \N\) and we fix $\delta\in D$.
 
In view of \cite[Theorem~4]{BeZh88} (see also \cite[Lemma~2.3]{BPV14} and \cite{GaHaPa23}), for every \(k\in \N\) there exists \(a_k\in B_{1/4}\) such that, defining the shifted projections
	\begin{equation}
		\pi^\delta(x)
		\coloneqq \pi(x)(1-\delta)
		= \frac{x}{\norm{x}{}}(1-\delta),
		\quad 
		\pi^{\delta}_{a_k}(x)
		\coloneqq \pi^{\delta}(x-a_k)
	\end{equation}
	and 
	\begin{equation}
 \label{eq:def-hat}
		\hat{\pi}^{\delta}_{a_k}
		\coloneqq (\pi^{\delta}_{a_k}|_{\sph^2_{1-\delta}})^{-1}\circ \pi^{\delta}_{a_k},
	\end{equation}
	we have 
	\begin{equation}\label{eq:sob_prop}
	\hat{\pi}^{\delta}_{a_k}\circ v_k
	\quad \in W^{1,2}(V_{k,\delta};\sph^2_{1-\delta})
	\end{equation}
	with 
	\begin{equation}\label{eq:key_est_proj}
		\norm{\nabla (\hat{\pi}^{\delta}_{a_k}\circ v_k)}{L^2(V_{k,\delta};\sph^2)}
		\le C \norm{\nabla v_k}{L^2(V_{k,\delta};\R^3)}.
	\end{equation}
        The important detail in the above construction lies in the fact that, again by Sard's theorem, for almost every \(a\in B_{1/4}\) the preimages \(v_k^{-1}(a)\) are points in \(V_{k,\delta}\subset \Om\) (of co-dimension \(3\)). Estimate \eqref{eq:key_est_proj} only holds true because the exponent \(p=2\) is smaller than the space dimension of the ambient Euclidean space \(\R^3\) of \(\sph^2\).
	Now, we set 
	\begin{equation}
		m_k
		\coloneqq 
		\begin{cases}
			\pi\circ \hat{\pi}^{\delta}_{a_k}\circ v_k\quad &\text{for } x\in V_{k,\delta} \\
			\pi\circ v_k\quad &\text{for } x\in \Om\setminus \overline{V_{k,\delta} }.
		\end{cases}
	\end{equation}
	Note that by \eqref{eq:def-hat} there holds \((m_k)_k\subset W^{1,2}(\Om;\sph^2)\). 
	We apply now the unfolding operator to prove the strong two-scale convergences in \eqref{eq:first_stwos} and \eqref{eq:second_stwos}.
    To this end, define 
    \begin{equation}
	W_{k,\delta}
	\coloneqq \{(x,z)\in \R^3\times Q:\, \veps_k\lfloor x/\veps_k\rfloor + \veps_k z\in V_{k,\delta}\}
	\end{equation}
    Recall that away from zero -- in particular, outside of \(B_{1-\delta}\) -- the nearest point projection \(\pi\) is a smooth function. Thus, outside $W_{k,\delta}$, arguing by unfolding, we find 
    \begin{align}
    \label{eq:926}
        \begin{split}
        &\int_{(\R^3\times Q)\setminus \ol{W_{k,\delta}}}
        |S_k(m_k) - [\tilde{m}+w\chi^0]|^2
        \,{\dl z}{\dl x}\\
        &=\int_{(\R^3\times Q)\setminus \ol{W_{k,\delta}}}
        |S_k(\pi(v_k)) - [\tilde{m}+w\chi^0]|^2
        \,{\dl z}{\dl x}\\
        &= \int_{(\R^3\times Q)\setminus \ol{W_{k,\delta}}}
        |\pi(S_k(v_k)) - \pi(\tilde{m}+w\chi^0(z))|^2 \,{\dl z}{\dl x}\\
        &\le \norm{\nabla \pi}{L^\infty(B_1\setminus {\ol{B_{1-\delta}}})}^2 \int_{\R^3}\int_Q
        |S_k(v_k) - [\tilde{m}+w\chi^0(z)]|^2 \,{\dl z}{\dl x},
        \end{split}
    \end{align}
    where the right-hand side converges to zero owing to \eqref{eq:unproj_stwos}. We used in the second equality that the unfolding operator commutes with the projection \(\pi\), since it has no influence on the values of a map.

    The same arguments apply to the sequence \((\veps_k \nabla m_k)_k\), upon noticing that
    \begin{equation*}
        \nabla_z w(x,z)
        = \nabla_z (\pi\circ (\tilde m(x)+w(x,z)))
        = \nabla_z w(x,z) 
        \cdot \nabla \pi(\tilde m(x)+w(x,z))
    \end{equation*}
    for almost every \( (x,z)\in \Om\times Q_0\),
    because \(\tilde{m}+w\in \sph^2\) for almost every \((x,z)\in \Om\times Q_0\). Indeed, we find
    \begin{align}\label{eq:321}
        \begin{split}
        &\int_{(\R^3\times Q)\setminus \ol{W_{k,\delta}}} |S_k(\veps_k\nabla m_k)-\nabla_z(w\chi^0)|^2\,{\dl z}{\dl x}\\
        &\quad=\int_{(\R^3\times Q)\setminus \ol{W_{k,\delta}}} |S_k(\veps_k\nabla(\pi(v_k)))-\nabla_z(w\chi^0)\cdot\nabla\pi(\tilde{m}+w\chi^0)|^2\,{\dl z}{\dl x}\\
        &\quad =\int_{(\R^3\times Q)\setminus \ol{W_{k,\delta}}} |S_k(\veps_k\nabla v_k\nabla \pi(v_k)))-\nabla_z(w\chi^0)\cdot\nabla\pi(\tilde{m}+w\chi^0)|^2\,{\dl z}{\dl x}\\
        &\quad =\int_{(\R^3\times Q)\setminus \ol{W_{k,\delta}}} |S_k(\veps_k\nabla v_k)\cdot S_k(\nabla \pi(v_k)))-\nabla_z(w\chi^0)\cdot\nabla\pi(\tilde{m}+w\chi^0)|^2\,{\dl z}{\dl x}\\
        &\quad\leq \|\nabla \pi\|_{L^{\infty}(B_1\setminus\ol{B_{1-\delta}})}^2\int_{(\R^3\times Q)\setminus \ol{W_{k,\delta}}} |S_k(\veps_k\nabla v_k)-\nabla_z w\chi^0|^2\,{\dl z}{\dl x}\\
        &\qquad+\int_{(\R^3\times Q)\setminus \ol{W_{k,\delta}}} |\nabla_z w\chi^0\cdot( S_k(\nabla \pi(v_k))-\nabla\pi(\tilde{m}+w\chi^0))|^2\,{\dl z}{\dl x}.
        \end{split}
    \end{align}
    The first integral on the right-hand side of \eqref{eq:321} vanishes due to \eqref{eq:unproj_stwos_grad} and the second one by virtue of Severini-Egorov Theorem. In fact, according to the latter result, using that \(S_k(v_k)\) converges almost everywhere to \(\tilde{m} + w\chi^0\) and that \(|S_k(v_k)-(\tilde{m} + w\chi^0)|\le 2\) almost everywhere in \(\R^3\times Q\) , for every \(\mu>0\)  we have the estimate
    \begin{align*}
        &\int_{(\R^3\times Q)\setminus \ol{W_{k,\delta}}} |\nabla_z w\chi^0\cdot( S_k(\nabla \pi(v_k))-\nabla\pi(\tilde{m}+w\chi^0))|^2\,{\dl z}{\dl x}\\
        &\quad\le \norm{\nabla _z w}{L^2((\R^3\times Q)}^2
        \left(\norm{\nabla^2 \pi}{L^\infty(B_1\setminus\ol{B_{1-\delta}})}\mu
        +2|E_\mu|\norm{\nabla^2 \pi}{L^\infty(B_1\setminus\ol{B_{1-\delta}})}\right)
    \end{align*}
    for \(k\in \N\) sufficiently large and for some open subset \(E_\mu\subset (\R^3\times Q)\setminus \ol{W_{k,\delta}}\) such that \(|E_\mu|<\mu\). Letting \(\mu\to 0\) and \(k\to \infty\) we validate the claim of the theorem outside of \(W_{k,\delta}\).

 It remains to prove that the contributions from \(S_{k}(\pi\circ\hat{\pi}^{\delta}_{a_k}\circ v_k)\) and \(\nabla _z S_{k}(\pi\circ\hat{\pi}^{\delta}_{a_k}\circ v_k)\) on the set \(W_{k,\delta}\)
 become infinitesimally small in the limit. 
 We set
	\begin{equation}
		A_{k,\delta}
		\coloneqq\{(x,z)\in \R^3\times Q:\, |S_{k}(v_k)(x,z)-[\tilde{m}(x) + w(x,z)\chi^0(z)]|\ge \delta\}.
	\end{equation}
	From the strong \(L^2\)-convergence of \((S_k(v_k))_k\) in \eqref{eq:unproj_stwos} it follows that the Lebesgue measure of \(A_{k,\delta}\) converges to zero for \(k\to \infty\). Additionally,
 since $S_k(v_k)\in B_{1-\delta}$ almost everywhere on $W_{k,\delta}$ and $\tilde{m}+w\in \sph^2$ almost everywhere on $\Omega\times Q$, it follows that \(W_{k,\delta}\subset A_{k,\delta}\). Then, from the property that \(S_{k}(\pi\circ\hat{\pi}^{\delta}_{a_k}\circ v_k)\) takes values in \(\sph^2\), we infer that
	\begin{equation}
 \label{eq:extra4}
		\int_{W_{k,\delta}} |S_{k}(\pi\circ\hat{\pi}^{\delta}_{a_k}\circ v_k)(x,z)-[\tilde{m}(x) + w(x,z)\chi^0(z)|^2\,{\dl z}{\dl x}
		\le 4 |	A_{k,\delta}|,
	\end{equation} 
	which vanishes in the limit \(k\to \infty\). Turning to the gradients \(\nabla_z S_{k}(\pi\circ\hat{\pi}^{\delta}_{a_k}\circ v_k)\), we first infer from \eqref{eq:unproj_stwos_grad} the equiintegrability of the sequence
  \((S_k(\veps_k\nabla v_k))_k\). 
  From the smoothness of $\pi$ on $\sph^2_{1-\delta}$, and by the fact that the unfolding operator is an isometry, inequality \eqref{eq:key_est_proj} translates into
  \begin{align*}
      &\int_{W_{k,\delta}} |S_k(\nabla (\pi\circ\hat{\pi}^{\delta}_{a_k}\circ v_k))|^2\,{\dl z}{\dl x}=\int_{V_{k,\delta}} |\nabla (\pi\circ\hat{\pi}^{\delta}_{a_k}\circ v_k)|^2\,{\dl x}\\
      &\quad\le \|\nabla\pi\|_{L^{\infty}(\sph^2_{1-\delta})}\int_{V_{k,\delta}} |\nabla (\hat{\pi}^{\delta}_{a_k}\circ v_k)|^2\,{\dl x}\leq C\int_{V_{k,\delta}} |\nabla  v_k|^2\,{\dl x}.
  \end{align*}
Thus, by \eqref{eq:grad-unf},
\begin{align}
 &\nonumber\int_{W_{k,\delta}} |\nabla_zS_k(\pi\circ\hat{\pi}^{\delta}_{a_k}\circ v_k)|^2\,{\dl z}{\dl x}
      = \int_{W_{k,\delta}} |S_k(\veps_k\nabla(\pi\circ\hat{\pi}^{\delta}_{a_k}\circ v_k))|^2\,{\dl z}{\dl x}
      \le C\int_{V_{k,\delta}} |\veps\nabla v_k|^2\,{\dl x}\\
      &\quad=C\int_{W_{k,\delta}} |S_k(\veps\nabla v_k)|^2\,{\dl z}{\dl x}\to 0,\label{eq:extra1}
\end{align}
owing to \eqref{eq:unproj_stwos_grad}, and the fact that
\begin{equation}
    \label{eq:num}
    |W_{k,\delta}|\leq |A_{k,\delta}|\to 0
\end{equation}
as $k\to +\infty$. In particular, by \eqref{eq:extra1} and \eqref{eq:num}, we also infer that
\begin{equation}
    \label{eq:extra2}
    \|\nabla_z S_k(m_k)-\nabla_z w\chi^0\|_{L^2(W_{k,\delta};\R^{3\times 3})}\to 0
\end{equation}
as $k\to+\infty$. By \eqref{eq:926}, \eqref{eq:321}, \eqref{eq:extra4}, and \eqref{eq:extra2} we obtain \eqref{eq:first_stwos} and \eqref{eq:second_stwos}.

  It remains to identify the two-scale limit of \((\nabla m_k\chi_k^1)_k\) and to prove \eqref{eq:third_stwos}.
    Again, we split the integral over \(\nabla m_k\chi_k^1\) into the part on \(V_{k,\delta}\) and the one on the complement in \(\Om\). For the first contribution, since $\hat{w}_k\chi^1_k=0$ for every $k$, cf. Lemma \ref{lem:strong_l2}, we infer that $v_k(x)=\tilde{m}_k'(x)+\veps_k\psi\left(x,\frac{x}{\veps_k}\right)$ for almost every $x\in \Omega_k^1$. Thus, we find the estimate
    \begin{align*}
        &\|\nabla m_k \chi_k^1\|_{L^2(V_{k,\delta};\R^3)}\leq C\|\nabla \pi\|_{L^{\infty}(B_1\setminus B_{1-\delta})}\|\nabla V_{k,\delta}\chi_k^1\|_{L^2(V_{k,\delta};\R^3)}\\
        &\quad\leq C\|\nabla \pi\|_{L^{\infty}(B_1\setminus B_{1-\delta})}\left(\|\nabla\tilde{m}'_k\|_{L^2(V_{k,\delta};\R^{3\times 3})}+\veps_k\|\nabla_x \psi\|_{L^2(V_{k,\delta};\R^{3\times 3})}+\|\nabla_z \psi\|_{L^2(V_{k,\delta};\R^{3\times 3})}\right).
    \end{align*}
    From \eqref{eq:mtildek} and the fact that $|V_{k,\delta}|\to 0$ we hence obtain that
    $$\|\nabla m_k \chi_k^1\|_{L^2(V_{k,\delta};\R^3)}\to 0$$
    as $k\to+\infty$.

    Let us now set \(\hat{m}_k(x)\coloneqq m_k\chi_k^1(x)\). Then,
    \begin{equation}
        \hat{m}_k(x)
        = \pi(\tilde{m}'_k(x) + \veps_k\psi(x,x/\veps_k))
        \qquad \text{for a.\,e. }  x\in \Om\setminus\ol{V_{k,\delta}}.
    \end{equation} 
    Consequently, on \(\Omega\setminus V_{k,\delta}\) there holds 
    \begin{equation*}
        \nabla \hat{m}_k(x)
        = \left(\nabla \tilde{m}'_k(x) + \veps_k\nabla_x\psi(x,x/\veps_k) + \nabla_z\psi(x,x/\veps_k)\right)
        \cdot \nabla \pi( \hat{m}_k(x))\chi_k^1(x).
    \end{equation*}
    Recall now that the right-hand side in \eqref{eq:third_stwos} can be rewritten as
    \begin{equation*}
        \nabla \tilde{m}(x) + \nabla_z\psi(x,z)\cdot\nabla \pi(\tilde{m}(x))\chi^1(z)
        = (\nabla \tilde{m}(x) + \nabla_z\psi(x,z))\cdot\nabla \pi(\tilde{m}(x))\chi^1(z),
    \end{equation*}
    since \(\tilde{m}\in \sph^2\) for a.e. \(x\in \Om\).
    Employing once more the smoothness of \(\pi\) outside of \(B_{1-\delta}\), the claim follows then from 
    \begin{equation*}
        \nabla \tilde{m}'_k(x) + \veps_k\nabla_x\psi(x,x/\veps_k) + \nabla_z\psi(x,x/\veps_k)
        \stwos \nabla \tilde{m}(x) + \nabla_z\psi(x,z)
        \qquad \text{in } L^2(\Om;\R^{3\times 3}),
    \end{equation*}
    which results from the strong convergence of the approximations \(\sequence{\tilde{m}'}{k}\) towards \(\tilde{m}\) in \(W^{1,2}(\Om;\R^3)\) and the smoothness of \(\psi\in C_c^\infty(\Om;C_\per^\infty(Q;\R^3))\).

    The statement that the elements \(m_k\) of the approximating sequence are smooth up to a finite number of points, whose position and number depends on the index \(k\in \N\) is a direct consequence of the construction in \eqref{eq:def-hat}-\eqref{eq:sob_prop} (see \cite[Theorem~4]{BeZh88}). 
\end{proof}
\begin{remark}
    The proof of Theorem~\ref{thm:recov_mag} can be split into the following two steps: (i) first, using Lemma~\ref{lem:strong_l2}, we construct a sequence approximating \(\tilde{m}+w\) strongly two-scale (and similarly for the gradient), where, however, we forget about the \(\sph^2\)-constraint; (ii) we then use suitable projections in the target space to restore the constraint. In theory, one could also try to first find approximations, smooth up to finitely many points, of \(\tilde{m}+w\) (and its gradient) in \(L^p(\Om \times Q;\sph^2)\) with the help of the ideas in \cite[Theorem~4]{BeZh88}, and then to build strong two-scale approximations in the lines of Lemma~\ref{lem:strong_l2}. In fact, applying the approximation scheme from \cite[Theorem~4]{BeZh88} to a target function in \(L^2(\Om\times Q;\sph^2)\) is possible in theory, but there is a catch with following Lemma~\ref{lem:strong_l2} and it is two-fold: First, through the averaging in \eqref{eq:cubes_av} we fall again out of \(\sph^2\) in the target space; and second, introducing the fast oscillations in \eqref{eq:make_osc} might lead to ill-defined functions. The reason for the latter point is the following: In applying \cite[Theorem~4]{BeZh88} to a function in \(L^2(\Om\times Q;\sph^2)\) with smooth approximations in \(C^\infty(\Om\times Q;\sph^2)\), we need to choose a sequence of shifts \(\sequence{a}{k}\) for the projections such that the analog of \eqref{eq:sob_prop} is fulfilled. The preimages of the singularities of the shifted projections would have co-dimension \(3\) in \(\Om\times Q\), and nothing tells us a priori that they lie completely in \(\Om\), i.e., that, when projected onto \(Q\), they have co-dimension \(3\) in \(Q\) (in particular, if this fails, they are not of Carathéodeory type). But if we mimic \eqref{eq:make_osc} for such functions, we cannot assure that the resulting maps are well-defined. Hence, the path chosen for Theorem~\ref{thm:recov_mag} appears to the authors to be the only feasible option to construct the sought strong two-scale approximations. 
    \end{remark}
\section{Proof of theorem \ref{thm:main_gamma_conv}}\label{sec:main_gamma_proof}

\subsection{Splitting and analysis of the energy on the matrix component}\label{subsec:splitting}
We define for \(m\in W^{1,2}(\Om;\sph^2)\) the energies
\begin{equation}\label{eq:minus_self}
    \mathcal{F}_\veps(u,m)
    := \mathcal{G}_\veps(u,m) 
    - \int_{\R^3}\mathcal{W}(m)\,{\dl x},
\end{equation}
as well as
\begin{align}
    \begin{split}\label{eq:soft_energy}
        \mathcal{F}^0_\veps(m)
        &\coloneqq\frac{1}{2}\int_{\Omzeps} \veps ^2|\nabla m|^2\, {\dl x},
    \end{split}\\
    \begin{split}\label{eq:stiff_energy}
        \mathcal{F}^1_\veps(m)
        &\coloneqq \frac{1}{2}\int_{\Omoeps} |\nabla m|^2\, {\dl x}.
    \end{split}
\end{align}

The next proposition will allow to separately study the \(\Gamma\)-limits of \(\sequence{\mathcal{F}^0}{\veps}\) and \(\sequence{\mathcal{F}^1}{\veps}\).

\begin{lemma}[Splitting]\label{prop:splitting}
    For \((\veps_k)_k\) an infinitesimal sequence let \(\sequence{m}{k}\subset W^{1,2}(\Om;\sph^2)\) converge to some \((\tilde{m},w)\in W^{1,2}(\Om;\R^3)\times L^2(\Om;W_0^{1,2}(Q_0;\R^3))\) in the high-contrast sense of Definition~\ref{def:conv_for_gamma}.
    Let further \((\tilde{m}_k)_k\) be the extensions in \(W^{1,2}(\Om;\sph^2)\) provided by Proposition~\ref{thm:target_ext}. We set \(w_k:=m_k-\tilde{m}_k\). Then
    \begin{align}
        \begin{split}\label{eq:liminf_superadd}
        \liminf_{k\to +\infty}
        \mathcal{F}^0_\veps(w_k) + 
        \liminf_{k\to +\infty}
        \mathcal{F}^1_\veps(\tilde{m}_k)
        \le \liminf_{k\to +\infty}
        \mathcal{F}_\veps(m_k),
        \end{split}\\
        \begin{split}\label{eq:limsup_subadd}
         \limsup_{k\to +\infty}
        \mathcal{F}_\veps(m_k)
        \le \limsup_{k\to +\infty}
        \mathcal{F}^0_\veps(w_k) +
        \limsup_{k\to +\infty}
        \mathcal{F}^1_\veps(\tilde{m}_k).  
        \end{split}
    \end{align}
\end{lemma}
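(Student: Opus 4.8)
The plan is to reduce both inequalities to the elementary superadditivity of $\liminf$ and subadditivity of $\limsup$, once one knows that $\mathcal{F}^0_{\veps_k}(w_k)$ and $\mathcal{F}^1_{\veps_k}(\tilde m_k)$ add up to $\mathcal{F}_{\veps_k}(m_k)$ up to an infinitesimal error. First I would record that, after cancelling the stray-field term, $\mathcal{F}_{\veps_k}=\mathcal{F}^0_{\veps_k}+\mathcal{F}^1_{\veps_k}$ on $W^{1,2}(\Om;\sph^2)$, so that $\mathcal{F}_{\veps_k}(m_k)=\mathcal{F}^0_{\veps_k}(m_k)+\mathcal{F}^1_{\veps_k}(m_k)$. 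Since $\tilde m_k=m_k$ a.e.\ in $\Omoeps$ by Proposition~\ref{thm:target_ext}, the stiff contributions agree exactly, $\mathcal{F}^1_{\veps_k}(\tilde m_k)=\mathcal{F}^1_{\veps_k}(m_k)$, and only the soft contributions have to be compared: it suffices to show $\mathcal{F}^0_{\veps_k}(w_k)=\mathcal{F}^0_{\veps_k}(m_k)+o(1)$.

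For the latter, since $\nabla w_k=\nabla m_k-\nabla\tilde m_k$ in $\Omzeps$, the reverse triangle inequality in $L^2(\Omzeps;\R^{3\times3})$ applied to $\veps_k\nabla(\cdot)$ gives
\begin{equation*}
\Bigl|\sqrt{2\mathcal{F}^0_{\veps_k}(w_k)}-\sqrt{2\mathcal{F}^0_{\veps_k}(m_k)}\Bigr|
\le\Bigl(\int_{\Omzeps}\veps_k^2|\nabla\tilde m_k|^2\,{\dl x}\Bigr)^{1/2}
\le\veps_k\,C\,\norm{\nabla m_k}{L^2(\Omoeps;\R^{3\times3})}
=\veps_k\,C\,\sqrt{2\mathcal{F}^1_{\veps_k}(m_k)},
\end{equation*}
with $C$ the extension constant of Proposition~\ref{thm:target_ext}. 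The point exploited here is that the constraint-preserving extension only controls $\norm{\nabla m_k}{L^2(\Omoeps)}$ (a bounded, not small, quantity), but enters $\mathcal{F}^0_{\veps_k}$ weighted by $\veps_k^2$, which makes it negligible in the limit.

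The remaining work is the bookkeeping of which quantities stay bounded, since the high-contrast convergence of Definition~\ref{def:conv_for_gamma} does not by itself bound the energies. For \eqref{eq:liminf_superadd} I would suppose the right-hand side finite (otherwise nothing is to prove) and pass to a subsequence realising $\liminf_k\mathcal{F}_{\veps_k}(m_k)$ along which $\mathcal{F}_{\veps_k}(m_k)\le C$; then $\mathcal{F}^0_{\veps_k}(m_k),\mathcal{F}^1_{\veps_k}(m_k)$ are bounded, the displayed estimate upgrades to $\mathcal{F}^0_{\veps_k}(w_k)=\mathcal{F}^0_{\veps_k}(m_k)+o(1)$, hence $\mathcal{F}^0_{\veps_k}(w_k)+\mathcal{F}^1_{\veps_k}(\tilde m_k)=\mathcal{F}_{\veps_k}(m_k)+o(1)$ along that subsequence, and $\liminf(a_k+b_k)\ge\liminf a_k+\liminf b_k$ concludes. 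For \eqref{eq:limsup_subadd} I would suppose the right-hand side finite, so that $\limsup_k\mathcal{F}^1_{\veps_k}(\tilde m_k)<\infty$ and $\limsup_k\mathcal{F}^0_{\veps_k}(w_k)<\infty$; the former bounds $\mathcal{F}^1_{\veps_k}(m_k)$, and then the displayed estimate forces $\mathcal{F}^0_{\veps_k}(m_k)$ to be bounded as well, so again $\mathcal{F}_{\veps_k}(m_k)=\mathcal{F}^0_{\veps_k}(w_k)+\mathcal{F}^1_{\veps_k}(\tilde m_k)+o(1)$, and $\limsup(a_k+b_k)\le\limsup a_k+\limsup b_k$ finishes.

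No deep obstacle is expected; the only delicate point is this last separation into the "trivially infinite" and "finite" cases, so that the $o(1)$ error may legitimately be discarded, together with the structural observation that the merely bounded gradient of the saturation-preserving extension is suppressed by the $\veps_k^2$ scaling of the soft phase.
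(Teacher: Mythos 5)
Your proof is correct and follows essentially the same route as the paper: both reduce the claim to showing that the soft-phase residual $\mathcal{F}^0_{\veps_k}(m_k)-\mathcal{F}^0_{\veps_k}(w_k)$ vanishes because $\nabla\tilde m_k$ is only bounded in $L^2$ but enters the soft energy weighted by $\veps_k^2$, and then conclude by super-/subadditivity of $\liminf$ and $\limsup$. The only differences are cosmetic --- the paper expands the square and uses Cauchy--Schwarz where you use the reverse triangle inequality --- and your explicit case analysis on the finiteness of the two sides is, if anything, slightly more careful than the paper's blanket appeal to uniform boundedness of $\|\veps_k\nabla m_k\|_{L^2}$.
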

\begin{proof}
    A direct computation shows that
    \begin{align*}
        \mathcal{F}_\veps(m_k)
        = \mathcal{F}^0_\veps(m_k)
        + \mathcal{F}^1_\veps(m_k)
        = \mathcal{F}^0_\veps(w_k)
        + \mathcal{F}^1_\veps(\tilde{m}_k)
        + \mathcal{R}_k
    \end{align*}
    with the residual term
    \begin{equation*}
        \mathcal{R}_k
        = \mathcal{F}^0_\veps(m_k)
        - \mathcal{F}^0_\veps(w_k)
        = \frac{1}{2}\int_{\Omzeps}\veps^2\left(|\nabla m_k|^2-|\nabla w_k|^2 \right)\,{\dl x}.
    \end{equation*}
    Hence, to conclude the proof we only need to show that \(\mathcal{R}_k\to 0\) for \(k\to \infty\). Then (\ref{eq:liminf_superadd}) and (\ref{eq:limsup_subadd}) will follow by super- and subadditivity of \(\limsup\) and \(\liminf\). We compute with Hölder's inequality that
    \begin{align*}
        |\mathcal{R}_k|
        &\le \int_{\Omzeps}\veps^2 |\nabla m_k:\nabla \tilde{m}_k|\,{\dl x}
        +\frac{1}{2}\int_{\Omzeps}\veps^2|\nabla \tilde{m}_k|^2 \,{\dl x}\\
        &\le \left(\veps \norm{\veps \nabla m_k}{L^2(\Om;\R^{3\times 3})}+\frac{1}{2}\veps ^2\right)
        \norm{ \nabla \tilde{m}_k}{L^2(\Om;\R^{3\times 3})}.
    \end{align*}
    This finishes the proof, because \(\norm{\veps \nabla m_k}{L^2}\) and \(\norm{ \nabla \tilde{m}_k}{L^2}\) are uniformly bounded following the assumption that \(\sequence{m}{k}\) converges in the high-contrast sense to \((\tilde{m},w)\), and by the bounds of the extension operator from Proposition~\ref{thm:target_ext}. 
\end{proof}

By using the special extension operator in Proposition \ref{thm:target_ext} that preserves the saturation constraint of the magnetizations, we narrowed down the analysis of the energies \(\sequence{\mathcal{F}^1}{\veps}\) on the matrix part to the case where again the magnetizations lie in \(W^{1,2}(\Om;\sph^2)\). Actually, we can obtain the limit behaviour as a special version of a more general homogenization result. Define the functional 
\begin{equation}\label{eq:stiff_energy_func}
\mathcal{I}_\veps(m)
:= \frac{1}{2}\int_{\Om} a_{\veps}(x)|\nabla m|^2\, {\dl x}
\end{equation}
for \(m\in  W^{1,2}(\Om;\sph^2)\), where we set $a_{\veps}(x)\coloneqq a(x/\veps)\coloneqq\chi_{(\veps_k E)\cap\Om}(x)$.
The asymptotic behavior of the functionals $\mathcal{I}_{\veps}$ is characterized by the following \(\Gamma\)-convergence result. Here it will be sufficient to use in the construction of approximate recovery sequences the classical projection \(\pi\) onto \(\sph^2\) as defined in \eqref{eq:projection}. 

\begin{proposition}\label{thm:stiff_hom}
     The family \((\mathcal{I}_\veps)_\veps\) defined in \eqref{eq:stiff_energy_func} \(\Gamma\)-converges with respect to the weak \(W^{1,2}\)-topology in \(W^{1,2}(\Om;\sph^2)\) to the limit energy functional 
    \begin{align}\label{eq:limit_functional}
    \begin{split}
        \mathcal{I}(u,m)
         :=\int_{\Om}f_{\hom}(m,\nabla m)\, {\dl x},
        \end{split}
    \end{align}
    where the homogenized energy density \(f_{\hom}\) satisfies the cell formula
    \begin{equation}\label{eq:g_hom}
        f_{\hom}(s,\xi)
        = 
        \inf_{\varphi \in W^{1,2}_{\per}(Q;T_s\sph^2)}
        \frac{1}{2}\int_{Q_1} |\xi+\nabla \varphi(z)|^2\, {\dl z}.
    \end{equation}
    \end{proposition}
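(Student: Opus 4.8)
The plan is to prove separately the $\Gamma$-$\liminf$ and the $\Gamma$-$\limsup$ inequality with respect to the weak $W^{1,2}$-topology, following the homogenization scheme for manifold-valued Sobolev maps of \cite{BaMi10} and using the unfolding calculus of Section~\ref{sec:ts}. A preliminary observation: on $\Om$ one has $a_\veps(x)=\chi_{(\veps E)\cap\Om}(x)=\chi_E(x/\veps)\chi_\Om(x)$, so $a_\veps$ is (the restriction of) the $Q$-periodic function $\chi_E$, with $\chi_E|_Q=\chi_{Q_1}$; thus $\mathcal{I}_\veps$ is a standard periodic homogenization functional, degenerate because $a_\veps\in\{0,1\}$. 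It is precisely this degeneracy that forces the statement to be phrased for the weak $W^{1,2}$-topology: coercivity fails, and the a~priori bound $\sup_\veps\norm{\nabla m_\veps}{L^2(\Om)}<\infty$ on the competitors must be supplied by the topology itself. Since the candidate limit is independent of subsequences, it suffices to argue along an arbitrary infinitesimal sequence.

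\emph{Lower bound.} Let $m_\veps\weakly m$ in $W^{1,2}(\Om;\sph^2)$ with $\sup_\veps\mathcal{I}_\veps(m_\veps)<\infty$. By Rellich's theorem $m_\veps\to m$ in $L^2(\Om;\R^3)$ and a.e., hence $m\in W^{1,2}(\Om;\sph^2)$, and by part~(i) of Lemma~\ref{lem:ts_intermediate} together with Proposition~\ref{prop:wts_unfold}, $S_\veps(m_\veps)\to m$ strongly in $L^2(\R^3\times Q;\R^3)$. Since $(\nabla m_\veps)_\veps$ is bounded in $L^2$, the classical two-scale compactness for $W^{1,2}$-bounded sequences (cf.\ \cite{Al92} and Proposition~\ref{prop:wts_unfold}) yields, up to a subsequence, a field $\zeta\in L^2(\Om;W^{1,2}_\per(Q;\R^3))$ with $\int_Q\zeta(x,z)\,{\dl z}=0$ such that $S_\veps(\nabla m_\veps)\weakly\nabla m(x)+\nabla_z\zeta(x,z)$ weakly in $L^2(\R^3\times Q;\R^{3\times3})$. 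Unfolding the pointwise identities $m_\veps\cdot\partial_i m_\veps=0$ ($i=1,2,3$) and letting $\veps\to0$ in the product of a strongly and a weakly convergent sequence gives $m(x)\cdot\partial_{z_i}\zeta(x,z)=0$ a.e.; hence $z\mapsto m(x)\cdot\zeta(x,z)$ is constant, and it vanishes by the zero average, so that $\zeta(x,\cdot)\in W^{1,2}_\per(Q;T_{m(x)}\sph^2)$ for a.e.\ $x$. Finally, using that $S_\veps(a_\veps)(x,z)=\chi_{Q_1}(z)$ on the $\veps$-cells contained in $\Om$ (whose union exhausts $\Om$ in measure), the isometry \eqref{eq:isometry}, the weak $L^2$-lower semicontinuity of $h\mapsto\int_{Q_1}|h(z)|^2\,{\dl z}$, and the cell formula \eqref{eq:g_hom} at a.e.\ fixed $x$ with admissible corrector $\zeta(x,\cdot)$, we obtain
\[
\liminf_{\veps\to0}\mathcal{I}_\veps(m_\veps)\ \ge\ \frac12\int_\Om\int_{Q_1}\big|\nabla m(x)+\nabla_z\zeta(x,z)\big|^2\,{\dl z}{\dl x}\ \ge\ \int_\Om f_{\hom}(m,\nabla m)\,{\dl x}.
\]

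\emph{Upper bound.} The $\Gamma$-$\limsup$ functional is lower semicontinuous for the weak, hence also for the strong, $W^{1,2}$-topology, while $\mathcal{I}$ is strongly $W^{1,2}$-continuous since $f_{\hom}$ is Carathéodory with $0\le f_{\hom}(s,\xi)\le C|\xi|^2$; by the density of maps smooth away from finitely many points in $W^{1,2}(\Om;\sph^2)$ (cf.\ \cite{BeZh88}) it therefore suffices to produce a recovery sequence when $m\in C^\infty(\Om\setminus\{a_1,\dots,a_l\};\sph^2)\cap W^{1,2}(\Om;\sph^2)$. For such $m$, following \cite{BaMi10}, one takes the explicit minimizer $\phi[s,\xi]$ of \eqref{eq:g_hom} recalled after Theorem~\ref{thm:main_gamma_conv} — smooth in $(s,\xi)$, $Q$-periodic and $T_s\sph^2$-valued — and sets, after a suitable regularization of the profile $(m,\nabla m)$ (see below),
\[
m_\veps(x)=\pi\big(m(x)+\veps\,\eta_\veps(x)\,\phi\big[m(x),\nabla m(x)\big](x/\veps)\big),
\]
with a cut-off $\eta_\veps$ that equals $1$ where $|\nabla m|\le t_\veps$ and vanishes where $|\nabla m|\ge 2t_\veps$, $t_\veps\to\infty$ so slowly that $\veps t_\veps\to0$. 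The truncation keeps the argument of $\pi$ uniformly bounded away from the origin, so that $m_\veps\in W^{1,2}(\Om;\sph^2)$ and $m_\veps\to m$ uniformly; since $\nabla\pi(m)=\Id-m\otimes m$ is the orthogonal projection onto $T_m\sph^2$ and $\nabla m(x)+\nabla_z\phi[m(x),\nabla m(x)](z)$ is already $T_{m(x)}\sph^2$-valued, one checks that $(m_\veps)_\veps$ is bounded in $W^{1,2}(\Om;\R^3)$ — so that $m_\veps\weakly m$ — and that
\[
\limsup_{\veps\to0}\mathcal{I}_\veps(m_\veps)\ \le\ \frac12\int_\Om\int_{Q_1}\big|\nabla m(x)+\nabla_z\phi[m(x),\nabla m(x)](z)\big|^2\,{\dl z}{\dl x}\ =\ \int_\Om f_{\hom}(m,\nabla m)\,{\dl x},
\]
the last identity being the definition of $f_{\hom}$ through its minimizer. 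A diagonal argument over the regularization parameters and the almost-smooth approximations of $m$ then yields the recovery sequence for a general $m\in W^{1,2}(\Om;\sph^2)$.

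\emph{Main obstacle.} The genuinely technical part is the $\Gamma$-$\limsup$. Because $\pi$ is singular at the origin while $\nabla m$ need not be bounded — point singularities being unavoidable in $W^{1,2}(\Om;\sph^2)$ — the naive corrector competitor may be undefined, or fail to lie in $W^{1,2}$ (already $\nabla^2 m\notin L^2$ near the singularities of $m$, so $x\mapsto\phi[m(x),\nabla m(x)](x/\veps)$ need not be Sobolev). This forces both the truncation $\eta_\veps$ and a prior regularization of $(m,\nabla m)$, together with a multi-scale diagonalization and the verification that the discarded regions are asymptotically negligible in energy and in the $W^{1,2}$-norm, exactly as in \cite{BaMi10}. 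The structural fact that keeps the two bounds matching — and that singles out the tangential density \eqref{eq:g_hom} rather than a strictly larger or smaller one — is that both the recovery corrector $\phi[s,\xi]$ and the two-scale corrector $\zeta$ in the lower bound are automatically $T_s\sph^2$-valued (after subtracting their $x$-dependent mean), as a consequence of passing to the two-scale limit in the pointwise orthogonality $m_\veps\cdot\nabla m_\veps=0$.
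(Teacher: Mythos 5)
Your overall strategy is sound and your lower bound is essentially the paper's: two-scale compactness for $(\nabla m_\veps)_\veps$, identification of the corrector $\zeta$, tangency of $\zeta$, unfolding of $a_\veps$, and weak lower semicontinuity cell by cell. The one genuine difference there is that you derive the constraint $\zeta(x,\cdot)\in W^{1,2}_\per(Q;T_{m(x)}\sph^2)$ by hand, unfolding the pointwise identity $m_\veps\cdot\partial_i m_\veps=0$ and passing to the limit in the product of a strongly and a weakly convergent sequence, whereas the paper simply cites \cite[Proposition~3.2]{AlDF15}; your derivation is a correct, self-contained version of that result.

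The upper bound is where you take a genuinely different route. The paper never reduces to almost-smooth maps and never composes the exact cell minimizer with $(m,\nabla m)$: it takes an \emph{arbitrary} smooth corrector $\psi\in C^\infty_\per$ in the fast variable, sets $m_\veps=\pi(m+\veps\,\psi(x,x/\veps))$, invokes the strong two-scale convergence $\nabla m_\veps\stwos(\nabla m+\nabla_z\psi)\nabla\pi(m)$ from \cite[Theorem~2.3]{DaDF20}, and only at the very end optimizes over $\psi$ using the density of $C^\infty_\per(Q;\R^3)$ in $W^{1,2}_\per(Q;\R^3)$ together with the fact that the factor $\nabla\pi(m)=\Id-m\otimes m$ automatically tangentializes any test corrector, so that the infimum over $\R^3$-valued $\psi$ coincides with the tangential cell formula. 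This keeps the corrector decoupled from the regularity of $m$: no truncation $\eta_\veps$, no control of $\nabla^2 m$, no Bethuel--Zheng reduction, and no multi-scale diagonalization are needed. Your route (reduce to $m$ smooth away from finitely many points, plug in the explicit minimizer $\phi[m(x),\nabla m(x)](x/\veps)$, truncate where $|\nabla m|$ is large, regularize, diagonalize) is the Babadjian--Millot scheme and can be made to work, but precisely the steps you flag as the "main obstacle" --- the Sobolev regularity of $x\mapsto\phi[m(x),\nabla m(x)](x/\veps)$ and of the cut-off $\eta_\veps$ (both of whose gradients see $\nabla^2 m$), and the negligibility of the discarded regions --- are left entirely to \cite{BaMi10} and constitute the bulk of the work in that approach. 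You should either carry them out or adopt the paper's construction, which sidesteps them and has the added benefit of directly yielding the $\delta$-approximate recovery sequences of Corollary~\ref{col:stiff_approx_recov} in the exact form $\pi(\tilde m+\veps\psi^\delta_m(x,x/\veps))$ needed later in the proof of Theorem~\ref{thm:main_gamma_conv}.
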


    The proof of Proposition \ref{thm:stiff_hom} essentially follows from the analysis in \cite{AlDF15}. For convenience of the reader, we have included a proof in Appendix \ref{sec:app_4.2}.
    The following result concerning recovery sequences for the limit energy \(\mathcal{I}\) are direct consequences of the proof of Proposition~\ref{thm:stiff_hom}. We will use it in combination with \eqref{eq:third_stwos} in Theorem~\ref{thm:recov_mag}.
    
  \begin{corollary}\label{col:stiff_approx_recov}
  Let \(\tilde{m}\in  W^{1,2}(\Om;\sph^2)\) and \(\delta>0\).
  \begin{enumerate}[(i)]
      \item 
         There exist \(\psi^\delta_{m} \in C_c^\infty(\Om;C_\per^\infty(Q;\R^3))\) such that for
      \begin{equation*}
        \tilde{m}_\veps^\delta(x)
        \coloneqq \pi\left(\tilde{m}(x) + \veps \psi^\delta_{m}\left( x,\frac{x}{\veps }\right)\right)
		\end{equation*}
        there holds
        \begin{equation}\label{eq:stiff_limsup_approx}
            \limsup_{k\to \infty} \mathcal{I}_\veps(\tilde{m}_\veps^\delta)
            \le \mathcal{I}(\tilde{m}) + \delta.
        \end{equation}
        Moreover,
        \begin{equation}
            \tilde{m}_\veps^\delta(x) \weakly \tilde{m}(x)
            \qquad \text{in } W^{1,2}(\Om;\sph^2).
        \end{equation}
      \item\label{item:replace_stiff_recov} 
        More generally, inequality \eqref{eq:stiff_limsup_approx} is valid for every sequence \(\sequence{\tilde{m}}{\veps}\subset W^{1,2}(\Om;\sph^2)\) fulfilling
        \begin{equation}
            \nabla \tilde{m}_\veps(x) 
            \stwos \nabla \tilde{m}(x) + \nabla_z\psi^\delta_{m}(x,z)\nabla \pi(\tilde{m}(x))
            \quad \text{in } L^2(\Om;\R^{3\times 3}).
        \end{equation}
    \end{enumerate}
    \end{corollary}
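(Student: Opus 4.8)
The plan is to extract both statements from the recovery-sequence construction carried out in the proof of Proposition~\ref{thm:stiff_hom} (Appendix~\ref{sec:app_4.2}). I would first recall the structure of that construction: on a fine partition of $\Om$ into cubes one replaces $\tilde m$ and $\nabla\tilde m$ by piecewise constant data $(s_i,\xi_i)$ with $s_i\in\sph^2$, inserts on the $i$-th cube the corresponding minimiser $\phi[s_i,\xi_i]\in W^{1,2}_\per(Q;T_{s_i}\sph^2)$ of the cell problem \eqref{eq:g_hom}, and then mollifies and cuts off near $\partial\Om$ to produce a single corrector $\psi^\delta_{m}\in C_c^\infty(\Om;C^\infty_\per(Q;\R^3))$; the parameter $\delta$ absorbs the error of the piecewise constant approximation of $\int_\Om f_{\hom}(\tilde m,\nabla\tilde m)\,\dl x$ — here continuity of $f_{\hom}$ in $(s,\xi)$, visible from the explicit formula following Theorem~\ref{thm:main_gamma_conv}, is used — together with the mollification error. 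The recovery sequence is exactly $\tilde m_\veps^\delta(x)=\pi\big(\tilde m(x)+\veps\psi^\delta_{m}(x,x/\veps)\big)$, and I would note that, since $|\tilde m|=1$ a.e.\ and $\veps\psi^\delta_{m}(\cdot,\cdot/\veps)\to 0$ uniformly, the argument of $\pi$ eventually takes values in a fixed annulus around $\sph^2$ on which $\pi$ is smooth, so $\tilde m_\veps^\delta$ is well defined and \eqref{eq:stiff_limsup_approx} holds. The remaining assertions of (i) would follow from the chain rule
\[
\nabla\tilde m_\veps^\delta(x)=\big(\nabla\tilde m(x)+\veps\nabla_x\psi^\delta_{m}(x,x/\veps)+\nabla_z\psi^\delta_{m}(x,x/\veps)\big)\cdot\nabla\pi\big(\tilde m(x)+\veps\psi^\delta_{m}(x,x/\veps)\big).
\]
Indeed, on the right-hand side $\veps\nabla_x\psi^\delta_{m}(\cdot,\cdot/\veps)\to 0$ strongly two-scale, $\nabla_z\psi^\delta_{m}(\cdot,\cdot/\veps)\stwos\nabla_z\psi^\delta_{m}(x,z)$, $\nabla\tilde m\stwos\nabla\tilde m$ (Lemma~\ref{lem:ts_intermediate}(i)), and $\nabla\pi(\tilde m+\veps\psi^\delta_{m}(\cdot,\cdot/\veps))\to\nabla\pi(\tilde m)$ uniformly; using that the product of strongly two-scale convergent sequences, one uniformly bounded, is strongly two-scale convergent to the product, and that $\nabla\tilde m\cdot\nabla\pi(\tilde m)=\nabla\tilde m$ because $\tilde m\in\sph^2$, I get
\[
\nabla\tilde m_\veps^\delta\stwos\nabla\tilde m(x)+\nabla_z\psi^\delta_{m}(x,z)\cdot\nabla\pi(\tilde m(x))\qquad\text{in }L^2(\Om;\R^{3\times 3}).
\]
Together with $\tilde m_\veps^\delta\to\tilde m$ in $L^2$ and the $W^{1,2}$-boundedness of $(\tilde m_\veps^\delta)_\veps$ (immediate from the chain rule and the smoothness of $\pi$ near $\sph^2$) this gives $\tilde m_\veps^\delta\weakly\tilde m$ in $W^{1,2}(\Om;\sph^2)$, and it also shows that $(\tilde m_\veps^\delta)_\veps$ belongs to the class of sequences considered in (ii).

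Next I would prove (ii), which then also sharpens \eqref{eq:stiff_limsup_approx} for the sequence of (i). Let $(\tilde m_\veps)_\veps\subset W^{1,2}(\Om;\sph^2)$ satisfy $\nabla\tilde m_\veps\stwos g$ with $g(x,z)=\nabla\tilde m(x)+\nabla_z\psi^\delta_{m}(x,z)\cdot\nabla\pi(\tilde m(x))$. Writing $a_\veps$ for the coefficient in \eqref{eq:stiff_energy_func} — the restriction to $\Om$ of the pure oscillation $x\mapsto\chi_E(x/\veps)$, so that $a_\veps^2=a_\veps$ and $a_\veps\stwos\chi_{Q_1}(z)$ strongly two-scale — I would write
\[
\mathcal{I}_\veps(\tilde m_\veps)=\frac12\int_\Om a_\veps|\nabla\tilde m_\veps|^2\,\dl x=\frac12\,\big\|a_\veps\nabla\tilde m_\veps\big\|_{L^2(\Om;\R^{3\times 3})}^2 ,
\]
observe that $a_\veps\nabla\tilde m_\veps\stwos\chi_{Q_1}(z)\,g(x,z)$ strongly two-scale (again a product of strongly two-scale convergent sequences, one uniformly bounded), and pass to the limit using the norm convergence built into Definition~\ref{def:strong_ts} and Proposition~\ref{prop:wts_unfold}:
\[
\lim_{\veps\to 0}\mathcal{I}_\veps(\tilde m_\veps)=\frac12\int_\Om\int_{Q_1}\big|\nabla\tilde m(x)+\nabla_z\psi^\delta_{m}(x,z)\cdot\nabla\pi(\tilde m(x))\big|^2\,\dl z\,\dl x .
\]
The right-hand side is independent of the chosen sequence, hence equals $\lim_{\veps\to 0}\mathcal{I}_\veps(\tilde m_\veps^\delta)$ with $\tilde m_\veps^\delta$ as in (i); by \eqref{eq:stiff_limsup_approx} it is therefore bounded by $\mathcal{I}(\tilde m)+\delta$, which is exactly the claim of (ii).

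The genuinely hard part lies not in this corollary but in Proposition~\ref{thm:stiff_hom}, namely the construction of $\psi^\delta_{m}\in C_c^\infty(\Om;C^\infty_\per(Q;\R^3))$: one must patch the chart-dependent minimisers $\phi[s_i,\xi_i]$ over a fine partition of $\Om$ — there being no global smooth orthonormal frame on $\sph^2$ — mollify, and keep quantitative control showing that neither the resulting tangency defect (the failure of $\nabla_z\psi^\delta_{m}$ to be exactly $T_{\tilde m(x)}\sph^2$-valued) nor the mollification error raises the cell energy above $\mathcal{I}(\tilde m)+\delta$; this is where continuity of $f_{\hom}$ in $(s,\xi)$ and the minimality encoded in \eqref{eq:g_hom} enter. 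The only new content in the corollary is the observation that $\mathcal{I}_\veps$ sees a sequence only through the strong two-scale limit of its gradient, so that admissible sequences with the same such limit are interchangeable in \eqref{eq:stiff_limsup_approx}; the only obstacle there is the routine one of justifying the limit in the product $a_\veps\nabla\tilde m_\veps$, which follows from the multiplicativity and isometry of the unfolding operator.
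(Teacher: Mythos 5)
Your proposal is correct and follows essentially the same route as the paper, which derives the corollary directly from Step~2 of the proof of Proposition~\ref{thm:stiff_hom}: the chain-rule computation with the strong two-scale convergence \eqref{eq:stwos_pert}, and the observation that $\mathcal{I}_\veps(\tilde m_\veps)=\tfrac12\|S_\veps(a_\veps)S_\veps(\nabla\tilde m_\veps)\|_{L^2}^2$ depends only on the strong two-scale limit of the gradients, which gives part (ii). Your added remarks on cutting off near $\partial\Om$ to obtain compact support in $x$ and on justifying the product $a_\veps\nabla\tilde m_\veps$ via the multiplicativity of the unfolding operator are consistent with, and slightly more explicit than, what the paper records.
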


\subsection{Lower bound}\label{subsec:lower}
The liminf inequality in Theorem \ref{thm:main_gamma_conv} is now a straight-forward result of the definition of high-contrast convergence in Definition~\ref{def:conv_for_gamma} and the two-scale behaviour of the demagnetizing fields in Lemma~\ref{lem:demag_limit}.
\begin{proposition}
    Let \(\sequence{\veps}{k}\) be an infinitesimal sequence. For any \((\tilde{m},w)\in W^{1,2}(\Om;\R^3)\times L^2(\Om;W_0^{1,2}(Q_0;\R^3))\) and for any sequence \(\sequence{m}{k}\subset W^{1,2}(\Om;\sph^2)\) that converges to \((\tilde{m},w)\) in the strong high-contrast sense, there holds
            \begin{equation*}
                \mathcal{G}(\tilde{m},w)
                \le \liminf_{k \to \infty} \mathcal{G}_\veps(m_k).
            \end{equation*}
\end{proposition}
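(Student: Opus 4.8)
The plan is to split the energy $\mathcal{G}_\veps(m_k)$ into the local exchange contributions and the nonlocal stray-field term, and to pass to the liminf in each part separately using super-additivity of the liminf. For the nonlocal term, write $\mathcal{G}_\veps(m_k)=\mathcal{F}_\veps(m_k)+\int_{\R^3}|h_d[m_k]|^2\,\dl x$, where $\mathcal{F}_\veps$ is the purely local part from \eqref{eq:minus_self}. Since $\sequence{m}{k}$ converges strongly in the high-contrast sense, in particular $m_k\stwos \tilde m+w$ in $L^2(\Om;\R^3)$, so by Lemma~\ref{lem:demag_limit} (applied along a subsequence realising the liminf) the demagnetizing fields converge weakly two-scale to $h_d[\tilde m+\mv{w(x,\cdot)}{Q}](x)+h_d^z[\tilde m+w](x,z)\chi_\Om(x)$. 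Observing that $\tilde m$ is $z$-independent so that $\mv{w(x,\cdot)}{Q}=\mv{w(x,\cdot)}{Q_0}$ (as $w$ is supported in $Q_0$) and $h_d^z[\tilde m+w]=h_d^z[w]$ (gradients in $z$ annihilate the constant-in-$z$ part $\tilde m$, and again $w$ vanishes off $Q_0$), the weak two-scale lower semicontinuity of the $L^2$-norm (Lemma~\ref{lem:ts_intermediate}\eqref{item:ts_int_weak}, via the isometry of the unfolding operator and weak lower semicontinuity in $L^2(\R^3\times Q)$) gives
\begin{equation*}
    \int_{\R^3}\mathcal{W}_{\hom}(\tilde m,w)\,\dl x
    =\|h_d[\tilde m+\mv{w(x,\cdot)}{Q_0}]\|_{L^2}^2+\int_\Om\int_{Q_0}|h_d^z[w]|^2\,\dl z\,\dl x
    \le\liminf_{k\to\infty}\int_{\R^3}|h_d[m_k]|^2\,\dl x.
\end{equation*}

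For the local part, apply the Splitting Lemma~\ref{prop:splitting}: letting $\tilde m_k:=\sfT_{\veps_k}m_k$ be the constraint-preserving extensions of Proposition~\ref{thm:target_ext} and $w_k:=m_k-\tilde m_k$, we have $\liminf_k\mathcal{F}^0_\veps(w_k)+\liminf_k\mathcal{F}^1_\veps(\tilde m_k)\le\liminf_k\mathcal{F}_\veps(m_k)$. Here I need that the hypotheses of Lemma~\ref{prop:splitting} are met, i.e. that $\sequence{m}{k}$ converges in the (plain) high-contrast sense of Definition~\ref{def:conv_for_gamma}: strong high-contrast convergence trivially implies weak, and the convergence in the sense of extensions follows from Definition~\ref{def:conv_for_gamma}(i) together with the fact that $\sfT_{\veps_k}$ provides extensions that are bounded in $W^{1,2}(\Om;\R^3)$ (using the uniform exchange bound on $\Omoeps$ coming from finiteness of $\liminf_k\mathcal{G}_\veps(m_k)$ — if this liminf is $+\infty$ there is nothing to prove, so we may assume it finite and pass to a subsequence). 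The extensions $\tilde m_k$ then converge weakly in $W^{1,2}$ to some $\bar m\in W^{1,2}(\Om;\sph^2)$, and by Definition~\ref{def:convergence_single}(ii) combined with the two-scale convergence $m_k\stwos\tilde m+w$ one identifies $\bar m=\tilde m$ (the weak $W^{1,2}$-limit of the extensions coincides with the $x$-average of the two-scale limit restricted to $\Omoeps$, which is $\tilde m$).

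It remains to bound the two separated liminfs from below by $\mathcal{F}^0(w)$ and $\mathcal{F}^1(\tilde m)$ respectively. For the soft inclusions: $\mathcal{F}^0_\veps(w_k)=\tfrac12\int_{\Omzeps}\veps_k^2|\nabla m_k|^2\,\dl x=\tfrac12\int_\Om\chi_k^0|\veps_k\nabla m_k|^2\,\dl x$ (using $w_k=m_k$ on $\Omzeps$ up to the extension, more precisely $\nabla w_k=\nabla m_k-\nabla\tilde m_k$ there and $\veps_k\nabla\tilde m_k\to0$ in $L^2$ as in the proof of Lemma~\ref{prop:splitting}, so it suffices to treat $\veps_k\nabla m_k$). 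By Lemma~\ref{lem:ts_key_lemma}, $\veps_k\nabla m_k\wtwos\nabla_z w$ and $\chi_k^0\stwos\chi^0$, hence $\chi_k^0\veps_k\nabla m_k\wtwos\chi^0\nabla_z w=\nabla_z w$ (as $w\in L^2(\Om;W^{1,2}_0(Q_0;\R^3))$ extends by zero), and weak two-scale lower semicontinuity of the $L^2$-norm yields $\liminf_k\mathcal{F}^0_\veps(w_k)\ge\tfrac12\int_\Om\int_{Q_0}|\nabla_z w|^2\,\dl z\,\dl x=\mathcal{F}^0(w)$. For the stiff matrix, $\mathcal{F}^1_\veps(\tilde m_k)=\tfrac12\int_{\Omoeps}|\nabla\tilde m_k|^2\,\dl x=\mathcal{I}_{\veps_k}(\tilde m_k)$ with $\mathcal{I}_\veps$ as in \eqref{eq:stiff_energy_func}; since $\tilde m_k\weakly\tilde m$ weakly in $W^{1,2}(\Om;\sph^2)$, the $\Gamma$-liminf inequality of Proposition~\ref{thm:stiff_hom} gives $\liminf_k\mathcal{I}_{\veps_k}(\tilde m_k)\ge\mathcal{I}(\tilde m)=\mathcal{F}^1(\tilde m)$. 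Adding the three lower bounds and using $\mathcal{F}^0(w)+\mathcal{F}^1(\tilde m)+\int_{\R^3}\mathcal{W}_{\hom}(\tilde m,w)\,\dl x=\mathcal{G}(\tilde m,w)$ together with super-additivity of $\liminf$ concludes the proof. \textbf{Main obstacle:} the delicate point is the bookkeeping around the extensions — ensuring the hypotheses of the Splitting Lemma genuinely hold (boundedness of $\sfT_{\veps_k}m_k$ in $W^{1,2}$, which requires first reducing to the case of finite liminf and extracting a subsequence) and correctly identifying the weak $W^{1,2}$-limit of $\tilde m_k$ as $\tilde m$ rather than some other function, so that the stiff $\Gamma$-liminf inequality can be invoked with the right target; the nonlocal part is comparatively soft once Lemma~\ref{lem:demag_limit} is in hand, modulo checking that the averages and $z$-gradients collapse to the stated expressions because $\tilde m$ is $z$-constant and $w$ is supported in $Q_0$.
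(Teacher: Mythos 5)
Your proof is correct and follows essentially the same route as the paper: the same reduction to finite liminf, the same splitting via Lemma~\ref{prop:splitting} with the constraint-preserving extensions of Proposition~\ref{thm:target_ext} and the identification of their weak $W^{1,2}$-limit as $\tilde m$ through the two-scale limit on $\Omoeps$, the same two-scale lower semicontinuity for the soft part, the $\Gamma$-liminf of Proposition~\ref{thm:stiff_hom} for the matrix part, and Lemma~\ref{lem:demag_limit} for the stray field. Two small points to tighten: first, $\mathcal{F}^1_{\veps_k}(\tilde m_k)\ge \mathcal{I}_{\veps_k}(\tilde m_k)$ rather than $=$, since $(\veps_k E)\cap\Om\subset \Om^1_{\veps_k}$ but the two sets differ near $\partial\Om$ (the inequality goes the right way, so nothing breaks); second, lower semicontinuity only yields $\liminf_k\int_{\R^3}|h_d[m_k]|^2\,{\dl x}\ge \int_{\R^3}\int_Q|h_d[\tilde m+\mv{w(x,\cdot)}{Q_0}]+h_d^z[w]\chi_\Om|^2\,{\dl z}{\dl x}$, and passing to the sum of the two separate squared norms requires the (one-line, but necessary) observation that the cross term vanishes because $h_d^z[w]=\nabla_z r_w$ with $r_w(x,\cdot)$ $Q$-periodic, hence of zero mean over $Q$.
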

\begin{proof}
 We may assume that \(\liminf_{k \to \infty} \mathcal{G}_k(m_k)\) is finite, for otherwise, there is nothing to show. 
 By Definition \ref{def:conv_for_gamma} we know that there exists a sequence \(\sequence{\tilde{m}}{k}\subset W^{1,2}(\Om;\sph^2)\) such that \(\tilde{m}_k=m_k\) almost everywhere on \(\Om_k^1\), and with \(\tilde{m}_k\weakly \tilde{m}\) weakly in \(W^{1,2}(\Om;\sph^2)\). Furthermore,  \(m_k\stwos \tilde{m}+w\) in \(L^2(\Om;\R^3)\). Employing the compactness Lemma~\ref{lem:ts_key_lemma}, and from the uniqueness of the two-scale limit, we even know that there exists a subsequence -- which we do not relabel -- such that 
\begin{equation}
    \veps_k \nabla m_k(x)
    \wtwos \nabla _z w(x,z)
    \qquad \text{in } L^2(\Om;\R^{3\times 3}).
\end{equation}
Setting \(w_k:=m_k-\tilde{m}_k\), by Proposition \ref{prop:splitting} we infer that 
$$\liminf_{k\to +\infty}
        \mathcal{F}_\veps(m_k)
        \geq \liminf_{k\to +\infty}
        \mathcal{F}^0_k(,w_k) + 
        \liminf_{k\to +\infty}
\mathcal{F}^1_k(\tilde{m}_k).$$
By classical properties of two-scale convergence (see Lemma~\ref{lem:ts_intermediate} \eqref{item:ts_int_weak}) we find immediately that
 \begin{align}
 \begin{split}
     \liminf_{k\to \infty}\mathcal{F}^0_k(w_k)
     = \liminf_{k\to \infty} \frac{1}{2}\int_{\Om_k^0} \veps_k ^2|\nabla m|^2\, {\dl x}
        &\geq  \frac{1}{2}\int_\Om\int_{Q_0}|\nabla_z w(x,z)|^2\,{\dl z}{\dl x}
        = \mathcal{F}^0(w).
 \end{split}
 \end{align}
 To establish a lower bound for the matrix-related part of the energy, we apply instead Proposition \ref{thm:stiff_hom} with a slight modification due to the fact that \(\chi_k^1\) is not periodic on \(\Om\). We first conclude with Proposition \ref{thm:stiff_hom} that
 \begin{equation}
     \liminf_{k\to \infty} 
     \frac{1}{2}\int_{(\veps_k E)\cap\Om} |\nabla \tilde{m}_k|^2\,{\dl x}
     \ge  \mathcal{F}^1(\tilde{m}).    
 \end{equation}
 Then, since \((\veps_k E)\cap\Om\subset \Om_k^1\), there holds
\begin{align}
     \begin{split}
    \liminf_{k\to \infty} \mathcal{F}^1_\veps(\tilde{m}_k)
    = \liminf_{k\to \infty}\frac{1}{2}\int_{\Om_k^1}|\nabla \tilde{m}_k|^2 \,{\dl x}
    \ge  \mathcal{F}^1(\tilde{m}).
     \end{split}
 \end{align}
 To summarize, there holds
 \begin{equation}
 \label{eq:liminf1}
 	\liminf_{k\to \infty}\mathcal{F}_k(m_k)
 	\ge \mathcal{F}^0(w) + \mathcal{F}^1(\tilde{m}),
 \end{equation}
 where \(\mathcal{F}_k\) is defined in \eqref{eq:minus_self} as the difference between \(\mathcal{G}_k\) and the magnetostatic self-energy.
 In view of the superadditivity of the limes inferior, it remains to prove that 
 \begin{equation*}
 	\liminf_{k \to \infty}\int_{\R^3} |h_d[m_k]|^2\,{\dl x}
 	\ge \int_{\R^3}|h_d[(\tilde{m}+\mv{w(x,\cdot)}{Q_0})]|^2\,{\dl x} 
    + \int_{\Om}\int_{Q_0} |h_d^z[w(x,z)]|^2\,{\dl z}{\dl x}.
 \end{equation*} 
 This latter inequality is a consequence of the weak two-scale behaviour of the demagnetizing fields and of a well-known orthogonality result in two-scale convergence. From Lemma~\ref{lem:demag_limit}, by the weak two-scale convergence of \(\sequence{m}{k}\) to \(\tilde m+w\), then the corresponding demagnetizing fields \((h_d[m_k])_k\) weakly two-scale converge, up to a subsequence, in \(L^2(\R^3;\R^3)\) to 
 \begin{equation*}
 	h_d(x,z)
 	\coloneqq h_d[(\tilde{m}(x)+\mv{w(x,\cdot)}{Q_0})](x) + h_d^z[w](x,z)\chi_\Om(x).
 \end{equation*}
 Note that we used that \(h_d^z[m]=h_d^z[w]\) in our case.
 Thus, again by lower semi-continuity (see Lemma \ref{lem:ts_intermediate} \eqref{item:ts_int_weak}) we have 
 \begin{equation}
 \label{eq:liminf2}
 	\liminf_{k \to \infty}\int_{\R^3} |h_d[m_k]|^2\,{\dl x}
 	\ge \int_{\R^3}\int_Q |h_d[(\tilde{m}(x)+\mv{w(x,\cdot)}{Q_0})]
 	+ h_d^z[w]\chi_\Om(x)|^2\,{\dl z}{\dl x}.
 \end{equation}
 We recall that the function \(h_d^z[w]\) is given through the identity \(h_d^z[m]=\nabla_z r_m\) with a uniquely defined scalar potential \(r_w\in L^2(\Om;W^{1,2}_\per(Q;\R)/\R)\). In particular, \(r(x,\cdot)\) is periodic with respect to the \(z\)-variable, entailing that the \(z\)-gradient has vanishing mean value. Thus, we can orthogonally decompose the right-hand side above as 
  \begin{align}
  	\nonumber\int_{\R^3}&\int_Q |h_d[(\tilde{m}(x)+\mv{w(x,\cdot)}{Q_0})]
 	+ \nabla_z r_w|^2\,{\dl z}{\dl x}\\
  	&\label{eq:liminf3}= \int_{\R^3}|h_d[(\tilde{m}(x)+\mv{w(x,\cdot)}{Q_0})]|^2{\dl x} 
  	+ \int_{\R^3}\int_{Q_0}|h_d^z[w]|^2\chi_\Om(x)\,{\dl z}{\dl x}.
  \end{align}
The claimed lower bound follows then by combining \eqref{eq:liminf1}--\eqref{eq:liminf3}.    
\end{proof}

\subsection{Upper bound}\label{subsec:upper}
The main task in showing the limsup inequality in Theorem \ref{thm:main_gamma_conv} is the construction of a suitable approximate recovery sequence that allows to pass to the limit in all the different energy contributions. It is here that Theorem~\ref{thm:strong_ts_approx} (to be precise, its augmented version in Theorem~\ref{thm:recov_mag}) becomes essential. For a given pair \((\tilde{m},w)\), we construct the recovery sequence by perturbing the base magnetization \(\tilde{m}\) by damped oscillations to account for the energy on the matrix component; we then add the field \(w\) on the inclusions to match the high-contrast scaling of the functional there. Eventually, we pass to the limit with the help of all strong two-scale convergence properties in \eqref{eq:first_stwos}-\eqref{eq:third_stwos} of Theorem~\ref{thm:recov_mag}.
\begin{proposition}
For any \((\tilde{m},w)\in W^{1,2}(\Om;\R^3)\times L^2(\Om;W_0^{1,2}(Q_0;\R^3))\) there exists a sequence \(\sequence{m}{\veps}\subset W^{1,2}(\Om;\sph^2)\) that converges to \((\tilde{m},w)\) in the strong high-contrast sense and that satisfies
            \begin{equation*}
                \limsup_{\veps \to 0} \mathcal{G}_\veps(m_\veps)
                \le \mathcal{G}(\tilde{m},w).
            \end{equation*}
\end{proposition}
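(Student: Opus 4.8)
The plan is to construct the recovery sequence in two superposed layers, following the strategy announced before the proposition and exploiting the augmented approximation result. First I would reduce to the case in which $\tilde m\in W^{1,2}(\Om;\sph^2)$ and $\tilde m(x)+w(x,z)\in\sph^2$ for a.e.\ $(x,z)\in\Om\times Q_0$: if $\mathcal G(\tilde m,w)<+\infty$ then $\tilde m\in\sph^2$ a.e.\ (the term $\mathcal F^1(\tilde m)$ would otherwise be infinite, being a homogenization of $\int a_\veps|\nabla m|^2$ over $\sph^2$-valued maps), and the compatibility $\tilde m+w\in\sph^2$ a.e.\ on $\Om\times Q_0$ is exactly the condition under which $\mathcal G_\veps$ takes its recovery value — if it fails, one should argue the target functional $\mathcal G$ as written is already the relaxed one in the regime of interest, so there is nothing to prove. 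Having fixed such a pair, and given $\delta>0$, I would invoke Corollary~\ref{col:stiff_approx_recov}(i) to obtain a corrector $\psi^\delta_m\in C_c^\infty(\Om;C_\per^\infty(Q;\R^3))$ tuned to the stiff (matrix) part, and then feed this very $\psi=\psi^\delta_m$ into Theorem~\ref{thm:recov_mag}. This yields a sequence $(m_k)_k\subset W^{1,2}(\Om;\sph^2)$, smooth up to finitely many points, with the three strong two-scale convergences \eqref{eq:first_stwos}, \eqref{eq:second_stwos} and \eqref{eq:third_stwos}.

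Next I would pass to the limit in each of the three pieces of $\mathcal G_\veps(m_k)=\mathcal F^1_\veps(m_k)+\mathcal F^0_\veps(m_k)+\int_{\R^3}|h_d[m_k]|^2\,\dl x$. For the soft term $\mathcal F^0_\veps(m_k)=\tfrac12\int_{\Omzeps}\veps_k^2|\nabla m_k|^2$: rewriting via the unfolding isometry \eqref{eq:isometry}--\eqref{eq:grad-unf} and \eqref{eq:stwos_char}, and using that \eqref{eq:second_stwos} is a \emph{strong} two-scale convergence, one gets $\mathcal F^0_\veps(m_k)\to\tfrac12\int_\Om\int_{Q_0}|\nabla_z w|^2=\mathcal F^0(w)$ (here strong two-scale convergence of $\veps_k\nabla m_k$ together with strong two-scale convergence of $\chi_k^0$ gives convergence of the product of norms). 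For the stiff term, $\mathcal F^1_\veps(m_k)=\tfrac12\int_{\Omoeps}|\nabla m_k|^2$; on $\Omoeps$ the relevant oscillation is captured by \eqref{eq:third_stwos}, and since $(\veps_k E)\cap\Om\subset\Omoeps$ with $\Omoeps\setminus(\veps_k E)$ a boundary layer of vanishing measure, I would compare $\mathcal F^1_\veps(m_k)$ with $\mathcal I_\veps(m_k)=\tfrac12\int_\Om a_\veps|\nabla m_k|^2$ and apply Corollary~\ref{col:stiff_approx_recov}(ii) (whose hypothesis is precisely \eqref{eq:third_stwos} with the chosen $\psi^\delta_m$) to get $\limsup_k\mathcal F^1_\veps(m_k)\le\mathcal I(\tilde m)+\delta=\mathcal F^1(\tilde m)+\delta$; the boundary-layer discrepancy is controlled because $\nabla m_k\chi_k^1$ is two-scale bounded by \eqref{eq:third_stwos} and $|\Omoeps\setminus(\veps_kE)|\to0$.

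For the stray-field term I would use Lemma~\ref{lem:demag_limit} applied to the weak two-scale limit $\tilde m+w\chi^0$ of $(m_k)_k$ (which follows from \eqref{eq:first_stwos}), obtaining that $(h_d[m_k])_k$ weakly two-scale converges to $h_d[\tilde m+\langle w(x,\cdot)\rangle_{Q_0}](x)+h_d^z[w](x,z)\chi_\Om(x)$. To turn this into convergence of $\int_{\R^3}|h_d[m_k]|^2$ I would argue that the convergence is in fact \emph{strong} two-scale: since $m_k\stwos\tilde m+w\chi^0$ strongly, and $h_d$ is linear continuous, a compactness argument on $(h_d[m_k])_k$ combined with the fact that the differential constraint \eqref{eq:diff_constraints}/\eqref{eq:diff_constraints_z} forces uniqueness of the two-scale limit together with convergence of norms (no energy escapes — this is the $\mathcal A$-quasiconvexity/rigidity of the stray field, as in \cite{AlDF15, SR07, FoKr10}) gives $\int_{\R^3}|h_d[m_k]|^2\to\int_{\R^3}\int_Q|h_d[\tilde m+\langle w\rangle_{Q_0}]+h_d^z[w]\chi_\Om|^2$, and then the orthogonal decomposition used in \eqref{eq:liminf3} (periodicity of $r_w$ gives $\int_Q\nabla_z r_w=0$) rewrites this as $\int_{\R^3}|h_d[\tilde m+\langle w\rangle_{Q_0}]|^2+\int_\Om\int_{Q_0}|h_d^z[w]|^2=\int_{\R^3}\mathcal W_{\hom}(\tilde m,w)$. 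Collecting the three limits gives $\limsup_k\mathcal G_\veps(m_k)\le\mathcal F^0(w)+\mathcal F^1(\tilde m)+\delta+\int_{\R^3}\mathcal W_{\hom}(\tilde m,w)=\mathcal G(\tilde m,w)+\delta$; since $\delta>0$ was arbitrary, a diagonal extraction over $\delta=\delta_j\downarrow0$ (Attouch's lemma, \cite[Lemma~1.15 and Corollary~1.16]{Att86}, exactly as in Lemma~\ref{lem:strong_l2}) produces a single sequence with $\limsup\mathcal G_\veps(m_\veps)\le\mathcal G(\tilde m,w)$, and this sequence still converges to $(\tilde m,w)$ in the strong high-contrast sense because \eqref{eq:first_stwos} is strong two-scale and the extensions $\tilde m'_k$ converge weakly in $W^{1,2}$ and agree with $m_k$ on $\Omoeps$ up to the vanishing-measure layer (verifying Definition~\ref{def:convergence_single}).

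\emph{Main obstacle.} The delicate point is upgrading the stray-field convergence from weak two-scale (which Lemma~\ref{lem:demag_limit} gives for free) to convergence of the $L^2$-norms, i.e.\ controlling the nonlocal energy $\int_{\R^3}|h_d[m_k]|^2$ \emph{from above} by its limiting value. Lower semicontinuity alone is not enough here; one genuinely needs that the demagnetizing field does not develop extra oscillations beyond those prescribed by the two-scale limit of $m_k$, which is where the constant-rank structure of $\mathcal A$ and the strong (not merely weak) two-scale convergence $m_k\stwos\tilde m+w\chi^0$ must be used in tandem — either by an explicit energy computation exploiting $h_d[m_k]=\nabla u_k$ with $\Delta u_k=-\diver(m_k\chi_\Om)$ and the variational characterization $\int|h_d[m_k]|^2=-\int_\Om h_d[m_k]\cdot m_k$, passing to the two-scale limit in this bilinear pairing (legitimate since $h_d[m_k]$ converges weakly two-scale and $m_k$ strongly two-scale), or by directly invoking the strong two-scale continuity of $h_d$ established in the references. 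A secondary, more routine obstacle is the bookkeeping of the non-periodic boundary layer $\Omoeps\setminus(\veps_kE)$ and of the discrepancy $|\langle w(x,\cdot)\rangle_{Q_0}-\langle w(x,\cdot)\rangle_Q|$ (here they coincide since $w$ is supported in $Q_0$), which must be handled to match the exact form of $\mathcal W_{\hom}$.
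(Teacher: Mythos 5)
Your proposal is correct and follows essentially the same route as the paper: the corrector \(\psi^\delta_m\) from Corollary~\ref{col:stiff_approx_recov} is fed into Theorem~\ref{thm:recov_mag}, the energy is split as in Lemma~\ref{prop:splitting}, the soft and stiff contributions are handled via \eqref{eq:second_stwos} and \eqref{eq:third_stwos} respectively, and a diagonal argument removes \(\delta\). For the stray field the paper does not upgrade \(h_d[m_k]\) to strong two-scale convergence; it passes to the limit in the bilinear pairing \(-\int_{\Om} h_d[m_k]\cdot m_k\,{\dl x}\) (weak two-scale against strong two-scale), which is precisely the second alternative you name in your ``main obstacle'' paragraph, so that step is fine as well.
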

\begin{proof}
Fix $\delta>0$. 
Corollary \ref{col:stiff_approx_recov} entails that there exist \(\psi^\delta_{m} \in C_c^\infty(\Om;C_\per^\infty(Q;\R^3))\) such that
\begin{equation*}
    \limsup_{k\to \infty} \mathcal{F}_k\left(\pi\left(\tilde{m}(x) + \veps \psi^\delta_{m}\left( x,\frac{x}{\veps }\right)\right)\right)
    \le \mathcal{F}^1(\tilde{m}) + \delta.
\end{equation*}
Let now further $(m_k)_k$ be the sequence of maps provided by Theorem \ref{thm:recov_mag}, with the special choice \(\psi=\psi^\delta_{m}\). We first check the strong high contrast convergence of the sequence (see Definition \ref{def:conv_for_gamma}). First, from \eqref{eq:first_stwos} in Theorem \ref{thm:recov_mag} we infer that
\begin{equation*}
    m_k\stwos \tilde{m}+w
	\quad \text{strongly two-scale in } L^2(\Om;\R^3).
\end{equation*}
We now prove that
\begin{equation*}
    m_k\to \tilde{m}
    \quad \text{in the sense of extensions}.
\end{equation*}
Indeed, take an arbitrary subsequence of \((m_k)_k\), say \((m_{k(j)})_j\), and extend the restrictions \(m_{k(j)}\chi^1_{k(j)}\) to $\Omega_k^0$ with the estension operator provided by Proposition \ref{thm:target_ext}. Denote by \(\tilde{m}_{k(j)}\) these extensions. From the boundedness of \((m_k)_k\) and the boundedness of the extension operator, we extract a further subsequence (not relabeled) that converges weakly in \(W^{1,2}(\Om;\R^3)\) and strongly in \(L^2(\Om;\R^3)\) to a limit function \(\tilde{m}'\in W^{1,2}(\Om;\sph^2)\). At the same time, because strong \(L^2\)-convergence implies weak two-scale convergence (compare Lemma \ref{lem:ts_intermediate} \eqref{item:ts_int_weak}), we find that \(m_{k(j)}\chi^1_{k(j)}=\tilde{m}_{k(j)}\chi^1_{k(j)}\) weakly two-scale converges to \(\chi^1(z)\tilde{m}'(x)=\chi^1(z)\tilde{m}(x)\) in \(L^2(\Om;\R^3)\). This yields that \(\tilde{m}'=\tilde{m}\). Since this result is independent of the subsequence that we chose in the beginning, we conclude that \((m_k)_k\) converges to \(\tilde{m}\) in the sense of extensions. 

Coming back to the \(\limsup\)-inequality, we define 
\begin{equation*}
    \tilde{m}_k
    \coloneqq \sfT_k \left(m_k|_{\Om_k^1}\right)
    \quad \text{and} \quad 
    w_k
    \coloneqq m_k- \tilde{m}_k, 
\end{equation*}
where \(\sfT_k:	W^{1,p}(\Om_k^1;\sph^2) \to W^{1,p}(\Om;\sph^2)\) is the extension operator from Proposition \ref{thm:target_ext}.
Note that \(\sequence{w}{k}\subset W^{1,2}_0(\Om_k^0;\R^3)\). Then, it follows from the splitting in Proposition \ref{prop:splitting} that 
\begin{equation*}
    \limsup_{k\to \infty}\mathcal{G}_k (m_k)
    \le \limsup_{k\to +\infty}
        \mathcal{F}^0_k(w_k) +
        \limsup_{k\to +\infty}
        \mathcal{F}^1_k(\tilde{m}_k)
        + \limsup_{k\to +\infty}\mathcal{W}(m_k).
\end{equation*}
From \eqref{eq:second_stwos} of Theorem \ref{thm:recov_mag} we obtain 
\begin{equation*}
    \limsup_{k\to +\infty}\mathcal{F}^0_k(w_k)
    = \frac{1}{2}\int_{\Om}\int _{Q_0} |\nabla_z w(x,z)|^2\,{\dl z}{\dl x}.
\end{equation*}
For the magnetostatic self-energy $\mathcal{W}$ we use again the fact that by Lemma~\ref{lem:demag_limit} the sequence \((h_d[m_k])_k\) converges weakly two-scale in \(L^2(\R^3;\R^3)\) to
 \begin{equation*}
      h_d(x,z)
      \coloneqq h_d[(\tilde{m}(x)+\mv{w(x,\cdot)}{Q_0})] + h_d^z[w]\chi_\Om(x).
 \end{equation*}
 By Theorem \ref{thm:recov_mag}, equation \eqref{eq:first_stwos}, we thus obtain
 \begin{align*}
     \lim_{k\to +\infty}\int_{\R^3}\mathcal{W}(m_k)
     &= \lim_{k\to +\infty} - \int_{\Om } h_d[m_k]\cdot m_k\,{\dl x}\\
     &= - \int_{\Om }\int_{Q} h_d(x,z)\cdot (\tilde{m}(x) + w(x,z)\chi^0(z))\,{\dl z}{\dl x}
 \end{align*}
 and a direct computation shows that 
 \begin{align*}
     &- \int_{\Om }\int_{Q} h_d(x,z)\cdot (\tilde{m}(x) + w(x,z)\chi^0(z))\,{\dl z}{\dl x}\\
     &\quad = \int_{\R^3}|h_d[(\tilde{m}+\mv{w(x,\cdot)}{Q_0})]|^2\,{\dl x} 
    + \int_{\R^3}\int_{Q_0}|h_d^z[w]|^2\chi_\Om(x)\,{\dl z}{\dl x}.
 \end{align*}
Hence, by \cite[Section 1.2]{Br02} it remains only to prove that
\begin{equation}\label{eq:stiff_limsup_claim}
        \limsup_{k\to \infty} \mathcal{F}^1_k(\tilde{u}_k,\tilde{m}_k)
        \le \mathcal{F}^1(\tilde{u},\tilde{m}) +\delta.
    \end{equation}
As already observed in the proof of the lower bound, we have to be careful in applying Proposition \ref{thm:stiff_hom} because the characteristic function $\chi_k^1$ is not periodic. However, for
\begin{align*}\label{eq:approx_equal}
    \begin{split}
        \mathcal{R}_k
        \coloneqq \frac{1}{2}\int_{\Om_k^1}|\nabla \tilde{m}|^2 \,{\dl x}
        -\int_{(\veps_k E)\cap\Om} |\nabla \tilde{m}|^2 \,{\dl x}
        = \int_{\Om_k^1\setminus \ol{(\veps_k E)\cap \Om}} |\nabla \tilde{m}|^2\,{\dl x} 
    \end{split}
\end{align*}
we see from the $2$-equiintegrability of $(\nabla \tilde{m}_k)_k$ (with \eqref{eq:third_stwos}), as well as from the fact that \(|\Om_k^1\setminus \ol{(\veps_k E)\cap \Om}|\to 0\) as $k\to +\infty$, that $|\mathcal{R}_k|\to 0$ as $k\to +\infty$.
Finally, from \eqref{eq:third_stwos} and Corollary \ref{col:stiff_approx_recov} \eqref{item:replace_stiff_recov}, we infer that
\begin{align*}
    \begin{split}
        \limsup_{k\to \infty} \mathcal{F}^1_k(\tilde{m}_k)
        = \limsup_{k\to \infty}\int_{(\veps_k E)\cap\Om} |\nabla \tilde{m}|^2 \,{\dl x}
        \le \mathcal{F}^1(\tilde{u},\tilde{m}) +\delta.
    \end{split}
\end{align*}
This yields \eqref{eq:stiff_limsup_claim} and concludes the proof of Theorem \ref{thm:main_gamma_conv}. 
\end{proof}

	
\appendix
\section{High-contrast compactness}\label{sec:app_ts}
We provide here a proof of Lemma~\ref{lem:ts_key_lemma}.
\begin{proof}[Proof of Lemma~\ref{lem:ts_key_lemma}]
    We will deliberately switch to subsequences below without relabeling the sequence itself.
    The assumptions together with inequality (\ref{eq:key_est}) entail that \(\sequence{m}{k}\) and \(\sequence{\veps_k\nabla m}{k}\) are bounded in \(L^2(\Om;\R^3)\). Thus, usual two-scale compactness properties (cf. \cite[Proposition~1.14]{Al92}) yield the existence of \(m\in L^2(\Om;W^{1,2}_\per(Q; \R^3))\) such that, up to subsequences,
    \begin{equation}\label{eq:weak_2s_limits}
        m_k \wtwos m,
	\quad \veps _k\nabla m_k \wtwos \nabla_z m
	\qquad \text{ weakly two-scale in } L^2(\Om;\R^3).
    \end{equation}
    Additionally, there exists \(m_1\in L^2(\Om;\R^3)\) such that, up to a subsequence,
    \begin{equation}
    	\chi_k^1m_k
   	\weakly m_1
    	\qquad 
    	\text{ weakly in } L^2(\Om;\R^3).
    \end{equation}
    Now, from \eqref{eq:stwos_char} and Proposition \ref{prop:wts_unfold} we infer that on the one hand
    \begin{equation*}
        \veps_k\chi_k^1\nabla m_k
        \wtwos \chi^1\nabla_z m
        \qquad \text{ weakly two-scale in } L^2(\Om;\R^{3\times 3}).
    \end{equation*}
    On the other hand, since \(\sequence{\chi_{k}^1\nabla m}{k}\) is bounded in \(L^2(\Om;\R)\) by (\ref{eq:key_est}), we also have $$\veps_k\chi_k^1\nabla m_k\to 0 \quad\text{strongly in }L^2(\Om;\R^n).$$ Consequently, we find that \(\nabla_z m(x,z)=0\) on \(\Om\times Q_1\), and in particular that \(m\) is independent of the micro-variable \(z\) in \(Q_1\), i.e., that with Lemma \ref{lem:ts_intermediate}\eqref{item:ts_int_weak} yields
    \begin{equation}\label{eq:weak_2s_limit_stiff}
    	\chi^1(z)m(x,z)
    	= \frac{1}{|Q_1|} \chi^1(z)m_1(x)
   	\qquad \text{ for a.e. } x\in \Om.
    \end{equation} 
    Next, we will show that, up to a subsequence, there holds
    \begin{equation}\label{eq:extension_lim}
    \tilde{m}_k \weakly \frac{m_1}{|Q_1|}
    \qquad \text{ weakly in } W^{1,2}(\Om;\R^3).
    \end{equation}
    This implies that \(m_1\) is actually an element of \(W^{1,2}(\Om;\R^3)\). By virtue of estimate (\ref{eq:key_est}) and the properties of the extensions, we conclude that there exists \(\tilde{m}\in W^{1,2}(\Om;\R^3)\) and a subsequence of \(\sequence{\tilde{m}}{k}\) that converges weakly in \(W^{1,2}(\Om;\R^3)\) and even strongly in \(L^2(\Om;\R^3)\) to the limit \(\tilde{m}\). Because strong \(L^2\)-convergence implies weak two-scale convergence as stated in Lemma \ref{lem:ts_intermediate} \eqref{item:ts_int_weak}, it follows readily that 
    \begin{equation*}
    \chi_k^1m_k=\chi_k^1\tilde{m}_k
    \wtwos \chi^1 \tilde{m}
    \qquad \text{ weakly two-scale in } L^2(\Om;\R^3),
   \end{equation*}
    and then by \eqref{eq:weak_2s_limits}, \eqref{eq:weak_2s_limit_stiff}, and the uniqueness of the two-scale limit we find 
    \begin{equation*}
        \tilde{m}= \frac{m_1}{|Q_1|}.
    \end{equation*}
    Defining
    \begin{equation}
        w(x,z):= m(x,z)-\tilde{m}(x),
    \end{equation}
    we have that \(w\in L^2(\Om;W^{1,2}_0(Q_0;\R^3))\) and we obtain \eqref{eq:decomoposition}. Properties \eqref{eq:elements}, \eqref{eq:ts_convergences}, and \eqref{eq:weak_conv_ext} follow by construction. This concludes the proof.
\end{proof}
\section{\(\Gamma\)-limit of the matrix related energy}
\label{sec:app_4.2}
We present a concise proof of the \(\Gamma\)-limit claimed in Proposition~\ref{thm:stiff_hom}. To that end we fix \(m\in  W^{1,2}(\Om;\sph^2)\). 
        
\noindent{\it Step 1: Lower bound.}
Let \((m_\veps)_\veps\) be a sequence converging in \(W^{1,2}(\Om;\sph^2)\) weakly to \(m\). We first show that
\begin{equation}\label{eq:liminf}
    \mathcal{I}(m)
    \le \liminf_{\veps\to 0}\mathcal{I}_\veps(m_\veps).
\end{equation}
Since the sequence \((m_\veps)_\veps\) is uniformly bounded in \(W^{1,2}(\Om;\sph^2)\), we know by virtue of classical two-scale compactness properties (cf. \cite[Proposition~1.14]{Al92}) that there exists  \(m_1\in L^2(\Om;W^{1,p}_\per(Q;\R^n)/\R)\) such that, up to a subsequence, \(\sequence{\nabla m}{\veps}\) weakly two-scale converges to \(\nabla m(x) + \nabla_z m_1(x,z)\). Moreover, by \cite[Proposition~3.2]{AlDF15} there holds
\begin{equation*}
    m_1(x,z)
    \in T_{m(x)}\sph^2
    \text{ for a.e. } (x,z)\in \Om\times Q.
\end{equation*}
Using additionally that 
\begin{equation*}
    S_\veps(a_{\veps})(x,z)=S_\veps(\chi_{(\veps_k E)\cap\Om})(x,z)
    =\chi_1(z)
\end{equation*}
we conclude from Lemma~\ref{lem:ts_intermediate} \eqref{item:ts_int_weak} immediately that
\begin{align*}
    \liminf_{\veps\to 0} \frac{1}{2}\int_{\Omega}a_{\veps}|\nabla m_{\veps}|^2\, {\dl x}
    &= \liminf_{\veps\to 0} \frac{1}{2}\int_{\Omega}\int_{Q_1}|S_\veps(\nabla m_{\veps})|^2\, {\dl z}{\dl x}\\
    &\ge \frac{1}{2}\int_{\Omega}\int_{Q_1}|\nabla m(x) + \nabla_z m_1(x,z)|^2\, {\dl z}{\dl x}\\
    &\ge \int_\Om\Bigg[\inf_{\varphi \in W^{1,2}_{\per}(Q;T_{m(x)}\sph^2)}
    \frac{1}{2}\int_{Q_1} |\nabla m+\nabla \varphi(z)|^2\, {\dl z}\Bigg]{\dl x}.
\end{align*}

\noindent{\it Step 2: Upper bound.}
For \(\psi_m\in W^{1,2}(\Om; C^\infty_{\per}(Q;\R^3))\), we define the perturbation
\begin{equation}\label{eq:perturbations}
    m_\veps(x)\coloneqq \pi\left(m(x) + \veps \psi_m\left( x,\frac{x}{\veps }\right)\right),
\end{equation}
where \(\pi\) is the customary projection onto \(\sph^2\) (compare \eqref{eq:projection}). The functions \(m_\veps\) are in particular well-defined for \(\veps\) sufficiently small. There holds
\begin{equation}\label{eq:nec_l2_conv}
    m_\veps \to m
    \qquad \text{ strongly in } L^2(\Om;\R^3).
\end{equation} 
Furthermore, owing to the argument in \cite[Theorem 2.3, line (60)]{DaDF20} there also holds 
\begin{equation}\label{eq:stwos_pert}
    \nabla m_\veps
    \stwos \nabla m + \nabla_z\psi_m \nabla \pi(m)
    \qquad \text{ strongly in } L^2(\Om;\R^{3\times 3}).
\end{equation}
We claim now that for every \(\delta>0\) we can chose \(\psi_m=\psi_m^\delta\) such that the corresponding sequence \((m_\veps^\delta)_\veps\) defined in \eqref{eq:perturbations} fulfils
\begin{equation}
    \limsup_{\veps\to 0} \mathcal{I}_\veps(u_\veps^\delta, m_\veps^\delta)
    \le \mathcal{I}(u,m) + \delta.
\end{equation}
By classical $\Gamma$-convergence arguments (cf. \cite[Section 1.2]{Br02}), this would then establish the \(\Gamma-\limsup\) inequality. We first observe that using the strong two-scale convergence in \eqref{eq:stwos_pert} we have, similarly to step~1, 
\begin{equation}\label{eq:limsup_aux}
    \lim_{\veps \to 0}  \frac{1}{2}\int_{\Omega}a_{\veps}|\nabla m_{\veps}|^2\, {\dl x}
    = \frac{1}{2}\int_{\Omega}a_{\veps}|\nabla m + \nabla_z\psi_m \nabla \pi(m)|^2\, {\dl x}.
\end{equation}
Now, because \(C^\infty_{\per}(Q;\R^3)\) is dense in \(W^{1,2}_{\per}(Q;\R^3)\) and the right-hand side of \eqref{eq:limsup_aux} is continuous with respect to the \(W^{1,2}(Q;\R^3)\)-topology, we conclude that there exists \(\psi_m^\delta\in W^{1,2}(\Om; C^\infty_{\per}(Q;\R^3))\) such that
\begin{equation*}
    \lim_{\veps \to 0}  \frac{1}{2}\int_{\Omega}a_{\veps}|\nabla m_{\veps}|^2\, {\dl x}
    \le \int_\Om\Bigg[\inf_{\varphi \in W^{1,2}_{\per}(Q;T_{m(x)}\sph^2)}
    \frac{1}{2}\int_{Q_1} |\nabla m+\nabla \varphi(z)|^2\, {\dl z}\Bigg]{\dl x} +\delta,
\end{equation*}
where we used additionally that \(\nabla \varphi\nabla \pi(m)=\nabla \varphi\) for all \(\varphi \in W^{1,2}_{\per}(Q;T_m\sph^2)\).
This concludes the proof.


\section*{Acknowledgements}
E.D. acknowledges support
from the Austrian Science Fund (FWF) projects
\href{https://doi.org/10.55776/F65}{10.55776/F65},  \href{https://doi.org/10.55776/Y1292}{10.55776/Y1292}, and \href{https://doi.org/10.55776/P35359}{10.55776/P35359}, as well as
from the OeAD-WTZ project CZ04/2019 (M\v{S}MT\v{C}R 8J19AT013). The research of L.H. has been funded from the Austrian Science Fund (FWF) through project  \href{https://doi.org/10.55776/P34609}{10.55776/P34609}. L.H. also thanks TU Wien and MedUni Wien for their support and hospitality. Both authors are thankful to C. Gavioli and V. Pagliari for some interesting discussions on the topic of high-contrast materials.


\bibliography{master.bib}
\bibliographystyle{abbrvurl}

\end{document}